\documentclass{article}
\makeatletter
\renewcommand*{\@fnsymbol}[1]{\ensuremath{\ifcase#1\or *\or 1\or 2\or
 3\or 4\or 5\or 6\or 7 \or 8 \else\@ctrerr\fi}}
\makeatother
\usepackage{amsmath,amsthm,amssymb}
\usepackage{upgreek}
\usepackage{cite,yhmath}
\usepackage{enumerate}
\newtheorem{theorem}{Theorem}[section]
\newtheorem{corollary}[theorem]{Corollary}
\newtheorem{lemma}[theorem]{Lemma}
\newtheorem{proposition}[theorem]{Proposition}
\theoremstyle{definition}

\newtheorem{example}[theorem]{Example}

\theoremstyle{remark}
\newtheorem{remark}[theorem]{Remark}

\allowdisplaybreaks[3]
\numberwithin{equation}{section}
\DeclareMathOperator*\esssup{{\mathrm{ess\,sup}}}
\newcommand{\1}{1\!\!1}
\newcommand{\Bbs}{B_{\mathrm{bs}}(\Ga_0)}
\newcommand{\Bb}{\B_{\mathrm{b}}(\X)}
\newcommand{\B}{\mathcal{B}}
\newcommand{\Dom}{\mathrm{Dom}}
\newcommand{\D}{\mathcal{D}}
\newcommand{\Fcyl}{\F_{\mathrm{cyl}}(\Ga)}
\newcommand{\Ffin}{\F_0(\Ga)}
\newcommand{\F}{{\mathcal F}}
\newcommand{\Ga}{\Gamma}
\newcommand{\KK}{\overline{\K_\aC}}
\newcommand{\Ka}{\KK}
\newcommand{\K}{\mathcal{K}}
\newcommand{\La}{\Lambda}
\newcommand{\Mfm}{\M^1_\mathrm{fm}(\Ga)}
\newcommand{\M}{\mathcal{M}}
\newcommand{\N}{{\mathbb N}}
\newcommand{\R}{{\mathbb R}}
\newcommand{\Sect}{\mathrm{Sect}}
\newcommand{\X}{{\R^d}}
\newcommand{\Z}{\mathbb{Z}}
\newcommand{\aC}{{\a C}}

\newcommand{\eps}{\varepsilon}

\newcommand{\ga}{\gamma}
\newcommand{\goto}{\rightarrow}

\newcommand{\hL}{{\widehat{L}}}
\newcommand{\hP}{{\widehat{P}}}

\newcommand{\hT}{{\widehat{T}}}
\newcommand{\inv}{\mathrm{inv}}

\newcommand{\la}{\lambda}
\newcommand{\lluu}{\lu\!\!\lu}

\newcommand{\lu}{\left\langle}

\newcommand{\n}{{(n)}}
\newcommand{\en}{{|\eta|}}

\newcommand{\rruu}{\ru\!\!\ru}

\newcommand{\ru}{\right\rangle}

\newcommand{\xn}{x_1,\ldots,x_n}
\renewcommand\L{{\mathcal L}}
\renewcommand\a{{\alpha}}
\renewcommand{\Im}{\mathrm{Im\,}}
\renewcommand{\L}{\mathcal{L}}
\renewcommand{\Re}{\mathrm{Re\,}}

\begin{document}

\title{Statistical dynamics of continuous systems: perturbative and approximative approaches\thanks{This work was financially supported by the DFG through SFB 701: ``Spektrale
Strukturen und Topologische Methoden in der Mathematik"}}
\author{Dmitri Finkelshtein\thanks{Department of Mathematics,
Swansea University, Singleton Park, Swansea SA2 8PP, U.K. ({\tt d.l.finkelshtein@swansea.ac.uk}).} \and Yuri Kondratiev\thanks{Fakult\"{a}t
f\"{u}r Mathematik, Universit\"{a}t Bielefeld, 33615 Bielefeld,
Germany ({\tt kondrat@math.uni-bielefeld.de}).} \and Oleksandr
Kutoviy\thanks{Department of Mathematics, Massachusetts Institute of Technology,
77 Massachusetts Avenue E18-420, Cambridge, MA, USA ({\tt
kutovyi@mit.edu}); Fakult\"{a}t f\"{u}r Mathematik, Universit\"{a}t
Bielefeld, 33615 Bielefeld, Germany ({\tt
kutoviy@math.uni-bielefeld.de}).}}
\maketitle

\begin{abstract}
We discuss general concept of Markov statistical dynamics in the continuum.  For a class of spatial birth-and-death models, we develope a perturbative technique for the construction of statistical dynamics. Particular examples of such systems are
considered. For the case of Glauber type dynamics in the continuum we describe a Markov chain approximation approach that gives more detailed information about statistical evolution in this model.

{\textbf{AMS Subject Classification (2010): } 46E30, 82C21, 47D06}

{\textbf{Keywords:} $C_0$-semigroups \and continuous systems \and
Markov evolution \and spatial birth-and-death dynamics \and correlation
functions \and evolution equations}
\end{abstract}

\section{Introduction}
Dynamics of interacting particle systems appear  in several areas of the complex systems theory. In particular, we observe a growing activity in the study of Markov dynamics for continuous systems. The latter fact is motivated, in particular, by modern problems of mathematical physics, ecology, mathematical biology, and genetics, see e.g. \cite{FKO2009,FKK2010b,FKKL2011a,FKKL2011b,FKK2010,FKKZ2010,FKO2011a,FKO2011b,FKK2010c,FKO2011c,OFKKBC2013,KKM2008,KKP2008,KKZ2006}
and literature cited therein. Moreover, Markov dynamics are used for the construction of social, economic and demographic models.  Note that Markov processes for continuous systems are considering in the stochastic analysis as dynamical point processes
\cite{HS1978,GK2008,GK2006} and they appear even
in the representation theory of big groups \cite{BO2005,BO2005b,BO2000,BO1998,BO2006}.

A mathematical formalization  of the problem may be described as the following.  As a phase space of the system we use the space $\Gamma(\R^d)$ of locally finite configurations in the Euclidean space $\R^d$. An heuristic Markov generator which describes considered model is given by its expression on a proper set of functions (observables) over $\Gamma(\R^d)$. With this operator we can relate two evolution equations. Namely, Kolmogorov backward
equation for observables and Kolmogorov forward equation on probability measures on the phase space $\Gamma(\R^d)$ (macroscopic states of the system). The latter equation is a.k.a. Fokker--Planck equation in the mathematical physics terminology. Comparing with the usual situation in the stochastic analysis, there is an essential technical difficulty: corresponding Markov process in the configuration space may be constructed only
 in very special particular cases. As a result, a description of Markov dynamics in terms of random trajectories is absent for most of models under considerations.

 As an alternative approach we use a concept of the statistical dynamics that substitutes
 the notion of a Markov stochastic process.  A central object now is an evolution of states of the system that shall be defined by mean of the Fokker--Planck equation. This evolution equation w.r.t. probability measures on  $\Gamma(\R^d)$ may be reformulated as a hierarchical chain of  equations for correlation functions of considered measures.
 Such kind of evolution equations are well known in the study of Hamiltonian dynamics for
 classical gases as BBGKY chains but now they appear as a  tool for construction and analysis of  Markov dynamics. As an essential technical step, we consider related pre-dual evolution  chains of equations on the so-called quasi-observables. As it will be shown in the paper, such hierarchical equations may be analyzed in the framework
 of semigroup theory with the use of powerful techniques of perturbation theory for the semigroup generators etc.
 Considering the dual evolution for the constructed semigroup on quasi-observables we introduce then the dynamics on
 correlation functions.
 Described scheme of  the dynamics construction looks quite surprising because any perturbation  techniques for initial Kolmogorov evolution equations one can not expect. The point is that states of infinite interacting particle systems are given by measures which are, in general, mutually orthogonal. As a result,
 we can not compare their evolutions or apply a perturbative approach. But under quite general assumptions they have correlation functions and corresponding dynamics may be considered in a common  Banach space of correlation functions.
Proper choice of this Banach space means, in fact, that we find an admissible class of initial states for which the statistical dynamics may be constructed. There  we see again a crucial difference comparing with Markov stochastic processes framework
where the initial distribution evolution is defined for any initial date.

 The structure of the paper is following.  In Section 2 we discuss general concept of statistical dynamics for Markov evolutions in the continuum and introduce necessary mathematical structures. Then, in Section 3,  this concept is applied to an important class of Markov dynamics of continuous systems, namely, to birth-and-death models. Here
 general conditions for the existence of a semigroup evolution in a space of quasi-observables are obtained.
 Then we construct evolutions of correlation functions as dual objects. It is shown how to apply general results to the study of particular models of statistical dynamics coming from mathematical physics and ecology.

 Finally, in Section 4 we  describe an alternative techniques for the construction of solutions to hierarchical chains evolution
 equations by means of an approximative approach.  For concreteness, this approach is discussed in the
 case of the
 so-called Glauber type dynamics in the continuum. We construct a family of Markov chains on configuration space in finite volumes with concrete transition kernels adopted to the Glauber dynamics. Then the solution to the hierarchical equation for correlation functions may be obtained as the limit of the corresponding object for the Markov chain dynamics. This limiting evolution generates the state dynamics. Moreover,  in the uniqueness regime for the corresponding equilibrium measure of Glauber dynamics which is, in fact, Gibbs, dynamics of correlation functions is exponentially ergodic.

 This paper is based on a series of our previous works \cite{FKK2009,FKK2010,FKK2011a,FKKZ2010,FKK2011c, KKZ2006}  but certain results and constructions are detailed  and generalized, in particular, in more complete analysis of the dual dynamics on correlation functions.

\section{Statistical description for stochastic dynamics of~complex~systems~in~the~continuum}

\subsection{Complex systems in the~continuum}
In recent decades, different brunches of natural and life sciences have been addressing to a unifying point of view on a number of phenomena occurring in systems composed of interacting subunits. This leads to formation of an~interdisciplinary science which is
referred to as the theory of complex systems. It provides reciprocation of concepts and tools involving wide spectrum of applications as well as various mathematical theories such that statistical mechanics, probability, nonlinear dynamics, chaos theory, numerical simulation and many others.

Nowadays complex systems theory is a quickly growing interdisciplinary area with a very broad spectrum of motivations and applications. For instance, having in mind biological applications, S.~Levin \cite{Lev2002} characterized complex adaptive systems by such properties as diversity and individuality of components, localized interactions among components, and the outcomes of interactions used for replication or enhancement of components.
We will use a more general informal description of a complex system as a specific collection of interacting elements which has so-called collective behavior that means appearance of system properties which are not peculiar to inner nature of each element itself.
The significant physical example of such properties is thermodynamical effects which were a~background for creation by L.\,Boltzmann of statistical physics as a~mathematical language for studying complex systems of molecules.

We assume that all elements of a complex system are identical by properties and possibilities. Thus, one can model these elements as points in a proper space whereas the complex system will be modeled as a discrete set in this space. Mathematically this means that for the study of complex systems the proper language and techniques are delivered by the interacting particle models which form a rich and powerful direction in modern stochastic and infinite dimensional analysis. Interacting particle systems have a wide use as models in condensed matter physics, chemical kinetics, population biology, ecology (individual based models), sociology and economics (agent based models). For instance a population in biology or ecology may be represented by a configuration of organisms located in a proper habitat.

In spite of completely different orders of numbers of elements in real physical, biological, social, and other systems (typical numbers start from $10^{23}$ for molecules and, say, $10^5$~for plants) their complexities have analogous phenomena and need similar mathematical methods. One of them consists in mathematical approximation of a huge but finite real-world system by an infinite system realized in an infinite space. This approach was successfully approved to the thermodynamic limit for models of statistical physics and appeared quite useful for the ecological modeling in the infinite habitat to avoid boundary effects in a population evolution.

Therefore, our phase space for the mathematical description should consist of countable sets from an underlying space. This space itself may have discrete or continuous nature that leads to segregation of the world of complex systems on two big classes. Discrete models correspond to systems whose elements can occupy some prescribing countable set of positions, for example, vertices of the lattice $\Z^d$ or, more generally, of some graph embedded to $\R^d$. These models are widely studied and the corresponding theories were realized in numerous publications, see e.g. \cite{Lig1985,Lig1999} and the~references therein. Continuous models, or models in the~continuum, were studied not so intensively and broadly. We concentrate our attention exactly on continuous models of systems whose elements may occupy any points in Eucledian space $\R^d$. (Note that the most part of our results may be easily transferred to much more general underlying spaces.) Having in mind that real elements have physical sizes we will consider only the so-called locally finite subsets of the underlying space $\R^d$, that means that in any bounded region we assume to have a finite number of the elements. Another our restriction will be prohibition of multiple elements in the same position of the space.

We will consider systems of elements of the same type only. The mathematical realization of considered approaches may be successfully extended to multi-type systems, meanwhile such systems will have more rich qualitative properties and will be an object of interest for applications. Some particular results can be found e.g. in \cite{Fin2009,FFK2008,FKO2011c}.

\subsection{Mathematical description for a complex systems}

We proceed to the mathematical realization of complex systems.

Let ${\B}({\X})$ be the family of all Borel sets in ${\X}$, $d\geq
1$; ${\B}_{\mathrm{b}} ({\X})$ denotes the system of all bounded
sets from ${\B}({\X})$.

The configuration space over space $\X$ consists of all locally
finite subsets (configurations) of $\X$. Namely,
\begin{equation*} \label{confspace}
\Ga =\Ga\bigl(\X\bigr) :=\Bigl\{ \ga \subset \X \Bigm| |\ga _\La
|<\infty, \ \mathrm{for \ all } \ \La \in {\B}_{\mathrm{b}}
(\X)\Bigr\}.
\end{equation*}
Here $|\cdot|$ means the cardinality of a~set, and
$\ga_\La:=\ga\cap\La$. We may identify each
$\ga\in\Ga$ with the non-negative Radon measure $\sum_{x\in
\gamma }\delta_x\in \M(\X)$, where $\delta_x$ is
the Dirac measure with unit mass at $x$, $\sum_{x\in\emptyset}\delta_x$ is, by definition, the zero measure, and $\M(\X)$ denotes the space of all
non-negative Radon measures on $\B(\X)$. This identification allows to endow
$\Ga$ with the topology induced by the vague topology on
$\M(\X)$, i.e. the weakest topology on $\Ga$
with respect to which all mappings
\begin{equation}\label{gentop}
    \Ga\ni\ga\mapsto \sum_{x\in\ga} f(x)\in{\R}
\end{equation}
are continuous for any $f\in C_0(\X)$ that is the set of all continuous functions on $\X$ with compact supports.  It is worth noting
the vague topology may be metrizable in such a way that $\Ga$ becomes a~Polish space (see e.g. \cite{KK2006} and references therein).

Corresponding to the vague topology the Borel $\sigma $-algebra $\B(\Ga )$ appears the smallest
$\sigma $-algebra for which all mappings
\begin{equation}\label{gensalg}
    \Ga \ni \ga \mapsto N_\La(\ga):=|\ga_
\La |\in{ \N}_0:={\N}\cup\{0\}
\end{equation}
are measurable for any $\La\in{
\B}_{\mathrm{b}}(\X)$, see e.g. \cite{AKR1998a}. This $\sigma$-algebra may be generated by the~sets
\begin{equation}\label{QLa}
    Q(\La,n):=\bigl\{\ga\in\Ga \bigm| N_\La(\ga)=|\ga_\La|=n\bigr\}, \qquad \La\in\Bb, n\in\N_0.
\end{equation}
Clearly, for any $\La\in\Bb$,
\begin{equation*}\label{unionCLa}
    \Ga=\bigsqcup_{n\in\N_0} Q(\La,n).
\end{equation*}

Among all measurable functions $F:\Ga\to\bar{\R}:=\R\cup\{\infty\}$ we mark out the set $\Ffin$ consisting of such of them for which $|F(\ga)|<\infty$ at least for all $|\ga|<\infty$. The important subset of $\Ffin$ formed by cylindric functions on $\Ga$. Any such a function is characterized by a set $\La\in\Bb$ such that $F(\ga)=F(\ga_\La)$ for all $\ga\in\Ga$. The class of cylindric functions we denote by $\Fcyl\subset\Ffin$.

Functions on $\Ga$ are usually called {\em observables}. This notion is borrowed from statistical physics and means that typically in course of empirical investigation we may estimate, check, see only some quantities of a whole system rather then look on the system itself.

\begin{example}
  Let $\varphi:\X\to\R$ and consider the so-called {\em linear function} on $\Ga$, cf. \eqref{gentop},
  \begin{equation*}\label{linfunc}
      \langle \varphi,\ga \rangle :=
        \begin{cases}
            \displaystyle\sum_{x\in\ga}\varphi(x), & \displaystyle \text{if} \ \sum_{x\in\ga}|\varphi(x)|<\infty, \quad \ga\in\Ga,\\
            \displaystyle+\infty, &\text{otherwise}.
        \end{cases}
  \end{equation*}
  Then, evidently, $\langle \varphi,\cdot \rangle\in\Ffin$. If, additionally, $\varphi\in C_0(\X)$ then $\langle \varphi,\cdot \rangle\in\Fcyl$. Not that for e.g. $\varphi(x)=\|x\|_{\X}$ (the Euclidean norm in $\X$) we have that $\langle \varphi,\ga \rangle=\infty$ for any infinite $\ga\in\Ga$.
\end{example}

\begin{example}\label{ex:energy}
  Let $\phi:\X\setminus\{0\}\to \R$ be an even function, namely, $\phi(-x)=\phi(x)$, $x\in\X$. Then one can consider the so-called {\em energy function}
  \begin{equation}\label{energy}
    E^\phi(\ga):=
        \begin{cases}
            \displaystyle\sum_{\{x,y\}\subset\ga} \phi(x-y), & \displaystyle\text{if} \ \sum_{\{x,y\}\subset\ga} |\phi(x-y)|<\infty,\quad \ga\in\Ga,\\
            \displaystyle+\infty, &\text{otherwise}.
        \end{cases}
  \end{equation}
  Clearly, $E^\phi\in\Ffin$. However, even for $\phi$ with a compact support, $E^\phi$ will not be a~cylindric function.
\end{example}

As we discussed before, any configuration $\ga$ represents some system of elements in a real-world application. Typically, investigators are not able to take into account exact positions of all elements due to huge number of them. For quantitative and qualitative analysis of a system researchers mostly need some its statistical characteristics such as density, correlations, spatial structures and so on. This leads to the so-called statistical description of complex systems when people study distributions of countable sets in an underlying space instead of sets themselves. Moreover, the main idea in Boltzmann's approach to thermodynamics based on giving up the description in terms of evolution for groups of molecules and using statistical interpretation of molecules motion laws. Therefore, the crucial role for studying of complex systems plays distributions (probability measures) on the space of configurations. In statistical physics these measures usually called {\em states} that accentuates their role for description of systems under consideration.

We denote the class of all probability measures on $\bigl(\Ga,\B(\Ga)\bigr)$ by $\M^1(\Ga)$. Given a distribution $\mu\in\M^1(\Ga)$ one can consider a collection of random variables $N_\La(\cdot)$, $\La\in\Bb$ defined in \eqref{gensalg}. They describe random numbers of elements inside bounded regions. The natural assumption is that these random variables should have finite moments. Thus, we consider the class $\Mfm$ of all measures from $\M^1(\Ga)$ such that
\begin{equation}\label{Mfm}
    \int_\Ga |\ga_\La|^n \,d\mu(\ga)<\infty, \qquad \La\in\Bb, n\in\N.
\end{equation}

\begin{example}\label{ex:Poisson}
  Let $\sigma$ be a non-atomic Radon measure on $\bigl(\X,\B(\X)\bigr)$. Then the {\em Poisson measure} $\pi_\sigma$ with intensity measure $\sigma$ is defined on $\B(\Ga)$ by
  \begin{equation}\label{Poisson}
    \pi_\sigma \bigl(Q(\La,n)\bigr)=\frac{\bigl(\sigma(\La)\bigr)^n}{n!}\exp\bigl\{ -\sigma(\La)\bigr\}, \qquad \La\in\Bb, n\in\N_0.
  \end{equation}
This formula is nothing but the statement that the random variables $N_\La$ have Poissonian distribution with mean value $\sigma(\La)$, $\La\in\Bb$.  Note that by the R\'{e}nyi theorem \cite{Ren1970,Ken1974} a measure $\pi_\sigma$ will be Poissonian if \eqref{Poisson} holds for $n=0$ only.
  In the case then $d\sigma(x)=\rho(x)\,dx$ one can say about nonhomogeneous Poisson measure $\pi_\rho$ with density (or intensity) $\rho$. This notion goes back to the famous Campbell formula \cite{Cam1910,Cam1909} which states that
  \begin{equation}\label{Campbell}
      \int_\Ga \langle \varphi,\ga\rangle \,d\pi_\rho(\ga) = \int_\X \varphi(x) \rho(x) \,dx,
  \end{equation}
  if only the right hand side of \eqref{Campbell} is well-defined. The generalization of \eqref{Campbell} is the Mecke identity \cite{Mec1968}
  \begin{equation}\label{Mecke}
    \int_\Ga\sum_{x\in\ga} h(x,\ga)\,d\pi_\sigma(\ga) = \int_\Ga\int_\X
    h(x,\ga\cup x) \,d\sigma(x)\,d\pi_\sigma(\ga),
  \end{equation}
  which holds for all measurable nonnegative functions $h:\X\times\Ga\to\R$. Here and in the sequel we will omit brackets for the one-point set $\{x\}$. In \cite{Mec1968}, it was shown that the Mecke identity is a characterization identity for the Poisson measure. In the case $\rho(x)= z>0$, $x\in\X$ one can say about the homogeneous Poisson distribution (measure) $\pi_z$ with constant intensity $z$. We will omit sub-index for the case $z=1$, namely, $\pi:=\pi_1=\pi_{dx}$. Note that the property \eqref{Mfm} is followed from \eqref{Mecke} easily.
\end{example}

\begin{example}\label{ex:Gibbs}
  Let $\phi$ be as in Example \ref{ex:energy} and suppose that the energy given by \eqref{energy} is {\em stable}: there exists $B\geq0$ such that, for any $|\ga|<\infty$,  $E^\phi(\ga)\geq - B |\ga|$. An example of such $\phi$ my be given by the expansion
  \begin{equation}\label{stpot}
    \phi(x)=\phi^+(x)+\phi^p(x), \quad x\in\X,
  \end{equation}
  where $\phi^+\geq0$ whereas $\phi^p$ is a positive defined function on $\X$ (the Fourier transform of a measure on $\X$), see e.g. \cite{Rue1969, FR1966}. Fix any $z>0$ and define the {\em Gibbs measure} $\mu\in\M^1(\Ga)$ with potential $\phi$ and activity parameter $z$ as a measure which satisfies the following generalization of the Mecke identity:
  \begin{equation}\label{GNZ}
    \int_\Ga\sum_{x\in\ga} h(x,\ga)\,d\mu(\ga) = \int_\Ga\int_\X
    h(x,\ga\cup x) \exp\{-E^\phi(x,\ga)\} \, z dx\,d\mu(\ga),
  \end{equation}
  where
  \begin{equation}\label{localenergy}
    E^\phi(x,\ga):=\langle\phi(x-\cdot),\ga\rangle=\sum_{y\in\ga}\phi(x-y), \quad \ga\in\Ga, x\in\X\setminus\ga.
  \end{equation}
  The identity \eqref{GNZ} is called the Georgii--Nguyen--Zessin identity, see  \cite{Geo1976,NZ1979}. If potential $\phi$ is additionally satisfied the so-called integrability condition
  \begin{equation}\label{integrability}
    \beta:=\int_\X \bigl| e^{-\phi(x)}-1\bigr|\, dx <  \infty,
  \end{equation}
  then it can checked that the condition \eqref{Mfm} for the Gibbs measure holds. Note that under conditions $z\beta\leq (2e)^{-1}$ there exists a unique measure on $\bigl(\Ga, \B(\Ga)\bigr)$ which satisfies \eqref{GNZ}. Heuristically, the measure $\mu$ may be given by the formula
  \begin{equation}\label{Gibbsdensity}
    d\mu(\ga)=\frac{1}{Z} e^{-E^\phi(\ga)}\,d\pi_z(\ga),
  \end{equation}
  where $Z$ is a normalizing factor. To give rigorous meaning for \eqref{Gibbsdensity} it is possible to use the so-called DLR-approach (named after R.\,L.\,Dobrushin, O.\,Lanford, D.\,Ruelle), see e.g. \cite{AKR1998} and references therein. As was shown in \cite{NZ1979}, this approach gives the equivalent definition of the Gibbs measures which satisfies \eqref{GNZ}.
\end{example}

Note that \eqref{Gibbsdensity} could have a rigorous sense if we restrict our attention on the space of configuration which belong to a bounded domain $\La\in\Bb$. The space of such (finite) configurations will be denoted by $\Ga(\La)$. The $\sigma$-algebra $\B(\Ga(\La))$ may be generated by family of mappings $\Ga(\La)\ni\ga\mapsto N_{\La'}(\ga)\in\N_0$, $\La'\in\Bb$, $\La'\subset\La$. A measure $\mu\in\Mfm$ is called
locally absolutely continuous with respect to the Poisson measure
$\pi$ if for any $\La\in\Bb$ the projection of
$\mu$ onto $\Ga(\La)$ is absolutely continuous with respect to (w.r.t.) the
projection of $ \pi$ onto $\Ga(\La)$. More precisely, if we consider the projection mapping $p_\La:\Ga\to\Ga(\La)$, $p_\La(\ga):=\ga_\La$ then $\mu^\La:=\mu\circ p_\La^{-1}$ is absolutely continuous w.r.t. $\pi_\La:=\pi\circ p_\La^{-1}$.

\begin{remark}
Having in mind \eqref{Gibbsdensity}, it is possible to derive from \eqref{GNZ} that the Gibbs measure from Example~\ref{ex:Gibbs} is locally absolutely continuous w.r.t. the Poisson measure, see e.g. \cite{FK2005} for the more general case.
\end{remark}

By e.g. \cite{KK2002}, for any $\mu\in\Mfm$ which is locally absolutely continuous w.r.t the Poisson measure there exists the family of (symmetric) {\em correlation functions} $k_\mu^\n:(\X)^n\to\R_+:=[0,\infty)$ which defined as follows. For any symmetric function $f^\n:(\X)^n\to\R$ with a finite support the following equality holds
\begin{multline}\label{cfdef}
    \int_\Ga \sum_{\{\xn\}\subset\ga} f^\n(\xn) \,d\mu(\ga)
    \\=\frac{1}{n!}\int_{(\X)^n} f^\n(\xn)k_\mu^\n(\xn)\, dx_1\ldots dx_n
\end{multline}
for $n\in\N$, and $k_\mu^{(0)}:=1$.

The meaning of the notion of correlation functions is the following: the correlation function $k_\mu^\n(\xn)$ describes the non-normalized density of probability to have points of our systems in the positions $\xn$.
%

\begin{remark}
Iterating the Mecke identity \eqref{Mecke}, it can be easily shown that
\begin{equation}\label{cfofpirho}
  k_{\pi_\rho}^\n(\xn)=\prod\limits_{i=1}^n\rho(x_i),
\end{equation}
in particular,
\begin{equation}\label{cfofpiz}
  k_{\pi_z}^\n(\xn)\equiv z^n.
\end{equation}
\end{remark}

\begin{remark}
  Note that if potential $\phi$ from Example~\ref{ex:Gibbs} satisfies to \eqref{stpot}, \eqref{integrability} then, by \cite{Rue1970}, there exists $C=C(z,\phi)>0$ such that for $\mu$ defined by~\eqref{GNZ}
  \begin{equation}\label{RB}
    k_\mu^\n(\xn)\leq C^n, \qquad \xn\in\X.
  \end{equation}
  The inequality \eqref{RB} is referred to as the Ruelle bound.
\end{remark}

We dealt with symmetric function of $n$ variables from $\X$, hence, they can be considered as functions on $n$-point subsets from $\X$. We proceed now to the exact constructions.

The space of $n$-point configurations in $Y\in\B(\X)$ is defined by
\begin{equation*}
\Ga^{(n)}(Y):=\bigl\{ \eta \subset Y \bigm| |\eta |=n\bigr\}, \qquad
n\in { \N}.
\end{equation*}
We put $\Ga^{(0)}(Y):=\{\emptyset\}$. As a~set, $\Ga^{(n)}(Y)$ may
be identified with the symmetrization of
\[
\widetilde{Y^n} = \bigl\{
(x_1,\ldots ,x_n)\in Y^n \bigm| x_k\neq x_l \ \mathrm{if} \ k\neq
l\bigr\}.
\]
Hence, one can introduce the corresponding Borel $\sigma$-algebra, which we denote by $\B\bigl(\Ga^{(n)}(Y)\bigr)$. The
space of finite configurations in $Y\in\B(\X)$ is defined as
\begin{equation}\label{Ga0}
\Ga_0(Y):=\bigsqcup_{n\in {\N}_0}\Ga^{(n)}(Y).
\end{equation}
This space is equipped with the topology of the disjoint union. Let
$\B \bigl(\Ga_0(Y)\bigr)$ denote the corresponding Borel $\sigma
$-algebra. In the case of $Y=\X$ we will omit the index $Y$ in the
previously defined notations. Namely,
\begin{equation}\label{speccases}
    \Ga_0:=\Ga_{0}(\X), \qquad
\Ga^{(n)}:=\Ga^{(n)}(\X), \quad n\in\N_0.
\end{equation}

The restriction of the Lebesgue product measure $(dx)^n$ to
$\bigl(\Ga^{(n)}, \B(\Ga^{(n)})\bigr)$ we denote by $m^{(n)}$. We
set $m^{(0)}:=\delta_{\{\emptyset\}}$. The Lebesgue--Poisson measure
$\la $ on $\Ga_0$ is defined by
\begin{equation} \label{LP-meas-def}
\la :=\sum_{n=0}^\infty \frac {1}{n!}m^{(n)}.
\end{equation}
For any $\La\in\B_{\mathrm{b}}(\X)$ the restriction of $\la$ to $\Ga_{0}(\La)=\Ga
(\La)$ will be also denoted by $\la $.

\begin{remark}
The space
$\bigl( \Ga, \B(\Ga)\bigr)$ is the projective limit of the family of measurable
spaces $\bigl\{\bigl( \Ga(\La), \B(\Ga(\La))\bigr)\bigr\}_{\La \in
\Bb}$. The Poisson measure $\pi$ on $\bigl(\Ga
,\B(\Ga )\bigr)$ from Example~\ref{ex:Poisson} may be defined as the projective limit of the family of
measures $\{\pi^\La \}_{\La \in \Bb}$, where $
\pi^\La:=e^{-m(\La)}\la $ is the probability measure on $\bigl(
\Ga(\La), \B(\Ga(\La))\bigr)$ and $m(\La)$ is the Lebesgue measure
of $\La\in \Bb$ (see e.g. \cite{AKR1998a} for
details).
\end{remark}

Functions on $\Ga_0$ will be called {\em quasi-observables}.
Any $\B(\Ga_0)$-measurable function $G$ on $ \Ga_0$, in
fact, is defined by a~sequence of functions
$\bigl\{G^{(n)}\bigr\}_{n\in{ \N}_0}$ where $G^{(n)}$ is a
$\B(\Ga^{(n)})$-measurable function on $\Ga^{(n)}$. We preserve the same notation for the function $G^{(n)}$ considered as a symmetric function on $(\X)^n$. Note that $G^{(0)}\in\R$.

A set $M\in \B (\Ga_0)$ is called bounded if there exists $ \La \in
\Bb$ and $N\in { \N}$ such that
\begin{equation*}\label{bddinGa0}
    M\subset \bigsqcup_{n=0}^N\Ga^{(n)}(\La).
\end{equation*}
The set of bounded measurable functions on $\Ga_0$ with bounded support we denote by
$\Bbs$, i.e., $G\in \Bbs$ iff $G\upharpoonright_{\Ga_0\setminus M}=0$ for some bounded $M\in {\B}(\Ga_0)$. For any $G\in\Bbs$ the functions $G^\n$ have finite supports in $(\X)^n$ and may be substituted into \eqref{cfdef}. But, additionally, the sequence of $G^\n$ vanishes for big $n$. Therefore, one can summarize equalities \eqref{cfdef} by $n\in\N_0$. This leads to the following definition.

Let $G\in\Bbs$, then we define the function $KG:\Ga\to\R$ such that:
\begin{align}
(KG)(\ga )&:=\sum_{\eta \Subset \ga }G(\eta )\label{K-transform}\\&\hphantom{:}=G^{(0)}+\sum_{n=1}^\infty\sum_{\{\xn\}\subset\ga}G^\n(\xn), \quad \ga \in \Ga,\notag
\end{align}
see e.g. \cite{KK2002,Len1975,Len1975a}. The summation in \eqref{K-transform} is
taken over all finite subconfigurations $\eta\in\Ga_0$ of the
(infinite) configuration $\ga\in\Ga$; we denote this by the symbol,
$\eta\Subset\ga $. The mapping $K$ is linear, positivity preserving,
and invertible, with
\begin{equation}
(K^{-1}F)(\eta ):=\sum_{\xi \subset \eta }(-1)^{|\eta \setminus \xi
|}F(\xi ),\quad \eta \in \Ga_0. \label{k-1transform}
\end{equation}
By \cite{KK2002}, for any $G\in\Bbs$, $KG\in\Fcyl$, moreover, there exists $C=C(G)>0$,
$\La=\La(G)\in\Bb$, and $N=N(G)\in\N$ such that
\begin{equation}\label{estBbs}
    |KG(\ga)|\leq C \bigl( 1+ |\ga_\La|\bigr)^N, \quad \ga\in\Ga.
\end{equation}

The expression \eqref{K-transform} can be extended to the class of all nonnegative measurable $G:\Ga_0\to\R_+$, in this case, evidently, $KG\in\Ffin$. Stress that the left hand side (l.h.s.) of \eqref{k-1transform} has a meaning for any $F\in\Ffin$, moreover, in this case $(KK^{-1}F)(\ga)=F(\ga)$ for any $\ga\in\Ga_0$.

For $G$ as above we may summarize \eqref{cfdef} by $n$ and rewrite the result in a compact form:
\begin{equation}\label{eqmeans}
\int_\Ga (KG)(\ga) d\mu(\ga)=\int_{\Ga_0}G(\eta)
k_\mu(\eta)d\la(\eta).
\end{equation}
As was shown in \cite{KK2002}, the equality \eqref{K-transform} may be extended on all functions $G$ such that the l.h.s. of \eqref{eqmeans} is finite. In this case \eqref{K-transform} holds for $\mu$-a.a. $\ga\in\Ga$ and \eqref{eqmeans} holds too.

\begin{remark}
The equality \eqref{eqmeans} may be considered as definition of the correlation function $k_\mu$. In fact, the definition of correlation functions in statistical physics, given by N.\,N.\,Bogolyubov in \cite{Bog1962}, based on a~similar relation. More precisely, consider for a~$\B(\X)$-measurable
function $f$ the so-called coherent state, given as a function on $\Ga_0$ by
\begin{equation}\label{Leb-Pois-exp}
e_\la (f,\eta ):=\prod_{x\in \eta }f(x) ,\ \eta \in \Ga
_0\!\setminus\!\{\emptyset\},\qquad e_\la (f,\emptyset ):=1.
\end{equation}
Then for any $f\in C_0(\X)$ we have the point-wise equality
\begin{equation}\label{Kexp}
\bigl(Ke_\la (f)\bigr)(\ga)=\prod_{x\in\ga}\bigl(1+f(x)\bigr), \quad \eta\in\Ga_0.
\end{equation}
As a result, the correlation functions of different orders may be considered as
kernels of a Taylor-type expansion
\begin{align}\notag
    \int_\Ga \prod_{x\in\ga}\bigl(1+f(x)\bigr) \, d\mu(\ga) &=
    1 + \sum_{n=1}^\infty \frac{1}{n!} \int_{(\X)^n} \prod_{i=1}^n f(x_i) k_\mu^\n(\xn) \,dx_1\ldots dx_n\\&=\int_{\Ga_0} e_\la(f,\eta) k_\mu(\eta)\, d\la(\eta).\label{funcBogol}
\end{align}
\end{remark}

\begin{remark}
By \eqref{Ga0}--\eqref{LP-meas-def}, we have that for any $f\in L^1(\X,dx)$
\begin{equation}\label{intexp}
\int_{\Ga_0}e_\la (f,\eta)d\la(\eta)=\exp\Bigl\{\int_\X
f(x)dx\Bigr\}.
\end{equation}
As a result, taking into account \eqref{cfofpirho}, we obtain from \eqref{funcBogol}
the expression for the~Laplace transform of the~Poisson measure
\begin{equation*}\label{LaplPois}
    \begin{split}
    \int_\Ga e^{-\langle \varphi, \ga\rangle} \, d\pi_\rho(\ga)
    &=\int_{\Ga_0} e_\la \bigl( e^{-\varphi(x)}-1, \eta \bigr) e_\la(\rho, \eta)
    \,d\la(\eta)\\&=\exp \Bigl\{ - \int_\X \bigl( 1- e^{-\varphi(x)}\bigr) \rho(x) dx\Bigr\}, \qquad \varphi\in C_0(\X).
    \end{split}
\end{equation*}
\end{remark}

\begin{remark}
  Of course, to obtain convergence of the expansion \eqref{funcBogol} for, say,
  $f\in L^1(\X,dx)$ we need some bounds for the correlation functions $k_\mu^\n$. For example, if the generalized Ruelle bound holds, that is, cf. \eqref{RB},
  \begin{equation}\label{genRB}
    k_\mu^\n(\xn) \leq A C^n (n!)^{1-\delta}, \quad \xn\in\X
  \end{equation}
  for some $A,C>0$, $\delta \in(0,1]$ independent on $n$, then the l.h.s. of \eqref{funcBogol} may be estimated by the expression
  \begin{equation*}
    1+A\sum_{n=1}^\infty \frac{\bigl( C\Vert f\Vert_{L^1(\X)}\bigr)^n}{(n!)^\delta} <\infty.
  \end{equation*}
\end{remark}

For a given system of functions $k^\n$ on $(\X)^n$ the question
about existence and uniqueness of a probability measure $\mu$ on $\Ga$
which has correlation functions $k_\mu^\n=k^\n$ is an analog
of the moment problem in classical analysis. Significant results in this area were obtained by A.\,Lenard.
\begin{proposition}[\!\!{\cite{Len1973}, \cite{Len1975a}}]\label{exuniqmeas}
Let $k:\Ga_0\to\R$.

 $1.$ Suppose that  $k$ is a positive definite function, that means that for any $G\in\Bbs$ such that $(KG)(\ga)\geq0$ for all $\ga\in\Ga$ the following inequality holds
 \begin{equation}\label{Lenpos}
    \int_{\Ga_0} G(\eta) k(\eta) \,d\la(\eta)\geq0.
 \end{equation}
 Suppose also that $k(\emptyset)=1$. Then there exists at least one measure $\mu\in\Mfm$ such that $k=k_\mu$.

 $2.$ For any $n\in\N$, $\La\in\Bb$, we set
 \[
 s_n^\La:= \frac{1}{n!}\int_{\La^n}k^\n(\xn) \,dx_1\ldots dx_n.
 \]
 Suppose that for all $m\in\N$, $\La\in\Bb$
\begin{equation}\label{bd}
    \sum_{n\in\N}\bigl(s_{n+m}^\La\bigr)^{-\frac{1}{n}}=\infty.
\end{equation}
 Then there exists at most one measure $\mu\in\Mfm$ such that $k=k_\mu$.
\end{proposition}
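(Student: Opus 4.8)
The plan for Part~1 is to reinterpret the right-hand side of \eqref{eqmeans} as a positive linear functional and then to represent it by a measure. Concretely, I would set $L(KG):=\int_{\Ga_0}G(\eta)k(\eta)\,d\la(\eta)$ for $G\in\Bbs$; since $K$ is injective with inverse given by \eqref{k-1transform}, this defines $L$ unambiguously on the image $K\Bbs\subset\Fcyl$. The positive definiteness hypothesis \eqref{Lenpos} says precisely that $L(F)\geq0$ whenever $F=KG\geq0$, while the normalization $k(\emptyset)=1$ gives $L(\1)=1$. Thus $L$ is a normalized positive linear functional on a space of cylinder observables, and the task reduces to producing $\mu\in\Mfm$ with $L(KG)=\int_\Ga KG\,d\mu$.

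Because $\Ga$ is not compact I would not invoke a Riesz--Markov representation on $\Ga$ directly. Instead, for each fixed $\La\in\Bb$ I would restrict attention to observables depending only on $\ga_\La$; on the finite-configuration space $\Ga(\La)=\Ga_0(\La)$, which is a locally compact disjoint union of symmetrized products of $\La$, a Riesz--Markov (or Daniell) representation yields a finite measure $\mu^\La$ representing $L$ there. Uniqueness of these finite-volume representations forces the family $\{\mu^\La\}_{\La\in\Bb}$ to be consistent under the projections $p_\La$, so the Kolmogorov projective-limit theorem delivers a single measure $\mu$ on $\bigl(\Ga,\B(\Ga)\bigr)$. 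It then remains to verify, by testing against coherent states as in \eqref{funcBogol} and against the indicators of the sets $Q(\La,n)$, that $k_\mu=k$ and that the moment condition \eqref{Mfm} holds. I expect the representation step on the infinite-dimensional, non-compact $\Ga_0(\La)$ --- making the Daniell construction rigorous and checking the consistency and tightness needed for the projective limit --- to be the main obstacle of this part.

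For Part~2, suppose $\mu_1,\mu_2\in\Mfm$ both satisfy $k_{\mu_1}=k_{\mu_2}=k$. By \eqref{eqmeans} they agree on $\int_\Ga KG\,d\mu_i$ for every $G\in\Bbs$, hence on all bounded cylinder functions; since $\B(\Ga)$ is generated by the counting variables $N_\La$, equivalently by the sets $Q(\La,n)$ of \eqref{QLa}, it suffices to prove that the projections $\mu_1^\La$ and $\mu_2^\La$ coincide for every $\La\in\Bb$. The key observation is that, choosing the symmetric finite-support function $f^\n=\1_{\La^n}$ in \eqref{cfdef}, one obtains $s_n^\La=\int_\Ga\binom{|\ga_\La|}{n}\,d\mu_i$, so $s_n^\La$ is the $n$-th binomial moment of $N_\La$ and is fixed by $k$ alone. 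The divergence condition \eqref{bd} is a Carleman-type criterion, and the plan is to use it to force determinacy of the finite-volume law: passing through the Bogoliubov functional \eqref{funcBogol}, the shifted bound \eqref{bd} should guarantee that this functional is the unique analytic continuation of the Taylor series built from the $k^\n$, whence $\mu_1^\La=\mu_2^\La$. The delicate point will be converting the scalar Carleman condition on the numbers $s_n^\La$ into uniqueness for the entire finite-volume point process rather than merely for the one-dimensional law of $N_\La$; the shift by $m$ in \eqref{bd} is precisely what is needed to control the joint moments entering this stronger determinacy statement.
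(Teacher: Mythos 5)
The paper does not prove this statement: it is quoted from Lenard \cite{Len1973,Len1975a} (with the adaptation to $\Ga$ carried out in \cite{Kun1999}), so your attempt can only be measured against Lenard's own argument. Your Part~2 is essentially the right route: $s_n^\La$ is the $n$-th binomial moment of $N_\La$, the shifted Carleman condition \eqref{bd} yields determinacy of the \emph{joint} law of finitely many counting variables $N_{\La_1},\dots,N_{\La_j}$ (taking $\La=\bigcup_i\La_i$, which is why the shift by $m$ is needed), and since the sets $Q(\La,n)$ generate $\B(\Ga)$ this pins down $\mu$. The detour through analytic continuation of the Bogoliubov functional is unnecessary but not harmful.

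Part~1, however, has two genuine gaps. First, the representation step is not a Riesz--Markov situation: the observables $F=KG$ with $G\in\Bbs$ are unbounded, polynomially growing functions of the occupation numbers (cf.\ \eqref{estBbs}), not elements of $C_0$ of the locally compact space $\Ga(\La)$, and $L$ is only known to be positive on the cone $\bigl\{KG\geq 0\bigr\}$. What you face is an infinite-dimensional Haviland-type moment problem; the heart of Lenard's proof is precisely the extension of the positive functional from this polynomial lattice to a Daniell integral via the Riesz interpolation property, and writing ``Riesz--Markov (or Daniell)'' conceals exactly that difficulty rather than resolving it. Second, and more seriously, your consistency argument is circular: you deduce consistency of the family $\{\mu^\La\}_{\La\in\Bb}$ from ``uniqueness of these finite-volume representations,'' but Part~1 assumes no determinacy condition --- uniqueness is precisely the content of Part~2 and can genuinely fail under the hypotheses of Part~1. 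So the projective-limit step as described does not go through; one must either build a single globally defined integral (as Lenard does) or select consistent versions by a separate tightness/compactness argument, neither of which your sketch supplies.
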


\begin{remark}\label{remLen}
  1. In \cite{Len1975a,Len1973}, the wider space of multiple configurations was considered. The adaptation for the space $\Ga$ was realized in~\cite{Kun1999}.

  2. It is worth noting also that the growth of correlation functions $k^\n$ up to $(n!)^2$ is admissible to have \eqref{bd}.

  3. Another conditions for existence and uniqueness for the moment problem on $\Ga$ were srudied in \cite{KK2002,BKKL1999}.
\end{remark}

\subsection{Statistical descriptions of Markov evolutions}
Spatial Markov processes in $\X$ may be described as stochastic
evolutions of configurations $\ga\subset\X$. In course of such evolutions points of configurations may disappear (die), move (continuously or with jumps from one position to another), or new particles may appear in a configuration (that is birth). The rates of these random events may depend on whole configuration that reflect an interaction between elements of the our system.

The construction of a spatial Markov process in the~continuum is highly difficult question which is not solved in a full generality at present, see e.g. a review \cite{Pen2008} and more detail references about birth-and-death processes in Section 3. Meanwhile, for the discrete systems the corresponding processes are constructed under quite general assumptions, see e.g. \cite{Lig1985}. One of the main difficulties for continuous systems includes the necessity to control number of elements in a bounded region. Note that the construction of spatial processes on bounded sets from $\X$ are typically well solved, see e.g. \cite{FM2004}.

The existing Markov process $\Ga\ni\ga\mapsto X_t^\ga\in\Ga$, $t>0$ provides solution the backward Kolmogorov equation for bounded continuous functions:
\begin{equation}\label{BKE}
  \frac{\partial}{\partial t} F_t = L F_t,
\end{equation}
where $L$ is the Markov generator of the process $X_t$. The question about existence and properties of solutions to \eqref{BKE} in proper spaces itself is also highly nontrivial problem of infinite-dimensional analysis. The Markov generator $L$ should satisfies the following two (informal) properties: 1)~to be conservative, that is $L1=0$, 2)~maximum principle, namely, if there exists $\ga_0\in\Ga$ such that $F(\ga)\leq F(\ga_0)$ for all $\ga\in\Ga$, then $(LF)(\ga_0)\leq0$. These properties might yield that the semigroup, related to \eqref{BKE} (provided it exists), will preserves constants and positive functions, correspondingly.

To consider an example of such $L$ let us consider a general Markov evolution with appearing and disappearing of groups of points (giving up the case of continuous moving of particles). Namely, let $F\in\Fcyl$ and set
\begin{equation}\label{genGen}
    (LF)(\ga)=\sum_{\eta\Subset\ga}\int_{\Ga_0} c (\eta, \xi, \ga\setminus\eta) \bigl[
    F((\ga\setminus\eta)\cup\xi)-F(\ga)\bigr]\,d\la(\xi).
\end{equation}
Heuristically, it means that any finite group $\eta$  of points from the existing configuration~$\ga$ may disappear and simultaneously a new group $\xi$ of points may appear somewhere in the space $\X$. The rate of this random event is equal to $c(\eta,\xi,\ga\setminus\eta)\geq0$. We need some minimal conditions on the rate $c$ to guarantee that at least
\begin{equation}\label{lfin}
    LF\in\Ffin   \qquad \text{ for all } F\in\Fcyl
\end{equation}
(see Section 3 for a particular case). The term in the sum in \eqref{genGen} with $\eta=\emptyset$ corresponds to a pure birth of a finite group $\xi$ of points whereas the part of integral corresponding to $\xi=\emptyset$ (recall that $\la(\{\emptyset\})=1$) is related to pure death of a finite sub-configuration $\eta\subset\ga$. The parts with $|\eta|=|\xi|\neq0$ corresponds to jumps of one group of points into another positions in $\X$. The rest parts present splitting and merging effects. In the present paper the technical realization of the ideas below is given for one-point birth-and-death parts only, i.e. for the cases $|\eta|=0$, $|\xi|=1$ and $|\eta|=1$, $|\xi|=0$, correspondingly.

As we noted before, for most cases appearing in applications, the existence problem for a corresponding Markov process with a generator $L$ is still open. On the other hand, the evolution of a state in the course of a stochastic dynamics is an important question in its own right. A~mathematical formulation of this question may be realized through
the forward Kolmogorov equation for probability measures (states) on
the configuration space $\Gamma$. Namely, we consider the pairing between
functions and measures on $\Ga$ given by
\begin{equation}\label{pairing}
  \langle F,\mu \rangle:=\int_\Ga F(\ga)\,d\mu(\ga).
\end{equation}
Then we consider the initial value problem
\begin{equation}\label{FPE-init}
  \frac{d}{d t} \langle F, \mu_t\rangle = \langle LF, \mu_t\rangle,
  \quad t>0, \quad \mu_t\bigr|_{t=0}=\mu_0,
\end{equation}
where $F$ is an arbitrary function from a proper set, e.g. $F\in K\bigl(\Bbs\bigr)\subset\Fcyl$.
In fact, the solution to \eqref{FPE-init} describes
the time evolution of distributions instead of the evolution of
initial points in the Markov process. We rewrite \eqref{FPE-init} in the following heuristic form
\begin{equation}\label{FPE}
  \frac{\partial}{\partial t} \mu_t = L^* \mu_t,
\end{equation}
where $L^*$ is the (informally) adjoint operator of $L$ with respect to the
pairing \eqref{pairing}.

In the physical literature, \eqref{FPE} is referred to the {\em Fokker--Planck
equation}. The Markovian property of $L$ yields that \eqref{FPE} might have a solution in the class of probability measures. However, the mere existence of the corresponding Markov process will not give us much information about properties
of the solution to \eqref{FPE}, in particular, about its moments or correlation functions. To do this, we suppose now that a solution
$\mu_t\in {\mathcal{M}}_{\mathrm{fm} }^1(\Ga )$ to \eqref{FPE-init}
exists and remains locally absolutely continuous with respect to
the Poisson measure $\pi$ for all $t>0$ provided $\mu_0$ has such a property.
Then one can consider the correlation function
$k_t:=k_{\mu_t}$, $t\geq0$.

Recall that we suppose \eqref{lfin}. Then, one can calculate
$K^{-1}LF$ using \eqref{k-1transform}, and, by \eqref{eqmeans}, we may rewrite
\eqref{FPE-init} in the following way
\begin{equation}\label{ssd0}
  \frac{d}{d t} \langle\!\langle K^{-1}F, k_t\rangle\!\rangle
  = \langle\!\langle K^{-1}LF, k_t\rangle\!\rangle,\quad t>0, \quad
  k_t\bigr|_{t=0}=k_0,
\end{equation}
for all $F\in K\bigl(\Bbs\bigr)\subset\Fcyl$. Here the pairing
between functions on $\Ga_0$ is given by
\begin{equation}
\left\langle \!\left\langle G,\,k\right\rangle \!\right\rangle
:=\int_{\Ga _{0}}G(\eta) k(\eta) \,d\la(\eta). \label{duality}
\end{equation}
Let us recall that then, by \eqref{LP-meas-def},
\begin{equation*}\label{duality-intro}
  \langle\!\langle G,k \rangle\!\rangle=
  \sum_{n=0}^\infty \frac{1}{n!} \int_{(\X)^n}
  G^{(n)}(x_1,\ldots,x_n)
  k^{(n)}(x_1,\ldots,x_n)\,dx_1\ldots dx_n,
\end{equation*}

Next, if we substitute $F=KG$, $G\in
\Bbs$ in \eqref{ssd0}, we derive
\begin{equation}\label{ssd}
  \frac{d}{d t} \langle\!\langle G, k_t\rangle\!\rangle
  = \langle\!\langle \widehat{L}G, k_t\rangle\!\rangle, \quad t>0, \quad
  k_t\bigr|_{t=0}=k_0,
\end{equation}
for all $G\in \Bbs$. Here the operator
\begin{equation*}\label{Lhat}
    (\widehat{L}G)(\eta ) :=( K^{-1}LKG)(\eta),\quad \eta\in\Ga_0
\end{equation*}
is defined point-wise for all $G\in\Bbs$ under conditions \eqref{lfin}.
As a result, we
are interested in a weak solution to the equation
\begin{equation}\label{QE}
  \frac{\partial}{\partial t} k_t
  = \widehat{L}^* k_t, \quad t>0, \quad
  k_t\bigr|_{t=0}=k_0,
\end{equation}
where $\widehat{L}^*$ is dual operator to $\widehat{L}$ with respect to the
duality \eqref{duality}, namely,
\begin{equation}
\int_{\Ga _{0}}(\widehat{L}G)(\eta) k(\eta) \,d\la(\eta)
=\int_{\Ga _{0}}G(\eta) (\widehat{L}^*k)(\eta) \,d\la(\eta). \label{dualoper}
\end{equation}

The procedure of deriving the operator $\widehat{L}$ for a given $L$ is fully combinatorial meanwhile to obtain the expression for the operator $\widehat{L}^*$ we need
an analog of integration by parts formula. For a difference operator $L$ considered in \eqref{genGen} this discrete integration by parts rule is presented in Lemma~\ref{Minlos} below.

We recall that any function on $\Ga_0$ may be identified with an infinite vector of symmetric functions of the growing number of variables. In this approach, the operator $\widehat{L}^*$ in \eqref{QE} will be realized as an infinite matrix $\bigl(\widehat{L}^*_{n,m}\bigr)_{n,m\in\N_0}$, where $\widehat{L}^*_{n,m}$ is a mapping from the space of symmetric functions of $n$ variables into the space of symmetric functions of $m$ variables. As a result, instead of equation \eqref{FPE-init} for infinite-dimensional objects we obtain an infinite system of equations for functions $k_t^\n$ each of them is a function of a finite number of variables, namely
\begin{equation}\label{QE-n}
    \begin{split}
    \frac{\partial}{\partial t} k_t^\n(\xn)&=\bigl(\widehat{L}^*_{n,m}k_t^\n\bigr)(\xn), \quad t>0, \quad n\in\N_0,\\ k_t^\n(\xn)\bigr|_{t=0}&=k_0^\n(\xn).
    \end{split}
\end{equation}
Of course, in general, for a fixed $n$, any equation from \eqref{QE-n} itself is not closed and includes functions $k_t^{(m)}$ of other orders $m\neq n$, nevertheless, the system \eqref{QE-n} is a closed linear system. The chain evolution equations for $k_t^{(n)}$ consists the so-called {\em hierarchy} which is an analog of the BBGKY hierarchy for Hamiltonian systems, see e.g. \cite{DSS1989}.

One of the main aims of the present paper is to study the classical solution to
\eqref{QE} in a proper functional space. The choice of such a space
might be based on estimates \eqref{RB}, or more generally, \eqref{genRB}.
However, even the correlation functions \eqref{cfofpiz} of the Poisson
measures shows that it is rather natural to study the solutions to
the equation \eqref{QE} in weighted $L^\infty$-type space of
functions with the Ruelle-type bounds. Integrable correlation functions are not natural for the dynamics on the spaces of locally finite configurations.
For example, it is well-known that the Poisson measure $\pi_\rho$ with integrable density $\rho(x)$ is concentrated on the space $\Ga_0$ of finite configurations (since in this case on can consider $\X$ instead of $\La$ in \eqref{Poisson}).
Therefore, typically, the case of integrable correlation functions yields that effectively our stochastic dynamics evolves through finite configurations only. Note that the case of an integrable first order correlation function is referred to {\em zero density} case in statistical physics.

In the present paper the restrict our attention to the so-called
{\em sub-Poissonian} correlation functions. Namely, for a given $C>0$ we consider the following Banach space
\begin{equation}\label{KC}
{\K}_{C}:=\bigl\{ k:\Ga_0\to\R \bigm| k\cdot
C^{-|\cdot |}\in L^\infty(\Ga_0,d\la )\bigr\}
\end{equation}
with the norm
\begin{equation*}\label{normKC}
    \Vert k\Vert _{{\K}_{C}}:=\Vert C^{-|\cdot |}k(\cdot )\Vert
_{L^{\infty }(\Ga _{0},\la )}.
\end{equation*}
It is clear that $k\in {\K}_{C}$ implies, cf. \eqref{RB},
\begin{equation}\label{ineqKC}
    \bigl|k(\eta )\bigr|\leq \Vert k\Vert _{{\K}_{C}}\,C^{|\eta|} \qquad
\mathrm{for} \ \la\text{-a.a.} \ \eta \in \Ga _{0}.
\end{equation}

In the following we distinguish two possibilities for a study of the initial value problem \eqref{QE}. We may try to solve this equation in one space $\K_C$. The well-posedness of the initial value problem  in this case is equivalent with an existence of the strongly continuous semigroup ($C_0$-semigroup in the sequel) in the space $\K_C$ with a generator $\widehat{L}^*$. However, the space $\K_C$ is isometrically isomorphic to the space $L^{\infty }(\Ga_0,C^{|\cdot|}d\la)$ whereas, by the H.~Lotz theorem \cite{Lot1985}, \cite{AGGG1986}, in a $L^\infty$ space any $C_0$-semigroup is uniformly continuous, that is it has a bounded generator. Typically, for the difference operator $L$ given in \eqref{genGen}, any operator $\widehat{L}^*_{n,m}$, cf. \eqref{QE-n}, might be bounded as an operator between two spaces of bounded symmetric functions of $n$ and $m$ variables whereas the whole operator $\widehat{L}^*$ is unbounded in $\K_C$.

To avoid this difficulties we use a trick which goes back to R.~Phillips \cite{Phi1955}. The main idea is to consider the semigroup in $L^\infty$ space not itself but as a dual semigroup $T^*(t)$ to a $C_0$-semigroup $T(t)$ with a generator $A$ in the pre-dual $L^1$ space. In this case $T^*(t)$ appears strongly continuous semigroup not on the whole $L^\infty$ but on the closure of the domain of $A^*$ only.

In our case this leads to the following scheme. We consider the pre-dual Banach space to $\K_C$, namely, for $C>0$,
\begin{equation}
\L_{C}:=L^{1}\bigl(\Ga _{0},C^{|\cdot|}d\la\bigr).
\label{space1}
\end{equation}
The norm in $\L_C$ is given by
\begin{equation*}\label{normLC}
    \|G\|_C:=\int_{\Ga_0} \bigl| G(\eta)\bigr| C^{|\eta|}\,d\la(\eta)=
    \sum_{n=0}^\infty \frac{C^n}{n!} \int_{(\X)^n}
  \bigl|G^{(n)}(x_1,\ldots,x_n)\bigr|
  \,dx_1\ldots dx_n.
\end{equation*}
Consider the initial value problem, cf. \eqref{ssd}, \eqref{QE},
\begin{equation}\label{ssd2}
  \frac{\partial}{\partial t} G_t
  = \widehat{L} G_t, \quad t>0, \quad
  G_t\bigr|_{t=0}=G_0\in\L_C.
\end{equation}
Whereas \eqref{ssd2} is well-posed in $\L_C$ there exists a $C_0$-semigroup $\widehat{T}(t)$ in $\L_C$. Then using Philips' result we obtain that the restriction of the dual semigroup $\widehat{T}^*(t)$ onto $\overline{\mathrm{Dom}(\widehat{L}^*)}$ will be $C_0$-semigroup with generator which is a part of $\widehat{L}^*$ (the details see in Section~3 below). This provides a solution to \eqref{QE} which continuously depends on an initial data from $\overline{\mathrm{Dom}(\widehat{L}^*)}$. And after we would like to find a more useful universal subspace of $\K_C$ which is not depend on the operator $\widehat{L}^*$. The realization of this scheme for a birth-and-death operator $L$ is presented in Section 3 below. As a result, we obtain the classical solution to \eqref{QE} for $t>0$ in a class of sub-Poissonian functions which satisfy the Ruelle-type bound \eqref{ineqKC}. Of course, after this we need to verify existence and uniqueness of measures whose correlation functions are solutions to \eqref{QE}, cf. Proposition~\ref{exuniqmeas} above. This usually can be done using proper approximation schemes, see e.g. Section 4.

There is another possibility for a study of the initial value problem \eqref{QE} which we will not touch below. Namely, one can consider this evolutional equation in a proper scale of spaces $\{\K_C\}_{C_*\leq C\leq C^*}$. In this case we will have typically that the solution is local in time only. More precisely, there exists $T>0$ such that for any $t\in[0,T)$ there exists a unique solution to \eqref{QE} and $k_t\in\K_{C_t}$ for some $C_t\in[C_*,C^*]$. We realized this approach in series of papers \cite{FKO2011a,FKO2011b,FKKoz2011,BKKK2011} using the so-called Ovsyannikov method \cite{Ovs1965,Tre1968,Saf1995}. This method provides less restrictions on systems parameters, however, the price for this is a finite time interval. And, of course, the question about possibility to recover measures via solutions to \eqref{QE} should be also solved separately in this case.

\section{Birth-and-death evolutions in the~continuum}

\subsection{Microscopic description} One of the most important classes of Markov evolution in the~continuum is given by the birth-and-death Markov processes
in the space $\Gamma$ of all configurations from $\X$. These are
processes in which an infinite number of individuals exist at each
instant, and the rates at which new individuals appear and some old ones
disappear depend on the instantaneous configuration of existing
individuals \cite{HS1978}. The corresponding Markov generators have
a~natural heuristic representation in terms of birth and death
intensities. The birth intensity $b(x,\ga)\geq0$ characterizes the
appearance of a~new point at $x\in\X$ in~the presence of a~given
configuration $\ga\in\Ga$. The death intensity $d(x,\ga)\geq0$
characterizes the probability of the event that the point $x$ of the configuration
$\ga$ disappears, depending on the location of the remaining
points of the configuration, $\ga\setminus x$.
Heuristically, the corresponding Markov generator is described by
the following expression, cf. \eqref{genGen},
\begin{multline}
(LF)(\ga ):=\sum_{x\in \ga }d(x,\ga \setminus x)\left[F(\ga
\setminus x)-F(\ga )\right]\\+\int_{\R^{d}}b(x,\ga
)\left[F(\ga \cup x)-F(\ga )\right] dx, \label{BaDGen}
\end{multline}
for proper functions $F:\Ga\rightarrow\R$.

The study of spatial birth-and-death processes was initiated by
C. Preston \cite{Pre1975}. This paper dealt with a solution of the
backward Kolmogorov equation \eqref{BKE}
under the restriction that only a~finite number of individuals are
alive at each moment of time. Under certain conditions,
corresponding processes exist and are temporally ergodic, that is,
there exists a~unique stationary distribution. Note that a more general
setting for birth-and-death processes only requires that the number of points in any compact
set remains finite at all times. A further progress in the study of these
processes was achieved by R. Holley and D. Stroock in \cite{HS1978}. They
described in detail an analytic framework for birth-and-death
dynamics. In particular, they analyzed the case of a birth-and-death process in
a bounded region.

Stochastic equations for spatial birth-and-death processes were
formulated in \cite{Gar1995}, through a~spatial version of the
time-change approach. Further, in \cite{GK2006}, these processes were
represented as solutions to a~system of stochastic equations, and
conditions for the existence and uniqueness of solutions to these
equations, as well as for the corresponding martingale problems, were
given. Unfortunately, quite restrictive assumptions on the birth and
death rates in \cite{GK2006} do not allow an application of
these results to several particular models that are interesting for
applications (see e.g. some of examples below).

A growing interest to the study of spatial birth-and-death processes, which we have recently observed, is stimulated
by (among others) an
important role which these processes play in several applications. For example,
in spatial plant ecology, a~general approach to the so-called
individual based models was developed in a~series of works, see e.g.\
\cite{BP1997,BP1999,DL2000,MDL2004} and the references therein. These
models are described as birth-and-death Markov processes in the
configuration space $\Gamma$ with specific rates $b$ and $d$
which reflect biological notions such as competition, establishment,
fecundity etc. Other examples of birth-and-death processes may be
found in mathematical physics. In particular, the Glauber-type
stochastic dynamics in $\Gamma$ is properly associated with the grand
canonical Gibbs measures for classical gases. This gives a~possibility to
study these Gibbs measures as equilibrium states for specific
birth-and-death Markov evolutions \cite{BCC2002}. Starting with a~Dirichlet form for a~given Gibbs measure, one
can consider an equilibrium stochastic dynamics \cite{KL2005}.
However, these dynamics give the time evolution of initial distributions from a quite narrow class. Namely, the class of admissible initial
distributions is essentially reduced to the states which are absolutely
continuous with respect to the invariant measure. Below we
construct non-equilibrium stochastic dynamics which may have a much
wider class of initial states.

This approach was successfully applied to the construction and
analysis of state evolutions for different versions of the
Glauber dynamics \cite{KKZ2006,FKKZ2010,FKK2010} and for some
spatial ecology models \cite{FKK2009}. Each of the considered models
required its own specific version of the construction of a semigroup, which
takes into account particular properties of corresponding birth and
death rates.

In this Section, we realize a~general approach considered in Section 2
to the construction of the state evolution corresponding to the
birth-and-death Markov generators. We present conditions on the birth
and death intensities which are sufficient for the existence of
corresponding evolutions as strongly continuous semigroups in proper
Banach spaces of correlation functions satisfying the Ruelle-type bounds.
Also we consider weaker assumptions on these intensities which provide
the corresponding evolutions for finite time intervals
in scales of Banach spaces as above.

\subsection{Expressions for $\widehat{L}$ and $\widehat{L}^*$. Examples of rates $b$ and $d$}

We always suppose that rates $d,b:\X\times\Ga\to[0;+\infty]$ from \eqref{BaDGen} satisfy the following assumptions
    \begin{align}
        &d(x,\eta), b(x,\eta)>0, &&  \text{$\eta\in\Ga_0\setminus\{\emptyset\}$, $x\in\X\setminus\eta$},\label{condbad1}\\
        &d(x,\eta), b(x,\eta)<\infty, &&  \text{$\eta\in\Ga_0$, $x\in\X\setminus\eta$},\label{condbad2}\\
        &\int_M \bigl( d(x,\eta)+ b(x,\eta)\bigr) d\la(\eta)<\infty,  &&
        \text{$M\in {\B}(\Ga_0)$ bounded,  a.a. $x\in\X$},\label{condbad3}\\
        &\int_\La \bigl( d(x,\eta)+ b(x,\eta)\bigr) dx<\infty, &&
        \text{$\eta\in\Ga_0$, $\La\in\Bb$}.\label{condbad4}
      \end{align}

\begin{proposition}\label{LF-Ffin}
  Let conditions \eqref{condbad1}--\eqref{condbad4} hold. The for any $G\in\Bbs$ and $F=KG$ one has
  $LF\in\Ffin$.
\end{proposition}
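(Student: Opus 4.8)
The plan is to fix an arbitrary finite configuration $\ga\in\Ga_0$ (that is, $|\ga|<\infty$) and to verify that the two summands of $(LF)(\ga)$ in \eqref{BaDGen} are separately finite; measurability of $\ga\mapsto(LF)(\ga)$ will follow from that of $F$, $b$, $d$ together with Fubini's theorem and is routine. Recall first that, since $G\in\Bbs$, we have $F=KG\in\Fcyl\subset\Ffin$, so $F(\ga)$ and $F(\ga\setminus x)$ are finite for every finite $\ga$ and every $x\in\ga$.

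For the death part $\sum_{x\in\ga}d(x,\ga\setminus x)\bigl[F(\ga\setminus x)-F(\ga)\bigr]$ this is immediate: the sum ranges over the finitely many points $x\in\ga$, each coefficient $d(x,\ga\setminus x)$ is finite by \eqref{condbad2} (here $\ga\setminus x\in\Ga_0$ and $x\in\X\setminus(\ga\setminus x)$), and the bracketed difference is finite because $F\in\Ffin$. Hence the death part is a finite sum of finite terms.

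The birth part is the only place where something must be checked. First I would use the combinatorics of the $K$-transform \eqref{K-transform}: splitting the finite subconfigurations of $\ga\cup x$ according to whether they contain $x$ (which is legitimate for a.a.\ $x$, as $\ga$ is Lebesgue-null), one obtains
\begin{equation*}
  F(\ga\cup x)-F(\ga)=\sum_{\eta\Subset\ga}G(\eta\cup x),\qquad x\in\X\setminus\ga.
\end{equation*}
Now the bounded support of $G$ does the essential work: since $G$ vanishes off $\bigsqcup_{n=0}^{N}\Ga^{(n)}(\La)$ for some $N\in\N$ and $\La\in\Bb$, every term $G(\eta\cup x)$ is zero unless $\eta\cup x\subset\La$, so in particular the whole increment $F(\ga\cup x)-F(\ga)$ vanishes for $x\notin\La$. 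This confines the birth integral to $\La$, and bounding the (at most $2^{|\ga|}$) summands by $\sup_{\xi\in\Ga_0}|G(\xi)|<\infty$ gives
\begin{equation*}
  \Bigl|\int_{\X} b(x,\ga)\bigl[F(\ga\cup x)-F(\ga)\bigr]\,dx\Bigr|
  \le 2^{|\ga|}\Bigl(\sup_{\xi\in\Ga_0}|G(\xi)|\Bigr)\int_{\La} b(x,\ga)\,dx.
\end{equation*}
The right-hand side is finite by \eqref{condbad4}, applied with $\eta=\ga\in\Ga_0$ and $\La\in\Bb$. Together with the death part this yields $(LF)(\ga)\in\R$ for every finite $\ga$, i.e.\ $LF\in\Ffin$.

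I do not anticipate a genuine obstacle: the one step requiring care is the increment identity for $F(\ga\cup x)-F(\ga)$ and the resulting observation that the bounded support of $G$ localizes this increment to $x\in\La$. That localization is exactly what makes the local integrability \eqref{condbad4} sufficient, so no global control of $b$ in $x$ is needed. Note also that only \eqref{condbad2} (for the death sum) and \eqref{condbad4} (for the birth integral) are actually used; \eqref{condbad1} and \eqref{condbad3} are not required for this particular statement.
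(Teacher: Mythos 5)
Your argument is correct and follows essentially the same route as the paper: both proofs reduce the matter to the observation that the increments $F(\ga\cup x)-F(\ga)$ and $F(\ga\setminus x)-F(\ga)$ vanish for $x\notin\La$ (the set supporting $G$), so that only \eqref{condbad2} for the finite death sum and the local integrability \eqref{condbad4} for the birth integral are needed. The sole cosmetic difference is that the paper quotes the ready-made estimate \eqref{estBbs} to bound the increments by $C\1_\La(x)(2+|\ga_\La|)^N$, whereas you rederive the localization from the identity $F(\ga\cup x)-F(\ga)=\sum_{\eta\Subset\ga}G(\eta\cup x)$ and use the cruder but equally sufficient bound $2^{|\ga|}\sup|G|$.
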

\begin{proof}
  By \eqref{estBbs}, there exist $\La\in\Bb$, $N\in\N$, $C>0$ (dependent on $G$) such that
  \begin{align*}
    \bigl| F(\ga\setminus x)- F(\ga)\bigr| &\leq C\1_\La(x) \bigl(1+|\ga_\La|\bigr)^N, \qquad x\in\ga, \ga \in\Ga,\\
    \bigl| F(\ga\cup x)- F(\ga)\bigr| &\leq C\1_\La(x) \bigl(2+|\ga_\La|\bigr)^N, \qquad \ga\in\Ga, x\in\X\setminus\ga.
  \end{align*}
  Then, by \eqref{condbad2}, \eqref{condbad4}, for any $\eta\in\Ga_0$,
 \begin{equation*}
    \bigl| (LF)(\eta)\bigr| \leq C \bigl(2+|\eta_\La|\bigr)^N \Bigl(\sum_{x\in\eta_\La}d(x,\eta\setminus x)
    + \int_\La b(x,\eta)dx\Bigr)<\infty.
 \end{equation*}
 The statement is proved.
\end{proof}

We start from the deriving of the expression for $\widehat{L}=K^{-1}LK$.
\begin{proposition}
For any $G\in\Bbs$ the following formula
holds
\begin{equation}
\begin{split}
(\widehat{L}G)(\eta ) =&-\sum_{\xi \subset \eta
}G(\xi )\sum_{x\in \xi }\bigl(K^{-1}d(x,\cdot\cup\xi\setminus
x)\bigr)(\eta\setminus
\xi)\\
&+\sum_{\xi \subset \eta }\int_{\R^{d}}\,G(\xi \cup
x)\bigl(K^{-1}b(x,\cdot\cup\xi)\bigr)(\eta\setminus \xi) dx,
\qquad \eta\in\Ga_0.
\end{split}\label{newexpr}
\end{equation}
\end{proposition}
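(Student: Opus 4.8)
The plan is to compute $\widehat L G=K^{-1}LKG$ directly and \emph{pointwise} on $\Ga_0$. First I would record that, by Proposition~\ref{LF-Ffin}, $F:=KG\in\Fcyl$ and $LF\in\Ffin$, so $(LF)(\zeta)$ is a finite number for every finite $\zeta\in\Ga_0$. Since \eqref{k-1transform} evaluates $K^{-1}$ at a fixed finite $\eta$ through the \emph{finite} alternating sum $\sum_{\zeta\subset\eta}(-1)^{|\eta\setminus\zeta|}(LF)(\zeta)$, no convergence issue arises and I am free to split $L$ from \eqref{BaDGen} into its four constituent pieces — the death terms $\ga\mapsto\sum_{x\in\ga}d(x,\ga\setminus x)F(\ga\setminus x)$ and $\ga\mapsto\sum_{x\in\ga}d(x,\ga\setminus x)F(\ga)$, and the birth terms $\ga\mapsto\int_\X b(x,\ga)F(\ga\cup x)\,dx$ and $\ga\mapsto\int_\X b(x,\ga)F(\ga)\,dx$ — and apply $K^{-1}$ to each separately. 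Such a split would be illegitimate on infinite $\ga$, where each sum may diverge, but it is harmless here because only finite $\zeta\subset\eta$ enter: the death pieces are finite sums of finite terms by \eqref{condbad2}, while for the birth pieces \eqref{condbad3}--\eqref{condbad4} let me interchange $\int_\X\cdots\,dx$ with the finite $K^{-1}$-sum.

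The computation rests on two combinatorial identities, which I would establish by Möbius inversion on the lattice of finite subsets. The first handles a death-type sum: for a kernel $H$,
\[
\Bigl(K^{-1}\sum_{x\in\cdot}H(x,\cdot\setminus x)\Bigr)(\eta)=\sum_{x\in\eta}\bigl(K^{-1}H(x,\cdot)\bigr)(\eta\setminus x),
\]
where $K^{-1}$ on the right acts in the configuration variable of $H$; this is checked by interchanging $\sum_{\zeta\subset\eta}$ with $\sum_{x\in\zeta}$ and relabelling $\zeta=\sigma\cup x$. The second is a restricted product rule: for any $a$ on $\Ga_0$,
\[
K^{-1}\bigl(a\cdot(KG)\bigr)(\tau)=\sum_{\sigma\subset\tau}G(\sigma)\,\bigl(K^{-1}a(\sigma\cup\cdot)\bigr)(\tau\setminus\sigma),
\]
obtained by expanding both $K^{-1}$ and $K$ via \eqref{K-transform}, \eqref{k-1transform} and summing first over the inner subset $\sigma$. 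I would combine these with the elementary splitting
\[
F(\zeta\cup x)=(KG)(\zeta)+\bigl(K[G(\cdot\cup x)]\bigr)(\zeta),
\]
which follows from \eqref{K-transform} by sorting the subsets of $\zeta\cup x$ according to whether they contain $x$.

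Assembling the pieces, both death contributions become expressions of the form $\sum_{x\in\eta}\sum_{\sigma\subset\eta\setminus x}[\ldots]$ whose $(KG)(\zeta)$-parts coincide and cancel, leaving exactly $-\sum_{x\in\eta}\sum_{\sigma\subset\eta\setminus x}G(\sigma\cup x)\bigl(K^{-1}d(x,\sigma\cup\cdot)\bigr)(\eta\setminus x\setminus\sigma)$; the substitution $\xi=\sigma\cup x$ turns this into the first (death) sum of \eqref{newexpr}. The same cancellation on the birth side leaves $\int_\X\sum_{\sigma\subset\eta}G(\sigma\cup x)\bigl(K^{-1}b(x,\sigma\cup\cdot)\bigr)(\eta\setminus\sigma)\,dx$, i.e. the second (birth) sum of \eqref{newexpr}. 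The two combinatorial identities are routine bookkeeping; the genuinely delicate point I expect is justifying the interchange of the $x$-integration with the $K^{-1}$-summation in the birth terms and the pointwise finiteness of every intermediate kernel $K^{-1}b(x,\sigma\cup\cdot)$ and $K^{-1}d(x,\sigma\cup\cdot)$, which is precisely what conditions \eqref{condbad2}--\eqref{condbad4} are designed to guarantee.
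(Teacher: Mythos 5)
Your proposal is correct and follows essentially the same route as the paper: both compute $K^{-1}LKG$ pointwise at a fixed finite $\eta$, justify all interchanges via Proposition~\ref{LF-Ffin} and conditions \eqref{condbad2}--\eqref{condbad4}, and rely on the same subset combinatorics (your ``restricted product rule'' is exactly the paper's interchange $\sum_{\zeta\subset\eta}\sum_{\xi\subset\zeta}=\sum_{\xi\subset\eta}\sum_{\zeta'\subset\eta\setminus\xi}$ followed by recognizing the inner alternating sum as $K^{-1}a(\xi\cup\cdot)$). The only difference is bookkeeping: the paper first collapses the differences via $F(\ga\setminus x)-F(\ga)=-(K(G(\cdot\cup x)))(\ga\setminus x)$ and $F(\ga\cup x)-F(\ga)=(K(G(\cdot\cup x)))(\ga)$ and then applies $K^{-1}$ once, whereas you apply $K^{-1}$ to four pieces and verify the cancellation of the $(KG)$-parts afterwards.
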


\begin{proof}
First of all, note that, by \eqref{condbad2} and \eqref{k-1transform}, the expressions $\bigl(K^{-1}b(x,\cdot\cup\xi)\bigr)(\eta)$ and $\bigl(K^{-1}d(x,\cdot\cup\xi)\bigr)(\eta)$ have sense.
Recall that $G\in\Bbs$ implies $F\in\Fcyl\subset\Ffin$, then, by \eqref{K-transform},
\begin{equation}\label{expr-d}
    \begin{split}
F(\ga \setminus x)- F(\ga )\ &=\sum_{\eta\Subset\ga\setminus x}G(\eta)-\sum_{\eta\Subset\ga}G(\eta)=\\&=-\sum_{\eta\Subset\ga\setminus x}G(\eta\cup x)= -(K(G(\cdot\cup x)))(\ga \setminus x).
\end{split}
\end{equation}
In the same way, for $x\notin\ga$, we derive
\begin{equation}\label{expr-b}
    F(\ga \cup x)- F(\ga ) = (K(G(\cdot\cup x)))(\ga ).
\end{equation}

By Proposition~\ref{LF-Ffin}, the values of $(\widehat{L}G)(\eta)$ are finite, and, by  \eqref{k-1transform}, one can interchange order of summations and integration in the following computations, that takes into account \eqref{expr-d}, \eqref{expr-b}:
\begin{align*}
  (\widehat{L}G)(\eta)=&-\sum_{\zeta\subset\eta }(-1)^{|\eta\setminus\zeta|}\sum_{x\in\zeta}
  d(x,\zeta\setminus x)\sum_{\xi\subset\zeta\setminus x}G(\xi\cup x)
  \\&+ \int_\X \sum_{\zeta\subset\eta }(-1)^{|\eta\setminus\zeta|}b(x,\zeta)\sum_{\xi\subset\zeta} G(\xi\cup x) \,dx,\\
  \intertext{and making substitution $\xi'=\xi\cup x\subset\zeta$, one may continue}
  =&-\sum_{\zeta\subset\eta }(-1)^{|\eta\setminus\zeta|}\sum_{\xi'\subset\zeta}
  \sum_{x\in\xi'}d(x,\zeta\setminus x)G(\xi')\\&+ \int_\X \sum_{\zeta\subset\eta }(-1)^{|\eta\setminus\zeta|}b(x,\zeta)\sum_{\xi\subset\zeta} G(\xi\cup x) \,dx.
\end{align*}
Next, for any measurable $H:\Ga_0\times\Ga_0\to\R$, one has
\begin{equation*}
    \sum_{\zeta\subset\eta}\sum_{\xi\subset\zeta}H(\xi,\zeta)
    =\sum_{\xi\subset\eta}\sum_{\substack{\zeta\subset\eta\\\xi\subset\zeta}}H(\xi,\zeta)
    =\sum_{\xi\subset\eta}\sum_{\zeta'\subset\eta\setminus\xi}H(\xi,\zeta'\cup\xi).
  \end{equation*}
Using this changing of variables rule, we continue:
\begin{align*}
  (\widehat{L}G)(\eta)
  =&-\sum_{\xi\subset\eta}\sum_{\zeta'\subset\eta\setminus\xi }(-1)^{|\eta\setminus(\xi\cup\zeta')|}
  \sum_{x\in\xi}d(x,\zeta'\cup\xi\setminus x)G(\xi)\\&+
  \int_\X \sum_{\xi\subset\eta} \sum_{\zeta'\subset\eta \setminus \xi }(-1)^{|\eta\setminus(\zeta'\cup\xi)|}b(x,\zeta'\cup\xi)G(\xi\cup x) \,dx,
\end{align*}
that yields \eqref{newexpr}, using the equality $\bigl|\eta\setminus(\xi\cup\zeta')\bigr|=\bigl|(\eta\setminus \xi)\setminus \zeta'\bigr|$ and \eqref{k-1transform}.
\end{proof}

\begin{remark}\label{hatL-matrix}
The initial value problem \eqref{ssd2} can be considered in the following matrix form, cf.~\eqref{QE-n},
  \begin{equation*}\label{ssd-n}
    \begin{split}
    \frac{\partial}{\partial t} G_t^\n(\xn)&=\bigl(\widehat{L}_{n,m}G_t^\n\bigr)(\xn), \quad t>0, \quad n\in\N_0,\\ G_t^\n(\xn)\bigr|_{t=0}&=G_0^\n(\xn).
    \end{split}
  \end{equation*}
  The expression \eqref{newexpr} shows that the matrix above has on the main diagonal the collection of operators $\widehat{L}_{n,n}$, $n\in\N_0$ which forms the following operator on functions on $\Ga_0$:
  \begin{equation}\label{diag-oper}
    (\widehat{L}_\mathrm{diag}G)(\eta) = -D(\eta) G(\eta) + \sum_{y\in\eta}\int_\X
    G\bigl((\eta\setminus y)\cup x\bigr) \bigl[b(x,\eta)-b(x,\eta\setminus y)\bigr]\,dx,
  \end{equation}
  where the term in the square brackets is equal, by \eqref{k-1transform}, to $\bigl(K^{-1}b(x,\cdot\cup(\eta\setminus y))\bigr)(\{y\})$. Next, by \eqref{newexpr}, there exist only one non-zero upper diagonal in the matrix. The corresponding operator is
  \begin{equation}\label{upper-oper}
    (\widehat{L}_\mathrm{upper}G)(\eta) =\int_\X G(\eta\cup x) b(x,\eta) \,dx,
  \end{equation}
  since $\bigl(K^{-1}b(x,\cdot\cup \eta)\bigr)(\emptyset)=b(x,\eta)$. The rest part of the~expression~\eqref{newexpr} corresponds to the low diagonals.
\end{remark}

As we mentioned above, to derive the expression for $\widehat{L}^*$ we need some discrete analog of the integration by parts formula. As such, we will use the partial case of the
well-known lemma (see e.g. \cite{KMZ2004}):
\begin{lemma}
\label{Minlos} For any measurable function $H:\Ga_0\times\Ga_0\times
\Ga_0\rightarrow{\R}$
\begin{equation} \label{minlosid}
\int_{\Ga _{0}}\sum_{\xi \subset \eta }H\left( \xi ,\eta \setminus
\xi ,\eta \right) d\la \left( \eta \right) =\int_{\Ga _{0}}\int_{\Ga
_{0}}H\left( \xi ,\eta ,\eta \cup \xi \right) d\la \left( \xi
\right) d\la \left( \eta \right)
\end{equation}
if at least one side of the equality is finite for $|H|$.
\end{lemma}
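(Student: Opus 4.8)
The plan is to reduce the claimed identity \eqref{minlosid} to a Fubini-type rearrangement using the explicit structure of the Lebesgue--Poisson measure \eqref{LP-meas-def}. The left-hand side is an integral over $\Ga_0$ of a sum over subconfigurations $\xi\subset\eta$; the right-hand side is a double integral over $\Ga_0\times\Ga_0$. The natural strategy is to unfold both sides into sums of integrals over the product spaces $(\X)^n$, exploiting that any function on $\Ga_0$ is an infinite vector of symmetric functions, and that $\la=\sum_n \frac1{n!}m^{(n)}$.

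First I would fix $\eta\in\Ga^{(n)}$ and rewrite the inner sum $\sum_{\xi\subset\eta}$ combinatorially: a subconfiguration $\xi\subset\eta$ with $|\xi|=k$ corresponds to a choice of $k$ of the $n$ points of $\eta$, with the complement $\eta\setminus\xi$ being the remaining $n-k$ points. Thus, writing $\eta=\{x_1,\dots,x_n\}$, the left-hand side becomes
\begin{equation*}
\sum_{n=0}^\infty \frac{1}{n!}\int_{(\X)^n}\ \sum_{\substack{I\subset\{1,\dots,n\}}} H\bigl(\xi_I,\eta\setminus\xi_I,\eta\bigr)\,dx_1\dots dx_n,
\end{equation*}
where $\xi_I$ collects the points indexed by $I$. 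Grouping the summation over subsets $I$ by their cardinality $k$, and using symmetry of the integrand variables together with the $\binom{n}{k}$ identical contributions, the factor $\frac{1}{n!}\binom{n}{k}=\frac{1}{k!\,(n-k)!}$ emerges. Setting $k=|\xi|$ and $n-k=|\eta\setminus\xi|$ and relabelling the two blocks of variables as an integration variable $\xi\in\Ga^{(k)}$ and $\eta'\in\Ga^{(n-k)}$ (with $\eta'$ playing the role of the old $\eta\setminus\xi$ and $\eta'\cup\xi$ the role of the old full $\eta$), the double sum over $k$ and $n$ with $n-k$ free reorganizes exactly into
\begin{equation*}
\sum_{k=0}^\infty\sum_{j=0}^\infty \frac{1}{k!}\frac{1}{j!}\int_{(\X)^k}\int_{(\X)^j} H\bigl(\xi,\eta,\eta\cup\xi\bigr)\,(dx)^j\,(dx)^k,
\end{equation*}
which is precisely $\int_{\Ga_0}\int_{\Ga_0}H(\xi,\eta,\eta\cup\xi)\,d\la(\xi)\,d\la(\eta)$, the right-hand side. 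The only subtlety in this rearrangement is that in passing from $\eta\cup\xi$ back to an ordered product one must note that, for $\la\otimes\la$-a.a.\ pairs, $\xi$ and $\eta$ are disjoint, so $|\eta\cup\xi|=|\xi|+|\eta|$ and the gluing is unambiguous; the diagonal where they intersect has measure zero.

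The main obstacle is not the combinatorics but the justification of interchanging the (infinite) summation and the integrations, i.e.\ the absolute convergence needed to apply Fubini. This is exactly why the hypothesis is stated for $|H|$: I would first run the entire computation with $|H|$ in place of $H$, where every term is nonnegative and Tonelli's theorem permits the rearrangement unconditionally, establishing that both sides agree in $[0,+\infty]$. Given that one side is finite for $|H|$, both are, and then the same manipulations applied to $H$ itself are legitimate term by term. Thus the finiteness assumption is what upgrades the formal combinatorial identity into a rigorous one.
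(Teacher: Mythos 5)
The paper does not prove this lemma at all: it is quoted as a ``partial case of the well-known lemma'' with a reference to the literature, so there is no in-paper argument to compare against. Your proof is correct and is exactly the standard derivation of this identity: unfolding $\la=\sum_n\frac{1}{n!}m^{(n)}$, converting $\sum_{\xi\subset\eta}$ into a sum over index sets, using symmetry to collapse the $\binom{n}{k}$ equal contributions into the factor $\frac{1}{k!\,(n-k)!}$, reindexing by $j=n-k$, and discarding the Lebesgue-null diagonal so that $\eta\cup\xi$ is an honest $(k+j)$-point configuration; the Tonelli-for-$|H|$-then-Fubini-for-$H$ treatment of the finiteness hypothesis is precisely what the statement's caveat is for.
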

In particular, if $H(\xi,\cdot,\cdot)\equiv0$ if only $|\xi|\neq1$ we obtain an analog of \eqref{Mecke}, namely,
 \begin{equation}\label{laMecke}
 \int_{\Ga_0} \sum_{x\in\eta} h(x,\eta\setminus x,\eta)d\la(\eta)
 =\int_{\Ga_0} \int_\X h(x,\eta,\eta\cup x) dx d\la(\eta),
\end{equation}
for any measurable function $h:\X\times\Ga_0\times
\Ga_0\to\R$ such that both sides make sense.

Using this, one can derive the explicit form of $\widehat{L}^*$.
\begin{proposition}\label{propLtriangle}
  For any $k\in\Bbs$ the following formula holds
  \begin{equation}\begin{split}
(\widehat{L}^{\ast}k)(\eta )=&-\sum_{x\in \eta}\int_{\Ga_0} k(\zeta
\cup \eta )\bigl( K^{-1}d(x,\cdot\cup \eta\setminus x)\bigr)
(\zeta)d\la (\zeta ) \\
& +\sum_{x\in \eta}\int_{\Ga_0}k(\zeta \cup (\eta \setminus
x))\bigl( K^{-1}b(x,\cdot\cup \eta\setminus x)\bigr) (\zeta)d\la
(\zeta),
\end{split}\label{defLtriangle}
  \end{equation}
where $\widehat{L}^{\ast}k$ is defined by \eqref{dualoper}.
\end{proposition}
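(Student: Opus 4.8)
The plan is to insert the explicit expression \eqref{newexpr} for $\widehat{L}$ into the left-hand side of the defining relation \eqref{dualoper} and then to move all combinatorial summations and integrations off of the test function $G$ and onto $k$, using the discrete integration-by-parts identity of Lemma~\ref{Minlos}. Since both $G$ and $k$ belong to $\Bbs$ (bounded, with bounded support) and the rates satisfy \eqref{condbad1}--\eqref{condbad4} (so in particular $K^{-1}d$ and $K^{-1}b$ make pointwise sense, as already noted), every iterated sum and integral that appears converges absolutely. I would therefore dispose of the Fubini/absolute-convergence bookkeeping once at the outset and then carry out the algebraic rearrangements freely.

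First I would treat the death (first) term of \eqref{newexpr}. Setting $H(\xi,\eta\setminus\xi,\eta):=G(\xi)\sum_{x\in\xi}\bigl(K^{-1}d(x,\cdot\cup\xi\setminus x)\bigr)(\eta\setminus\xi)\,k(\eta)$, the integral $\int_{\Ga_0}\sum_{\xi\subset\eta}H(\xi,\eta\setminus\xi,\eta)\,d\la(\eta)$ is exactly of the form handled by \eqref{minlosid}, which converts it into $\int_{\Ga_0}\int_{\Ga_0}H(\xi,\zeta,\zeta\cup\xi)\,d\la(\zeta)\,d\la(\xi)$. Now $k$ appears as $k(\zeta\cup\xi)$ while $G$ has been pulled out as the free factor $G(\xi)$; relabeling $\xi\mapsto\eta$ and reading off the kernel acting on $k$ yields precisely the first term of \eqref{defLtriangle}, with no further manipulation required.

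The birth (second) term is where the real work lies and where I expect the main obstacle. Applying \eqref{minlosid} in the same way turns $\sum_{\xi\subset\eta}$ into a double Lebesgue--Poisson integral, leaving an expression of the shape $\int_{\Ga_0}\int_{\Ga_0}\int_{\X}G(\xi\cup x)\bigl(K^{-1}b(x,\cdot\cup\xi)\bigr)(\zeta)\,k(\zeta\cup\xi)\,dx\,d\la(\zeta)\,d\la(\xi)$. The difficulty is that the test function still appears as $G(\xi\cup x)$, so the birth variable $x$ must be reabsorbed into the configuration to expose a clean factor $G(\eta)$. The tool is the Mecke-type identity \eqref{laMecke}, read from its right-hand side back to its left-hand side: with $\eta=\xi\cup x$ (so $\xi=\eta\setminus x$), the operator $\int_{\Ga_0}\int_\X(\,\cdot\,)\,dx\,d\la(\xi)$ becomes $\int_{\Ga_0}\sum_{x\in\eta}(\,\cdot\,)\,d\la(\eta)$. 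The key bookkeeping point, and the step most prone to error, is that after this substitution every remaining occurrence of $\xi$ turns into $\eta\setminus x$, so the kernel becomes $\bigl(K^{-1}b(x,\cdot\cup\eta\setminus x)\bigr)(\zeta)$ and the correlation function becomes $k(\zeta\cup(\eta\setminus x))$, matching the second term of \eqref{defLtriangle} exactly. Collecting the two contributions and comparing with \eqref{dualoper} then identifies the bracketed expression as $(\widehat{L}^{\ast}k)(\eta)$, which completes the proof.
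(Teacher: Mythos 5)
Your proposal is correct and follows essentially the same route as the paper: insert \eqref{newexpr} into \eqref{dualoper}, apply Lemma~\ref{Minlos} to both terms to pull $G$ out of the inner summation, and then use the Mecke-type identity \eqref{laMecke} on the birth term to reabsorb the variable $x$ into the configuration, with the finiteness of all integrals guaranteed by $G,k\in\Bbs$ and conditions \eqref{condbad3}--\eqref{condbad4}. The bookkeeping you flag as the delicate step (every remaining $\xi$ becoming $\eta\setminus x$) is handled identically in the paper's computation.
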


\begin{proof}
  Using Lemma~\ref{Minlos}, \eqref{dualoper}, \eqref{newexpr}, we obtain for any $G\in\Bbs$
  \begin{align*}
    &\int_{\Ga_0} G(\eta) \bigl(\widehat{L}^{\ast}k\bigr)(\eta)\,d\la(\eta)\\
    =&-\int_{\Ga_0} \sum_{\xi \subset \eta
}G(\xi )\sum_{x\in \xi }\bigl(K^{-1}d(x,\cdot\cup\xi\setminus
x)\bigr)(\eta\setminus
\xi)k(\eta)\,d\la(\eta)\\
&+\int_{\Ga_0} \sum_{\xi \subset \eta }\int_{\R^{d}}\,G(\xi \cup
x)\bigl(K^{-1}b(x,\cdot\cup\xi)\bigr)(\eta\setminus \xi) \,dxk(\eta)\,d\la(\eta)\\
=& -\int_{\Ga_0} \int_{\Ga_0} G(\xi )\sum_{x\in \xi }\bigl(K^{-1}d(x,\cdot\cup\xi\setminus
x)\bigr)(\eta)k(\eta\cup\xi)\,d\la(\eta)\,d\la(\xi)\\
&+\int_{\Ga_0} \int_{\Ga_0}\int_{\R^{d}}\,G(\xi \cup
x)\bigl(K^{-1}b(x,\cdot\cup\xi)\bigr)(\eta) \,dx k(\eta\cup\xi)\,d\la(\eta)\,d\la(\xi).
  \end{align*}
Applying \eqref{laMecke} for the second term, we easily obtain the statement. The correctness of using \eqref{Mecke} and \eqref{laMecke} follow from the assumptions that $G,k\in\Bbs$, therefore, all integrals over $\Ga_0$ will be taken, in fact, over some bounded $M\in\B(\Ga_0)$. Then, using \eqref{condbad3}, \eqref{condbad4}, we obtain that the all integrals are finite.
\end{proof}

\begin{remark}
  Accordingly to Remark~\ref{hatL-matrix} (or just directly from \eqref{defLtriangle}), we have that the matrix corresponding to \eqref{QE-n} has the main diagonal given by
  \begin{multline}\label{diag-oper-ast}
    (\widehat{L}^*_\mathrm{diag}k)(\eta) = -D(\eta) k(\eta) \\+ \sum_{x\in\eta}\int_\X
    k\bigl((\eta\setminus x)\cup y\bigr) \bigl[b\bigl(x,(\eta\setminus x)\cup y\bigr)-b\bigl(x,\eta\setminus x\bigr)\bigr]\,dy,
  \end{multline}
  where we have used \eqref{laMecke}. Next, this matrix has only one non zero low diagonal, given by the expression
    \begin{equation}\label{low-oper-ast}
    (\widehat{L}^*_\mathrm{low}k)(\eta) =\sum_{x\in\eta} k(\eta\setminus x) b(x,\eta\setminus x).
  \end{equation}
  The rest part of expression \eqref{defLtriangle} corresponds to the upper diagonals.
\end{remark}

Let us consider now several examples of rates $b$ and $d$ which will appear in the following considerations (concrete examples of birth-and-death dynamics, with such rates, important for applications will be presented later). As we see from \eqref{newexpr}, \eqref{defLtriangle}, we always need to calculate expressions like $\bigl(K^{-1} a(x,\cdot\cup\xi)\bigr)(\eta)$, $\eta\cap\xi=\emptyset$, where $a$ equal to $b$ or $d$. We consider the following kinds of function $a:\X\times\Ga\to\R$:
\begin{itemize}
  \item {\em Constant rate}:
        \begin{equation}\label{constrate}
            a(x,\ga)\equiv m >0.
        \end{equation}
        If we substitute $f\equiv0$ into \eqref{Kexp}, we obtain that
      \begin{equation}\label{K-1m}
        (K^{-1}m)(\eta)=m0^{|\eta|}, \quad \eta\in\Ga_0,
      \end{equation}
      where as usual $0^0:=1$, and, of course, in this case $K^{-1} a(x,\cdot\cup\xi)(\eta)$ also equal to $m0^{|\eta|}$ for any $\xi\in\Ga_0$;
  \item {\em Linear rate}:
    \begin{equation}\label{linearrate}
        a(x,\ga)=\langle c(x-\cdot),\ga\rangle=\sum_{y\in\ga}c(x-y),
    \end{equation}
    where $c$ is a potential like in Example~\ref{ex:energy}. Any such $c$ for a given $x\in\X$ defines a function $C_x:\Ga_0\to\R$ such that $C_x(\eta)=0$ for all $\eta\notin\Ga^{(1)}$ and, for any $\eta\in\Ga^{(1)}$, $y\in\X$ with $\eta=\{y\}$, we have $C_x(\eta)=c(x-y)$. Then, in this case, taking into account \eqref{K-1m} and the obvious equality
    \begin{equation}\label{linearproperty}
        \langle c(x-\cdot),\eta\cup\xi\rangle=\langle c(x-\cdot),\eta\rangle+\langle c(x-\cdot),\xi\rangle,
    \end{equation}
    we obtain
    \begin{equation}\label{K-1-of-lin}
        \bigl(K^{-1} a(x,\cdot\cup\xi)\bigr)(\eta)=
        a(x,\xi)0^{|\eta|}+C_x(\eta), \quad \eta\in\Ga_0.
    \end{equation}
  \item {\em Exponential rate}:
  \begin{equation}\label{exprate}
        a(x,\ga)=e^{\langle c(x-\cdot),\ga\rangle}=\exp\Bigl\{\sum_{y\in\ga}c(x-y)\Bigr\},
    \end{equation}
    where $c$ as above. Taking into account \eqref{linearproperty} and \eqref{Kexp}, we obtain that in this case
    \begin{equation}\label{K-1-of-exp}
        \bigl(K^{-1} a(x,\cdot\cup\xi)\bigr)(\eta)=
        a(x,\xi)e_\la\bigl(e^{c(x-\cdot)}-1,\eta\bigr), \quad \eta\in\Ga_0.
    \end{equation}
    \item {\em Product of linear and exponential rates}:
      \begin{equation}\label{exprate1}
        a(x,\ga)=\langle c_1(x-\cdot),\ga\rangle e^{\langle c_2(x-\cdot),\ga\rangle},
    \end{equation}
    where $c_1$ and $c_2$ are potentials as before. Then we have
    \begin{equation}\label{asd}
      a(x,\eta\cup\xi)=a(x,\eta)e^{\langle c_2(x-\cdot),\xi\rangle}
      +a(x,\xi)e^{\langle c_2(x-\cdot),\eta\rangle}.
    \end{equation}
    Next, by \eqref{k-1transform},
    \begin{align}
      \bigl(K^{-1}a(x,\cdot)\bigr)(\eta)=&\sum_{\zeta\subset\eta}(-1)^{|\eta\setminus\zeta|}
      \sum_{y\in\zeta}c_1(x-y) e^{\langle c_2(x-\cdot),\zeta\rangle}\notag\\
      =&\sum_{y\in\eta}c_1(x-y)\sum_{\zeta\subset\eta\setminus y}(-1)^{|(\eta\setminus y)\setminus\zeta|} e^{\langle c_2(x-\cdot),\zeta\cup y\rangle},\notag\\\intertext{and taking into account \eqref{Kexp},}
      =&\sum_{y\in\eta}c_1(x-y)e^{c_2(x-y)}e_\la\bigl( e^{ c_2(x-\cdot)}-1,\eta\setminus y\bigr).\label{ase}
    \end{align}
    By \eqref{asd} and \eqref{ase}, we finally obtain that in this case
    \begin{equation}\label{K-1-of-lin-exp}
        \begin{split}
        \bigl(K^{-1} a(x,\cdot\cup\xi)\bigr)(\eta)=&\,e^{\langle c_2(x-\cdot),\zeta\rangle}\sum_{y\in\eta}c_1(x-y)e^{c_2(x-y)}e_\la\bigl( e^{ c_2(x-\cdot)}-1,\eta\setminus y\bigr)
        \\&+a(x,\xi)e_\la\bigl( e^{ c_2(x-\cdot)}-1,\eta\bigr), \quad \eta\in\Ga_0.
        \end{split}
    \end{equation}
    \item {\em Mixing of linear and exponential rates}:
    \begin{equation}\label{mixrate}
        a(x,\ga)=\sum_{y\in\ga}c_1(x-y) e^{\langle c_2(y-\cdot),\ga\setminus y\rangle}.
    \end{equation}
    We have
    \begin{align*}
      a(x,\eta\cup\xi)=&\sum_{y\in\eta}c_1(x-y) e^{\langle c_2(y-\cdot),\eta\setminus y\rangle}e^{\langle c_2(y-\cdot),\xi\rangle}
      \\&+\sum_{y\in\xi}c_1(x-y) e^{\langle c_2(y-\cdot),\eta\rangle}e^{\langle c_2(y-\cdot),\xi\setminus y\rangle}.
    \end{align*}
    Then, similarly to \eqref{K-1-of-lin-exp}, we easily derive
    \begin{align*}
      \bigl(K^{-1}a(x,\cdot\cup\xi)\bigr)(\eta)=&\sum_{y\in\eta}c_1(x-y)
      e_\la\bigl(e^{c_2(y-\cdot)}-1,\eta\setminus y\bigr)e^{\langle c_2(y-\cdot),\xi\rangle}
      \\&+\sum_{y\in\xi}c_1(x-y) e_\la\bigl(e^{c_2(y-\cdot)}-1,\eta\bigr)e^{\langle c_2(y-\cdot),\xi\setminus y\rangle}.
    \end{align*}
\end{itemize}

Using the similar arguments one can consider polynomials rates and their compositions with exponents as well.

\subsection{Semigroup evolutions in the space of quasi-observables}\label{subsect-evol-qo}

We proceed now to the construction of a semigroup in the space $\L_C$, $C>0$, see \eqref{space1}, which has a generator, given by $\widehat{L}$, with a proper domain. To define such domain, let us set
\begin{align}
D\left( \eta \right) :=&\sum_{x\in \eta }d\left( x,\eta
\setminus x\right) \geq 0,\quad\eta \in \Ga _{0}; \label{mult}\\
\D :=&\left\{ G\in \L_{C}~|~D\left( \cdot \right) G\in
\L_{C}\right\}.\label{dom}
\end{align}
Note that $\Bbs \subset \D$
and $\Bbs$ is a~dense set in $\L_C$. Therefore, $\D$ is
also a dense set in $\L_C$. We will show now that $(\widehat{L},\D)$ given by \eqref{newexpr}, \eqref{dom} generates $C_0$-semigroup on $\L_C$ if only `the {\em full energy of death}', given by \eqref{mult}, is big enough.

\begin{theorem}\label{th1}
Suppose that there exists $a_{1}\geq1$, $a_2>0$ such that for all
$\xi \in \Ga _{0}$ and a.a. $x\in\X$
\begin{align}
\sum_{x\in\xi}\int_{\Ga _{0}}\left\vert K^{-1}d\left( x,\cdot
\cup \xi \setminus x\right) \right\vert \left( \eta \right)
C^{\left\vert \eta \right\vert }d\la \left( \eta \right) &\leq
a_{1} D(\xi) ,
\label{est-d} \\
\sum_{x\in\xi}\int_{\Ga _{0}}\left\vert K^{-1}b\left( x,\cdot
\cup \xi \setminus x\right) \right\vert \left( \eta \right)
C^{\left\vert \eta \right\vert }d\la \left( \eta \right) &\leq
a_{2}D(\xi) . \label{est-b}
\end{align}
and, moreover,
\begin{equation}\label{asmall}
a_1+\frac{a_2}{C}<\frac{3}{2}.
\end{equation}
Then $(\widehat{L},\D)$ is the generator of a~holomorphic semigroup
$\widehat{T}\left( t\right) $ on $\L_{C}$.
\end{theorem}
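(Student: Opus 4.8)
The plan is to write $\hL=\hL_0+\hL_1$, where $\hL_0$ is the multiplication operator $(\hL_0 G)(\eta):=-D(\eta)G(\eta)$ with $D$ from \eqref{mult}, and $\hL_1:=\hL-\hL_0$. The operator $\hL_0$ is exactly the part of \eqref{newexpr} coming from $\xi=\eta$, since $\bigl(K^{-1}d(x,\cdot\cup\eta\setminus x)\bigr)(\emptyset)=d(x,\eta\setminus x)$ and $\sum_{x\in\eta}d(x,\eta\setminus x)=D(\eta)$; thus $\hL_1$ collects the genuinely off-diagonal death terms ($\xi\subsetneq\eta$) together with all the birth terms. By \eqref{dom} we have $\D=\Dom(\hL_0)$, and this will be the domain of $\hL$.

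First I would treat $\hL_0$. As multiplication by the non-positive function $-D(\cdot)\le0$ on $\L_C=L^1\bigl(\Ga_0,C^{|\cdot|}d\la\bigr)$ with domain $\D$, it is closed and densely defined (note $\Bbs\subset\D$ is dense) and generates the contraction semigroup $\bigl(e^{-tD(\cdot)}\bigr)_{t\ge0}$. Its spectrum lies in $(-\infty,0]$, so its resolvent is the multiplier $(\la+D(\cdot))^{-1}$, and for every $\delta\in(0,\tfrac{\pi}{2})$ and every $\la$ in the sector $\Sigma_\delta:=\{\la\neq0:|\arg\la|<\tfrac{\pi}{2}+\delta\}$ a direct estimate gives $\esssup_\eta\tfrac{|\la|}{|\la+D(\eta)|}\le(\cos\delta)^{-1}$. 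Hence $\|\la(\la-\hL_0)^{-1}\|\le(\cos\delta)^{-1}$ and $\|\hL_0(\la-\hL_0)^{-1}\|\le1+(\cos\delta)^{-1}$ on $\Sigma_\delta$; in particular $\hL_0$ generates a bounded holomorphic semigroup of angle $\tfrac{\pi}{2}$.

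Next I would establish the relative bound, working first with $G\in\Bbs$ and using Lemma~\ref{Minlos}. Applying \eqref{minlosid} (with absolute values) to the full death term of \eqref{newexpr} rewrites $\int_{\Ga_0}\sum_{\xi\subset\eta}(\cdots)C^{|\eta|}d\la(\eta)$ as $\int_{\Ga_0}|G(\xi)|C^{|\xi|}\bigl[\sum_{x\in\xi}\int_{\Ga_0}|K^{-1}d(x,\cdot\cup\xi\setminus x)|(\eta)C^{|\eta|}d\la(\eta)\bigr]d\la(\xi)$, and by \eqref{est-d} the bracket is $\le a_1D(\xi)$. The crucial observation is that the $\eta=\emptyset$ contribution to the bracket equals exactly $\sum_{x\in\xi}d(x,\xi\setminus x)=D(\xi)$, i.e. it reproduces $\hL_0$; subtracting it leaves the off-diagonal death part bounded by $(a_1-1)\|D(\cdot)G\|_C$. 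For the birth term I would apply \eqref{minlosid} and then the Mecke-type identity \eqref{laMecke} to relocate the extra point, which produces a factor $C^{-1}$; combined with \eqref{est-b} this yields the bound $\tfrac{a_2}{C}\|D(\cdot)G\|_C$. Altogether $\|\hL_1G\|_C\le q\,\|\hL_0G\|_C$ with $q:=(a_1-1)+\tfrac{a_2}{C}$, and by \eqref{asmall} we have $q<\tfrac12$; this estimate extends from $\Bbs$ to all of $\D$ by density in the graph norm.

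Finally I would run a Neumann-series perturbation argument. Since $q<\tfrac12$ and $1+(\cos\delta)^{-1}\to2$ as $\delta\to0^+$, I can fix $\delta$ so small that $\kappa:=q\bigl(1+(\cos\delta)^{-1}\bigr)<1$. Then for $\la\in\Sigma_\delta$ one has $\|\hL_1(\la-\hL_0)^{-1}\|\le q\,\|\hL_0(\la-\hL_0)^{-1}\|\le\kappa<1$, so $I-\hL_1(\la-\hL_0)^{-1}$ is boundedly invertible and
\[
(\la-\hL)^{-1}=(\la-\hL_0)^{-1}\bigl(I-\hL_1(\la-\hL_0)^{-1}\bigr)^{-1}
\]
exists on $\Sigma_\delta$ with $\|\la(\la-\hL)^{-1}\|\le\tfrac{1}{(1-\kappa)\cos\delta}$. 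This uniform sectorial resolvent bound is precisely the characterization of the generator of a bounded holomorphic semigroup, so $(\hL,\D)$ generates a holomorphic semigroup $\hT(t)$ on $\L_C$. I expect the main obstacle to be the third step: the careful bookkeeping with absolute values in the two combinatorial identities, and especially the extraction of $(a_1-1)$ rather than $a_1$ from the death term, which is exactly what makes the threshold $\tfrac32$ in \eqref{asmall} (equivalently $q<\tfrac12$) the right one for preserving holomorphy rather than mere strong continuity.
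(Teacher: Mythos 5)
Your proposal is correct and follows essentially the same route as the paper: the same splitting $\hL=\hL_0+\hL_1$ with $\hL_0$ the multiplication by $-D(\cdot)$, the same sectorial resolvent estimate $\|R(z,\hL_0)\|\le(|z|\cos\omega)^{-1}$, and the same relative bound $\|\hL_1G\|_C\le\bigl(a_1-1+\tfrac{a_2}{C}\bigr)\|\hL_0G\|_C$ obtained via Lemma~\ref{Minlos} and \eqref{laMecke}, with the key extraction of the $\eta=\emptyset$ term giving $a_1-1$ rather than $a_1$. The only difference is presentational: you run the Neumann-series resolvent construction explicitly where the paper invokes the perturbation theorem \cite[Theorem III.2.10]{EN2000}, whose proof is exactly that argument.
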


\begin{remark}
  Having in mind Remark \ref{hatL-matrix} one can say that the idea of the proof is to show that the multiplication part of the diagonal operator \eqref{diag-oper} will dominates on the rest part of the operator matrix $\bigl(\widehat{L}_{n,m}\bigr)$ provided the conditions \eqref{est-d}, \eqref{est-b} hold. Note also that, by~\eqref{LP-meas-def}, \eqref{Ga0}, \eqref{speccases}, \eqref{mult}, the l.h.s of \eqref{est-d} is equal to
  \[
  D(\xi) + \sum_{x\in\xi}\int_{\Ga _{0}\setminus \{\emptyset\}}\left\vert K^{-1}d\left( x,\cdot
\cup \xi \setminus x\right) \right\vert \left( \eta \right)
C^{\left\vert \eta \right\vert }d\la \left( \eta \right).
  \]
  This is the reason to demand that $a_1$ should be not less than $1$.
\end{remark}
\begin{proof}[Proof of Theorem \ref{th1}]
Let us consider the multiplication operator $\left( L_{0},\D\right)
$ on $ \L_{C}$ given by
\begin{equation} (L_{0}G)(\eta )=-D\left( \eta \right) G(\eta
),\quad G\in \D,\ \ \eta \in \Ga _{0}. \label{L0oper}
\end{equation}
We recall that a~densely defined closed operators $A$ on
$\L_{C}$ is called sectorial of angle $\omega\in
(0,\,\frac{ \pi }{2})$ if its resolvent set $\rho (A)$ contains the
sector
\begin{equation*}
\mathrm{Sect}\left( \frac{\pi }{2}+\omega \right) :=\left\{ z \in
\mathbb{C}\Bigm||\arg z |<\frac{\pi }{2}+\omega \right\} \setminus\{0\}
\end{equation*} and for each $\eps\in(0;\omega)$ there exists
$M_\eps\geq 1$ such that
\begin{equation}\label{resbound}
||R(z,A)||\leq \frac{M_{\eps }}{|z|}
\end{equation}
for all $z\neq 0$ with $|\arg z |\leq \dfrac{\pi }{2}+\omega -\eps.$
Here and below we will use notation
\[
R(z,A):=(z\1 -A)^{-1}, \quad z\in\rho(A).
\]
The set of all sectorial operators of angle $\omega\in (0,\,\frac{
\pi }{2})$ in $\L_C$ we denote by $\mathcal{H} _{C}(\omega
)$. Any $A\in\mathcal{H} _{C}(\omega )$ is a~generator of a~bounded
semigroup $T(t)$ which is holomorphic in the sector $|\arg
\,t|<\omega $ (see e.g. \cite[Theorem II.4.6]{EN2000}). One can
prove the following lemma.
\begin{lemma}\label{lem1}
The operator $\left( L_{0},\D\right) $ given by \eqref{L0oper} is a
generator of a~contraction semigroup on $\L_{C}.$ Moreover,
$L_{0}\in \mathcal{H}_{C}(\omega)$ for all $\omega \in (0,\,\frac{
\pi }{2})$ and \eqref{resbound} holds with
$M_\eps=\frac{1}{\cos\omega}$ for all $\eps\in(0;\omega)$.
\end{lemma}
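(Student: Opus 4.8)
The plan is to treat $(L_0,\D)$ as precisely what it is, namely the operator of multiplication by the non-positive function $-D(\cdot)$ on the weighted $L^1$-space $\L_{C}$, see \eqref{space1}, \eqref{mult}, \eqref{L0oper}, and to read off its resolvent and semigroup explicitly. First I would record the two structural facts needed to invoke the characterization of sectorial operators (e.g.\ \cite[Theorem II.4.6]{EN2000}): the domain $\D$ is dense, since $\Bbs\subset\D$ and $\Bbs$ is dense in $\L_{C}$; and $L_0$ is closed, by the standard argument that if $G_n\to G$ and $L_0G_n\to H$ in $\L_{C}$, then a subsequence converges $\la$-a.e., forcing $-DG=H$ and hence $G\in\D$. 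For $z\notin(-\infty,0]$ the candidate resolvent is the multiplication operator $R(z)G:=(z+D(\cdot))^{-1}G$; since on an $L^1$-space the operator norm of a multiplication operator is the essential supremum of its multiplier, one checks that $R(z)=R(z,L_0)$ with $\|R(z,L_0)\|=\esssup_{\eta\in\Ga_0}\,|z+D(\eta)|^{-1}$.

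The crux, and the only step requiring care, is the resolvent bound on the sector. Fix $\omega\in(0,\pi/2)$ and $\eps\in(0,\omega)$ and write $z=re^{i\theta}$ with $|\theta|\le\frac{\pi}{2}+\omega-\eps$. Since $D\ge0$ is real, $|z+D|^2=r^2+2rD\cos\theta+D^2$; minimizing this over $D\ge0$ gives $|z+D|\ge r$ when $\cos\theta\ge0$, and $|z+D|\ge r|\sin\theta|$ when $\cos\theta<0$. In the second case $\frac{\pi}{2}<|\theta|\le\frac{\pi}{2}+\omega-\eps$, whence $|\sin\theta|\ge\cos(\omega-\eps)$; combining the two cases, $|z+D|\ge r\cos(\omega-\eps)\ge r\cos\omega=|z|\cos\omega$ for every $D\ge0$. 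Therefore $\|R(z,L_0)\|\le\frac{1}{|z|\cos\omega}$, which is exactly \eqref{resbound} with $M_\eps=\frac{1}{\cos\omega}$; in particular the whole sector $\Sect(\frac{\pi}{2}+\omega)$ lies in $\rho(L_0)$, so $L_0\in\mathcal{H}_{C}(\omega)$, and since $\omega$ was arbitrary this holds for all $\omega\in(0,\frac{\pi}{2})$.

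Finally, specializing the same estimate to real $z>0$ gives the sharper $\|R(z,L_0)\|\le 1/z$ (here $|z+D|=z+D\ge z$), so the Hille--Yosida theorem yields that $L_0$ generates a contraction $C_0$-semigroup; one identifies it explicitly as $(T(t)G)(\eta)=e^{-tD(\eta)}G(\eta)$, whose contractivity is immediate from $0\le e^{-tD}\le1$ and whose strong continuity follows by dominated convergence with majorant $|G(\cdot)|C^{|\cdot|}$. I do not expect a genuine obstacle here, as the operator is merely a multiplication operator; the only delicate point is the geometric estimate above, specifically keeping track of the sign of $\cos\theta$ so as to cover arguments $|\theta|>\frac{\pi}{2}$, where the naive bound $|z+D|\ge|z|$ fails and one must instead invoke $|z+D|\ge|z||\sin\theta|$.
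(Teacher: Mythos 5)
Your proposal is correct and follows essentially the same route as the paper: realize $L_0$ as multiplication by $-D(\cdot)$, compute the resolvent explicitly, bound $|z+D(\eta)|$ from below by $|z|\cos\omega$ on the sector via the case distinction on the sign of $\Re z$ (your explicit minimization over $D\ge 0$ is just a slightly more careful version of the paper's inequality $|D+z|\ge|\Im z|\ge|z|\cos\omega$), and conclude the contraction property from $\|R(z,L_0)\|\le 1/\Re z$ and Hille--Yosida. The explicit identification of the semigroup as multiplication by $e^{-tD(\cdot)}$ is a harmless addition not present in the paper.
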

\begin{proof}[Proof of Lemma~\ref{lem1}]
It is not difficult to show that the densely defined operator
$L_{0}$ is closed in~$\L_{C}$. Let $0<\omega <\frac{\pi
}{2}$ be arbitrary and fixed. Clear, that for all $z \in
\mathrm{Sect}\left( \frac{\pi }{2} +\omega \right) $
\begin{equation*}
\bigl|D\left( \eta \right) +z \bigr|>0,\quad \eta \in \Ga _{0}.
\end{equation*} Therefore, for any $z \in \mathrm{Sect}\left( \frac{\pi }{2}+\omega
\right) $ the inverse operator $R(z,L_{0})=(z\1
- L_{0})^{-1}$, the action of
which is given by
\begin{equation}
\bigl( R(z,L_{0})G\bigr)(\eta )=\frac{1}{D\left( \eta \right) +z
}\,G(\eta ), \label{necf}
\end{equation} is well defined on the whole space $\L_{C}$. Moreover,
\[
|D(\eta)+z|=\sqrt{(D(\eta)+\Re z)^2+(\Im
z)^2}\geq
\begin{cases}
|z|, &\mathrm{if} \ \Re z\geq 0\\
|\Im\,z|, &\mathrm{if} \ \Re z<0
\end{cases},
\]
and for any $z\in\Sect\left(\frac{\pi}{2}+\omega\right)$
\[
|\Im\,z|=|z| |\sin \arg z|\geq|z|\left|\sin\left(\frac{\pi}{2}+\omega\right)\right|=|z|\cos\omega.
\]
As a~result,
 for any $z\in\Sect\left(\frac{\pi}{2}+\omega\right)$\begin{equation}\label{resbound-ex}
||R(z,L_{0})||\leq \frac{1}{|z |\cos\omega},
\end{equation}
that implies the second assertion. Note also that
$|D(\eta)+z|\geq\Re z$ for $\Re z>0$, hence,
\begin{equation}\label{HY}
||R(z,L_{0})||\leq
\frac{1}{\Re z},
\end{equation}
that proves the first statement by the classical
Hille--Yosida theorem.
\end{proof}

For any $G\in \Bbs $ we define
\begin{equation}
    \begin{split}
\left( L_{1}G\right) \left( \eta \right)
:=&\,(\widehat{L}G)(\eta)-(L_0G)(\eta)\\=&-\sum_{\xi\subsetneq \eta
}G(\xi )\sum_{x\in \xi }\bigl(K^{-1}d(x,\cdot\cup\xi\setminus
x)\bigr)(\eta\setminus
\xi)\\
&+\sum_{\xi \subset \eta }\int_{\R^{d}}\,G(\xi \cup
x)\bigl(K^{-1}b(x,\cdot\cup\xi)\bigr)(\eta\setminus \xi)
dx.
\end{split}\label{operL1def}
\end{equation}

Next Lemma shows that, under conditions \eqref{est-d}, \eqref{est-b}
above, the operator $L_1$ is relatively bounded by the operator
$L_0$.
\begin{lemma}\label{lem2}
Let \eqref{est-d}, \eqref{est-b} hold. Then $(L_1,\D)$ is a
well-defined operator in $\L_C$ such that
\begin{equation}
\left\Vert L_{1}R(z,L_{0})\right\Vert \leq a_1-1+\frac{a_2}{C}, \quad \Re z>0\label{relbound}
\end{equation}
and
\begin{equation}\label{deep}
\|L_1 G\|\leq \Bigl(a_1-1+\frac{a_2}{C}\Bigr)\|L_0G\|,
\quad G\in\D.
\end{equation}
\end{lemma}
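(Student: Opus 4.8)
The plan is to establish the graph-norm bound \eqref{deep} on the core $\Bbs$ by a direct estimate of the $\L_C$-norm, and then to read off from it both the well-definedness of $(L_1,\D)$ and the resolvent bound \eqref{relbound}. Throughout I use that, by \eqref{L0oper}, $\|L_0G\|=\int_{\Ga_0}D(\eta)\,|G(\eta)|\,C^{|\eta|}\,d\la(\eta)$. To prove \eqref{deep} I would start from \eqref{operL1def} and apply the triangle inequality to split $\|L_1G\|$ into a death contribution $I_d$ and a birth contribution $I_b$ coming from the two sums.

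For the death part,
\[
I_d\le\int_{\Ga_0}\sum_{\xi\subsetneq\eta}|G(\xi)|\sum_{x\in\xi}\bigl|(K^{-1}d(x,\cdot\cup\xi\setminus x))(\eta\setminus\xi)\bigr|\,C^{|\eta|}\,d\la(\eta),
\]
and the key step is to apply the Minlos identity \eqref{minlosid} with $\eta\setminus\xi$ promoted to an independent variable, writing $C^{|\eta|}=C^{|\xi|}C^{|\eta\setminus\xi|}$. Because the sum runs over strict subsets $\xi\subsetneq\eta$, the freed variable ranges over $\Ga_0\setminus\{\emptyset\}$, so the inner integral is the left-hand side of \eqref{est-d} with its $\eta=\emptyset$ term removed. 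That removed term equals $\sum_{x\in\xi}d(x,\xi\setminus x)=D(\xi)$, so by \eqref{est-d} the inner integral is bounded by $a_1D(\xi)-D(\xi)=(a_1-1)D(\xi)$; integrating against $|G(\xi)|C^{|\xi|}$ yields $I_d\le(a_1-1)\|L_0G\|$.

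For the birth part I would first apply \eqref{minlosid} again, obtaining
\[
I_b\le\int_{\Ga_0}\int_{\X}|G(\xi\cup x)|\,C^{|\xi|}\Bigl(\int_{\Ga_0}\bigl|(K^{-1}b(x,\cdot\cup\xi))(\eta)\bigr|\,C^{|\eta|}\,d\la(\eta)\Bigr)\,dx\,d\la(\xi).
\]
The crucial manoeuvre is to run the $\la$-Mecke identity \eqref{laMecke} in reverse: the pair $(x,\xi)$ together with $\int_\X\cdots dx$ recombines into $\zeta:=\xi\cup x$ with a sum $\sum_{x\in\zeta}$, turning $G(\xi\cup x)$ into $G(\zeta)$ and $C^{|\xi|}=C^{|\zeta\setminus x|}$ into $C^{-1}C^{|\zeta|}$. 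The remaining inner sum–integral is precisely the left-hand side of \eqref{est-b} for $\zeta$, which is $\le a_2D(\zeta)$, so $I_b\le\tfrac{a_2}{C}\|L_0G\|$. Adding the two estimates gives \eqref{deep} on $\Bbs$; on this class all supports are bounded, so the Minlos and Mecke manipulations and Fubini are justified exactly as in the proof of Proposition~\ref{propLtriangle}. Since $\Bbs$ is dense in $\D$ in the graph norm of $L_0$, the bound \eqref{deep} extends to all $G\in\D$, which in particular shows $L_1G\in\L_C$ and that $(L_1,\D)$ is well defined.

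Finally, \eqref{relbound} follows by applying \eqref{deep} to $R(z,L_0)G$ (which lies in $\D$ for any $G\in\L_C$), together with the observation that $\|L_0R(z,L_0)\|\le1$ for $\Re z>0$: by \eqref{necf}, $(L_0R(z,L_0)G)(\eta)=-\tfrac{D(\eta)}{D(\eta)+z}G(\eta)$, and $|D(\eta)+z|\ge D(\eta)+\Re z\ge D(\eta)$ gives the pointwise factor $\le1$. I expect the birth term to be the main obstacle: one must correctly combine the Minlos splitting with the reverse Mecke identity to reabsorb the shifted argument $G(\cdot\cup x)$ into a $D$-weighted integral and to extract the $C^{-1}$ factor, and one must track the strict-subset restriction in the death sum, which is exactly what converts $a_1$ into $a_1-1$.
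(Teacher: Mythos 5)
Your proof is correct and follows essentially the same route as the paper: the same splitting into a death and a birth contribution, the same application of the Minlos identity \eqref{minlosid} (with the strict-subset restriction producing the $a_1-1$) and of the reverse $\la$-Mecke identity \eqref{laMecke} (producing the $a_2/C$). The only difference is organizational --- the paper inserts the weight $1/(z+D(\xi))$ directly into the integrals to get \eqref{relbound} and then notes \eqref{deep} follows by the same computation, whereas you prove \eqref{deep} first and deduce \eqref{relbound} from $\Vert L_0R(z,L_0)\Vert\leq 1$; both orderings are equally valid.
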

\begin{proof}[Proof of Lemma~\ref{lem2}]
By Lemma~\ref{Minlos}, we have for any $G\in \L_{C}$, $\Re
z>0$
\begin{align*}
&\int_{\Ga _{0}}\biggl\vert -\sum_{\xi\subsetneq \eta
}\frac{1}{z+D(\xi)}G(\xi )\sum_{x\in \xi
}\bigl(K^{-1}d(x,\cdot\cup\xi\setminus x)\bigr)(\eta\setminus
\xi)\biggr\vert C^{\left\vert \eta \right\vert }d\la
\left( \eta \right) \\
\leq &\int_{\Ga _{0}}\sum_{\xi \subsetneq \eta
}\frac{1}{|z+D(\xi)|}\left\vert G(\xi )\right\vert \sum_{x\in \xi
}\bigl|K^{-1}d(x,\cdot\cup\xi\setminus x)\bigr|(\eta\setminus \xi)
C^{\left\vert \eta \right\vert
}d\la \left( \eta \right) \\
=&\int_{\Ga _{0}}\frac{1}{|z+D(\xi)|}\left\vert G(\xi )\right\vert
\sum_{x\in \xi }\int_{\Ga
_{0}}\bigl|K^{-1}d(x,\cdot\cup\xi\setminus
x)\bigr|(\eta)C^{\left\vert \eta \right\vert }d\la \left( \eta
\right) C^{\left\vert \xi \right\vert
}d\la \left( \xi \right) \\
&-\int_{\Ga _{0}}\frac{1}{|z+D(\eta)|}D\left( \eta \right)
\left\vert G(\eta )\right\vert C^{\left\vert \eta \right\vert }d\la
\left( \eta
\right)\\
\leq&(a_{1}-1)\int_{\Ga _{0}} \frac{1}{\Re z+D(\eta)}
D(\eta)|G(\eta)|C^{|\eta|} d\la(\eta)\leq(a_1-1)\|G\|_C,
\end{align*} and \begin{align*}
&\int_{\Ga _{0}}\biggl\vert \sum_{\xi \subset \eta }\int_{\R
^{d}}\,\frac{1}{z+D(\xi\cup x)}G(\xi \cup
x)\bigl(K^{-1}b(x,\cdot\cup\xi)\bigr) (\eta\setminus
\xi)dx\biggr\vert C^{\left\vert \eta \right\vert }d\la
\left( \eta \right) \\
\leq &\int_{\Ga _{0}}\int_{\Ga
_{0}}\int_{\R^{d}}\,\frac{1}{|z+D(\xi\cup x)|}\left\vert G(\xi \cup
x)\right\vert \left\vert K^{-1}b(x,\cdot\cup\xi) \right\vert
(\eta)dxC^{\left\vert \eta \right\vert }C^{\left\vert \xi
\right\vert }d\la \left( \xi \right) d\la \left(
\eta \right) \\
=&\frac{1}{C}\int_{\Ga _{0}}\,\frac{1}{|z+D(\xi)|}\left\vert G(\xi
)\right\vert \sum_{x\in \xi }\int_{\Ga _{0}} \left\vert
K^{-1}b(x,\cdot\cup\xi\setminus x) \right\vert (\eta)
C^{\left\vert \eta \right\vert }d\la \left( \eta \right)
C^{\left\vert \xi \right\vert }d\la \left( \xi \right) \\
\leq&\frac{a_{2}}{C}\int_{\Ga _{0}}\,\frac{1}{\Re
z+D(\xi)}\left\vert G(\xi )\right| D(\xi) C^{\left\vert \xi
\right\vert }d\la \left( \xi \right)\leq\frac{a_2}{C}\|G\|_C.
\end{align*}
Combining these inequalities we obtain \eqref{relbound}.
The same considerations yield
\begin{align*}
&\int_{\Ga _{0}}\biggl\vert -\sum_{\xi\subsetneq \eta }G(\xi
)\sum_{x\in \xi }\bigl(K^{-1}d(x,\cdot\cup\xi\setminus
x)\bigr)(\eta\setminus \xi)\biggr\vert C^{\left\vert \eta
\right\vert }d\la
\left( \eta \right) \\
&\quad+\int_{\Ga _{0}}\biggl\vert \sum_{\xi \subset \eta }\int_{\R
^{d}}\,G(\xi \cup x)\bigl(K^{-1}b(x,\cdot\cup\xi)\bigr)
(\eta\setminus \xi)dx\biggr\vert C^{\left\vert \eta \right\vert
}d\la
\left( \eta \right) \\
\leq&\left((a_{1}-1)+\frac{a_{2}}{C}\right)\int_{\Ga
_{0}}\,\left\vert G(\eta )\right| D(\eta) C^{\left\vert \eta
\right\vert }d\la \left( \eta \right),
\end{align*}
that proves \eqref{deep} as well.
\end{proof}

And now we proceed to finish the proof of the
Theorem~\ref{th1}. Let us set
\[
    \theta:=a_1+\frac{a_2}{C}-1\in\bigl(0;\frac{1}{2}\bigr).
\]
Then
$\frac{\theta}{1-\theta}\in(0;1)$. Let $\omega\in
\bigl(0;\frac{\pi}{2}\bigr)$ be such that
$\cos\omega<\frac{\theta}{1-\theta}$. Then, by the proof of
Lemma~\ref{lem1}, $L_{0}\in \mathcal{H}_{C}(\omega)$ and
$||R(z,L_0)||\leq \frac{M}{|z|}$ for all $z\neq 0$ with $|\arg z
|\leq \dfrac{\pi }{2}+\omega $, where $M:=\frac{1}{\cos \omega}$.
Then
$$\theta=\frac{1}{1+\frac{1-\theta}{\theta}}<\frac{1}{1+\frac{1}{\cos\omega}}=\frac{1}{1+M}.$$
Hence, by \eqref{deep} and the proof of \cite[Theorem
III.2.10]{EN2000}, we have that $(\widehat{L}=L_0+L_1,\D)$ is a
generator of holomorphic semigroup on $\L_C$.
\end{proof}

\begin{remark}
By \eqref{mult}, the estimates \eqref{est-d}, \eqref{est-b} are
satisfied if
\begin{align}
\int_{\Ga _{0}}\left\vert K^{-1}d\left( x,\cdot \cup \xi \right)
\right\vert \left( \eta \right) C^{\left\vert \eta \right\vert }d\la
\left( \eta \right) \leq & a_{1} d\left( x,\xi \right) , \label{est-d1} \\
\int_{\Ga _{0}}\left\vert K^{-1}b\left( x,\cdot \cup \xi \right)
\right\vert \left( \eta \right) C^{\left\vert \eta \right\vert }d\la
\left( \eta \right) \leq &a_{2}d\left( x,\xi \right) .
\label{est-b1}
\end{align}
\end{remark}

\subsection{Evolutions in the space of correlation
functions}\label{subsect-evol-cf}

In this Subsection we will use the semigroup $\widehat{T}(t)$ acting oh
the space of quasi-observables for a construction of solution to the
evolution equation \eqref{QE} on the space of correlation functions.

We denote $d\la _{C}:=C^{|\cdot |}d\la $; and the dual space $(
\L_{C})^{\prime }=\bigl(L^{1}(\Ga _{0},d\la _{C})\bigr)
^{\prime }=L^{\infty }(\Ga _{0},d\la _{C})$. As was mentioned before the space $(\L
_{C})^{\prime }$ is isometrically isomorphic to the Banach space $\K_C$ considered in \eqref{KC}--\eqref{ineqKC}. The isomorphism is given by the isometry $R_{C}$
\begin{equation}
(\L_{C})^{\prime }\ni k\longmapsto R_{C}k:=k\cdot C^{|\cdot
|}\in { \K}_{C}. \label{isometry}
\end{equation}

Recall, one may consider the duality between the Banach spaces
$\L _{C}$ and ${\K}_{C}$ given by \eqref{duality} with
\[
\left\vert \left\langle \!\left\langle G,k\right\rangle
\!\right\rangle \right\vert \leq \Vert G\Vert _{C}\cdot \Vert k\Vert
_{{ \K}_{C}}.
\]

Let $\bigl({\widehat{L}}^{\prime },\Dom({\widehat{L}}^{\prime })\bigr)$ be
an operator in $(\L_{C})^{\prime }$ which is dual to the
closed operator $\bigl( {\widehat{L}},\D\bigr)$. We consider also its
image on ${\K} _{C}$ under the isometry $R_{C}$. Namely, let
${\widehat{L}}^{\ast }=R_{C}{\widehat{L }}^{\prime }R_{C^{-1}}$ with the
domain $\Dom({\widehat{L}}^{\ast })=R_{C}\Dom({\widehat{L}}^{\prime })$.
Similarly, one can consider the adjoint semigroup $\widehat{T}^{\prime
}(t)$ in $(\L_{C})^{\prime }$ and its image $\widehat{T}^{\ast
}(t)$ in ${\K}_{C}$.

The space $\L_C$ is not reflexive,
hence, $\widehat{T}^{\ast }(t)$ is not $C_0$-semigroup in whole ${\K}_{C}$.
By e.g. \cite[Subsection~II.2.5]{EN2000}, the last
semigroup will be weak*-continuous, weak*-differentiable at $0$ and
${\widehat{L}}^\ast$ will be weak*-generator of ${\widehat{T}}^\ast(t)$.
Therefore, one has an evolution in the space of correlation
functions. In fact, we have a~solution to the evolution equation
\eqref{QE}, in a~weak*-sense. This subsection is devoted to the
study of a~classical solution to this equation. By~e.g.~\cite[Subsection~II.2.6]{EN2000}, the restriction
$\widehat{T}^{\odot }(t)$ of the semigroup $\widehat{T}^{\ast }(t)$ onto its
invariant Banach subspace $\overline{ \Dom({\widehat{L}}^{\ast })}$
(here and below all closures are in the norm of the space
${\K}_{C}$) is a~strongly continuous semigroup. Moreover, its
generator ${\widehat{L}} ^\odot$ will be a~part of ${\widehat{L}}^\ast$,
namely,
\begin{equation}\label{domLsundual}
    \Dom({\widehat{L}} ^\odot)=\Bigl\{k\in \Dom({\widehat{L}}^\ast) \Bigm|
{\widehat{L}}^\ast k\in \overline{\Dom({\widehat{L}}^\ast)}\Bigr\}
\end{equation}
and ${\widehat{L} }^\odot k ={\widehat{L}}^\ast k$ for any $k\in
\Dom({\widehat{L}}^\odot)$.

\begin{proposition}\label{pr3}
 Let \eqref{est-d}, \eqref{est-b} be satisfied. Suppose that there exists
$A>0$, $N\in \N_{0}$, $\nu\geq 1$ such that for $\xi \in \Ga _{0}$
and $x\notin \xi $
\begin{equation}
d\left( x,\xi \right) \leq
A(1+\left\vert \xi \right\vert) ^{N}\nu^{\left\vert \xi \right\vert
}, \qquad \label{D-bdd}
\end{equation} Then for any $\alpha \in \left( 0;\frac{1}{\nu}\right) $
\begin{equation}\label{incldom}
\K_{\alpha C}\subset \Dom({\widehat{L}}^{\ast }).
\end{equation}
\end{proposition}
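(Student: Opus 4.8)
The plan is to exhibit, for each $k\in\K_{\alpha C}$, an explicit candidate for $\widehat{L}^\ast k$ through formula \eqref{defLtriangle}, to check that this candidate lies in $\K_C\cong(\L_C)'$, and then to verify that it really represents the dual action of $(\widehat{L},\D)$; membership $k\in\Dom(\widehat{L}^\ast)$ then follows from the definition of a dual operator. First I would record the elementary normalizations. Since $\nu\geq1$ and $\alpha\in(0,\tfrac1\nu)$, we have $\alpha\nu\in(0,1)$ and $\alpha<1$; the latter gives $(\alpha C)^{|\zeta|}\leq C^{|\zeta|}$, while the former produces the finite constant
\[
B:=\sup_{n\in\N_0}(1+n)^{N+1}(\alpha\nu)^{n}<\infty ,
\]
which is precisely where the \emph{strict} bound $\alpha<\tfrac1\nu$ is used (for $\alpha=\tfrac1\nu$ this supremum diverges). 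Also note $\K_{\alpha C}\subset\K_C$, so $k$ defines a bounded functional on $\L_C$, and I write $\|k\|:=\|k\|_{\K_{\alpha C}}$.

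Next I would estimate the ``full energy of death'' pointwise. By \eqref{D-bdd}, for $x\in\eta$ one has $d(x,\eta\setminus x)\leq A|\eta|^N\nu^{|\eta|-1}$, hence $D(\eta)\leq A|\eta|^{N+1}\nu^{|\eta|-1}$ and therefore $(\alpha C)^{|\eta|}D(\eta)\leq A\nu^{-1}B\,C^{|\eta|}$. Now I would bound the two sums in \eqref{defLtriangle}. In the death part I estimate $|k(\zeta\cup\eta)|\leq\|k\|(\alpha C)^{|\eta|}(\alpha C)^{|\zeta|}$ by \eqref{ineqKC}, pull out $(\alpha C)^{|\eta|}$, replace $(\alpha C)^{|\zeta|}$ by $C^{|\zeta|}$, and apply \eqref{est-d} with $\xi=\eta$ to gain a factor $a_1D(\eta)$; combined with the bound on $(\alpha C)^{|\eta|}D(\eta)$ this gives at most $\|k\|a_1A\nu^{-1}B\,C^{|\eta|}$. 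In the birth part I use $|k(\zeta\cup(\eta\setminus x))|\leq\|k\|(\alpha C)^{|\eta|-1}(\alpha C)^{|\zeta|}$ and \eqref{est-b} with $\xi=\eta$, obtaining at most $\|k\|a_2A(\alpha C\nu)^{-1}B\,C^{|\eta|}$. Summing, $|(\widehat{L}^\ast k)(\eta)|\leq\const\cdot C^{|\eta|}$, so the candidate belongs to $\K_C$; the same bounds show each integral in \eqref{defLtriangle} is absolutely convergent for every $\eta$.

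It then remains to confirm that this candidate is genuinely dual to $\widehat{L}$. For $G\in\Bbs$ I would start from $\langle\!\langle\widehat{L}G,k\rangle\!\rangle=\int_{\Ga_0}(\widehat{L}G)(\eta)k(\eta)\,d\la(\eta)$ and repeat the computation of Proposition~\ref{propLtriangle}: expand $\widehat{L}G$ via \eqref{newexpr} and apply Minlos' identity \eqref{minlosid} together with \eqref{laMecke}. Since now $k\notin\Bbs$, the only point requiring fresh justification is the absolute convergence of the rearranged multiple integrals. But the estimates above, carried out with the extra factor $|G(\xi)|C^{|\xi|}$ (which is $\la$-integrable because $G\in\Bbs\subset\L_C$), dominate every such integral by $\const\cdot\|G\|_C<\infty$, so the Minlos lemma and Fubini apply exactly as before. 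This yields $\langle\!\langle\widehat{L}G,k\rangle\!\rangle=\langle\!\langle G,\widehat{L}^\ast k\rangle\!\rangle$ for all $G\in\Bbs$, with $\widehat{L}^\ast k$ the candidate of \eqref{defLtriangle}.

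Finally I would upgrade this identity from $\Bbs$ to the full domain $\D$. As $\widehat{L}^\ast k\in\K_C=(\L_C)'$, the above gives $|\langle\!\langle\widehat{L}G,k\rangle\!\rangle|\leq\|G\|_C\,\|\widehat{L}^\ast k\|_{\K_C}$ for $G\in\Bbs$. To extend it, I would argue that $\Bbs$ is a core for $(\widehat{L},\D)$: it is a core for the multiplication operator $L_0$ of \eqref{L0oper}, since truncating $G\in\D$ to $G\,\1_{\{|G|\leq n\}}\1_{M_n}$ with $M_n\uparrow\Ga_0$ bounded gives elements of $\Bbs$ with $G_n\to G$ and $D(\cdot)G_n\to D(\cdot)G$ in $\L_C$ by dominated convergence; and since $L_1$ is $L_0$-bounded with relative bound below $1$ (Lemma~\ref{lem2} together with \eqref{asmall}), a core for $L_0$ is a core for $\widehat{L}=L_0+L_1$. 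Approximating $G\in\D$ by such $G_n\in\Bbs$ with $\widehat{L}G_n\to\widehat{L}G$ in $\L_C$ and using continuity of the pairing against $k\in(\L_C)'$, both the identity and the bound pass to all of $\D$. Thus $G\mapsto\langle\!\langle\widehat{L}G,k\rangle\!\rangle$ is $\L_C$-continuous on $\Dom(\widehat{L})$, which is exactly $k\in\Dom(\widehat{L}^\ast)$. I expect the main obstacle to be not the final estimate but the bookkeeping of the third step — making the combinatorial Minlos/Fubini rearrangement rigorous for the merely sub-Poissonian $k$, where the decisive gain is the geometric factor $(\alpha\nu)^{|\eta|}$ that tames the polynomial-times-$\nu^{|\eta|}$ growth of $D$.
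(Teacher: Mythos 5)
Your proposal is correct and follows essentially the same route as the paper: both exhibit the candidate $\widehat{L}^{\ast}k$ via \eqref{defLtriangle} and show it lies in $\K_C$ by combining \eqref{ineqKC}, \eqref{est-d}, \eqref{est-b}, \eqref{D-bdd} and the elementary bound on $(1+n)^{N+1}(\alpha\nu)^{n}$ (the paper's inequality \eqref{bdd} is exactly your constant $B$). The only difference is that you spell out the duality verification — Fubini/Minlos for sub-Poissonian $k$ and the extension from $\Bbs$ to all of $\D$ via a core argument resting on Lemma~\ref{lem2} — where the paper dispatches this step by reference to the calculation in Proposition~\ref{propLtriangle}; this is added rigor, not a different method.
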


\begin{proof}
In order to show \eqref{incldom} it is enough to verify that for any
$k\in\K_{\alpha C}$ there exists $k^*\in\K_C$ such that for any
$G\in\Dom(\widehat{L})$
\begin{equation}\label{pairdualneed}
\bigl\langle \!\bigl\langle \widehat{L} G,\,k\bigr\rangle \!\bigr\rangle
=\left\langle \!\left\langle G,\,k^*\right\rangle \!\right\rangle.
\end{equation}
By the same calculations as in the proof of Proposition~\ref{propLtriangle}, it is easy to see that \eqref{pairdualneed} is valid for any
$k\in\K_{\alpha C}$ with $k^*=\widehat{L}^{\ast }k$, where $\widehat{L}^{\ast }$ is given by \eqref{defLtriangle},
provided $k^*\in\K_C$.

To prove the last inclusion, one can estimate, by \eqref{est-d}, \eqref{est-b}, \eqref{D-bdd}, that
\begin{align*}
&C^{-\left\vert \eta \right\vert }\left\vert (\widehat{L}^{\ast }k)(\eta
)\right\vert \\
\leq &C^{-\left\vert \eta \right\vert }\sum_{x\in \eta }\int_{\Ga
_{0}}\!\left\vert k(\zeta \cup \eta )\right\vert \bigl|
K^{-1}d(x,\cdot\cup \eta\setminus x)\bigr|
(\zeta) d\la (\zeta ) \\
&+C^{-\left\vert \eta \right\vert }\sum_{x\in \eta }\int_{\Ga
_{0}}\!\,\left\vert k(\zeta \cup (\eta \setminus x))\right\vert
\bigl| K^{-1}b(x,\cdot\cup \eta\setminus x)\bigr| (\zeta)d\la
(\zeta
)\, \\
\leq &\left\Vert k\right\Vert _{\K_{\alpha C}}\alpha ^{\left\vert
\eta \right\vert }\sum_{x\in \eta }\int_{\Ga _{0}}\!\left( \alpha
C\right) ^{\left\vert \zeta \right\vert} \bigl|
K^{-1}d(x,\cdot\cup \eta\setminus x)\bigr|
(\zeta)d\la (\zeta ) \\
&+\frac{1}{\alpha C}\left\Vert k\right\Vert _{\K_{\alpha C}}\alpha
^{\left\vert \eta \right\vert }\sum_{x\in \eta }\int_{\Ga
_{0}}\!\,\left( \alpha C\right) ^{\left\vert \zeta \right\vert }
\bigl| K^{-1}b(x,\cdot\cup \eta\setminus x)\bigr|
(\zeta) d\la (\zeta ) \\
\leq &\,\left\Vert k\right\Vert _{\K_{\alpha C}}\left( a_1
+\frac{a_2}{\alpha C}\right)\alpha ^{\left\vert \eta \right\vert
}\sum_{x\in \eta
} d\left( x,\eta \setminus x\right) \\
\leq &\,A\left\Vert k\right\Vert _{\K_{\alpha C}}\left( a_1
+\frac{a_2}{\alpha C}\right)\alpha ^{\left\vert \eta \right\vert
}(1+\left\vert \eta \right\vert) ^{N+1}\nu^{\left\vert \eta
\right\vert -1}.
\end{align*} Using elementary inequality \begin{equation}
(1+t)^{b}a^{t}\leq \frac{1}{a}\left( \frac{b}{-e\ln a}\right)
^{b},~~b\geq 1,~a\in \left( 0;1\right) ,~t\geq 0, \label{bdd}
\end{equation} we have for $\alpha \nu <1$ \begin{equation*}
\esssup_{\eta \in \Ga _{0}}C^{-\left\vert \eta \right\vert
}\left\vert (\widehat{L}^{\ast }k)(\eta )\right\vert \leq \left\Vert
k\right\Vert _{\K_{\alpha C}}\left( a_1 +\frac{a_2}{\alpha
C}\right)\frac{A }{ \alpha \nu^2}\left( \frac{N+1}{-e\ln \left(
\alpha \nu\right) }\right) ^{N+1}<\infty.
\end{equation*}
The statement is proved.
\end{proof}

\begin{lemma}
Let \eqref{D-bdd} holds. We define
for any $\alpha\in (0;1)$
\begin{equation*}
\D_{\alpha }:\mathcal{=}\left\{ G\in \L_{\alpha
C}~|~D\left( \cdot \right) G\in \L_{\alpha C}\right\} .
\end{equation*} Then for any $\alpha\in (0;\frac{1}{\nu})$ \begin{equation}
\D\subset \L_{C}\subset \D_{\alpha }\subset
\L_{\alpha C} \label{subs}
\end{equation}
\end{lemma}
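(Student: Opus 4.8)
The plan is to prove the chain \eqref{subs} by treating its three inclusions separately; the outer two are immediate from the definitions, while the middle inclusion $\L_C\subset\D_\alpha$ carries all the content. Indeed, $\D\subset\L_C$ is built into the definition \eqref{dom} of $\D$ as a subset of $\L_C$ cut out by an extra integrability requirement, and likewise $\D_\alpha\subset\L_{\alpha C}$ holds because $\D_\alpha$ is by construction a subset of $\L_{\alpha C}$. So I would spend no effort on these and concentrate entirely on showing that every $G\in\L_C$ lies in $\D_\alpha$, that is, that both $G$ and $D(\cdot)G$ belong to $\L_{\alpha C}$.

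First I would record the harmless inclusion $\L_C\subset\L_{\alpha C}$: since $\alpha\in(0,1)$ we have $\alpha^{|\eta|}\leq1$, hence $(\alpha C)^{|\eta|}\leq C^{|\eta|}$ and therefore $\Vert G\Vert_{\alpha C}\leq\Vert G\Vert_C$ for every $G$. This settles the membership $G\in\L_{\alpha C}$ and reduces the lemma to the single bound $D(\cdot)G\in\L_{\alpha C}$, i.e.\ to estimating $\int_{\Ga_0}D(\eta)\left\vert G(\eta)\right\vert(\alpha C)^{|\eta|}\,d\la(\eta)$.

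The key step is a pointwise control of the weight $D(\eta)(\alpha C)^{|\eta|}$. Using \eqref{mult} together with the death-rate bound \eqref{D-bdd} applied to $\xi=\eta\setminus x$ (so that $\left\vert\eta\setminus x\right\vert=|\eta|-1$), I would obtain $D(\eta)\leq A(1+|\eta|)^{N+1}\nu^{|\eta|-1}$ for all $\eta\in\Ga_0$, and consequently
\begin{equation*}
D(\eta)(\alpha C)^{|\eta|}\leq\frac{A}{\nu}(1+|\eta|)^{N+1}(\alpha\nu)^{|\eta|}C^{|\eta|}.
\end{equation*}
Here the slack between the weights $\alpha C$ and $C$ produces the geometric factor $(\alpha\nu)^{|\eta|}$, and the hypothesis $\alpha<\frac1\nu$ is used precisely to ensure $\alpha\nu\in(0,1)$.

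Finally I would let this geometric decay absorb the polynomial factor uniformly in $\eta$: applying the elementary inequality \eqref{bdd} with $a=\alpha\nu$, $b=N+1$ and $t=|\eta|$ yields a finite constant $M=\frac{A}{\alpha\nu^2}\bigl(\frac{N+1}{-e\ln(\alpha\nu)}\bigr)^{N+1}$ such that $D(\eta)(\alpha C)^{|\eta|}\leq M\,C^{|\eta|}$ for every $\eta\in\Ga_0$. Integrating against $\left\vert G\right\vert$ then gives $\int_{\Ga_0}D(\eta)\left\vert G(\eta)\right\vert(\alpha C)^{|\eta|}\,d\la(\eta)\leq M\Vert G\Vert_C<\infty$, so $D(\cdot)G\in\L_{\alpha C}$ and hence $G\in\D_\alpha$, completing the middle inclusion. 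I do not expect a genuine obstacle here: this is the same trade-off between the polynomial growth of $D$ and the exponential gain $\alpha\nu<1$ already exploited in the proof of Proposition~\ref{pr3}, and the only point requiring care is keeping track of the shift $\left\vert\eta\setminus x\right\vert=|\eta|-1$ when invoking \eqref{D-bdd}.
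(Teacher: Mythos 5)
Your proposal is correct and follows essentially the same route as the paper: the outer inclusions are dismissed as definitional, and the middle one is obtained by bounding $D(\eta)$ via \eqref{D-bdd} (with the shift $|\eta\setminus x|=|\eta|-1$) and absorbing the resulting polynomial factor into the geometric gain $(\alpha\nu)^{|\eta|}<1$ using \eqref{bdd}. The only cosmetic difference is that you make the constant $M$ explicit where the paper simply writes $\mathrm{const}$.
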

\begin{proof}
The first and last inclusions are obvious. To prove the second one,
we use \eqref{D-bdd}, \eqref{bdd} and obtain for any $G\in
\L_{C}$
\begin{align*}
\int_{\Ga _{0}}D\left( \eta \right)
\left\vert G\left( \eta \right) \right\vert \left( \alpha C\right)
^{\left\vert \eta \right\vert }d\la \left( \eta \right) \leq
&\int_{\Ga _{0}}\alpha ^{\left\vert \eta \right\vert }\sum_{x\in
\eta }A(1+\left\vert \eta \right\vert) ^{N}\nu^{\left\vert \eta
\right\vert -1}\left\vert G\left( \eta \right) \right\vert
C^{\left\vert \eta \right\vert }d\la \left( \eta \right)
\\\leq &\,\mathrm{const}\int_{\Ga _{0}}\left\vert G\left(
\eta \right) \right\vert C^{\left\vert \eta \right\vert }d\la \left(
\eta \right) <\infty.
\end{align*}
The statement is proved.
\end{proof}

\begin{proposition}
Let \eqref{est-d}, \eqref{est-b}, and \eqref{D-bdd} hold with
\begin{equation}\label{asmallalpha}
a_1+\frac{a_2}{\alpha C}<\frac{3}{2}
\end{equation}
for some $\alpha\in (0;1)$. Then $(\widehat{L},\D_\alpha)$ is
a~generator of a~holomorphic semigroup $\widehat{T}_\alpha\left(
t\right) $ on $\L_{\alpha C}$.
\end{proposition}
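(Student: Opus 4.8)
The plan is to obtain this statement as an essentially verbatim transcription of Theorem~\ref{th1}, reading everything in the rescaled space $\L_{\alpha C}$ in place of $\L_C$. Concretely, I would reuse the splitting $\widehat{L}=L_0+L_1$ from \eqref{L0oper}, \eqref{operL1def}, now regarding $L_0$ (multiplication by $-D(\eta)$) and $L_1$ as operators on $\L_{\alpha C}$ with domain $\D_\alpha$, and then check that the two lemmas underlying Theorem~\ref{th1} survive the rescaling. Density of $\D_\alpha$ in $\L_{\alpha C}$ and the inclusion $\Bbs\subset\D_\alpha$ are verified exactly as in the $\L_C$ case, using only \eqref{condbad1}--\eqref{condbad4}.

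First I would establish the analog of Lemma~\ref{lem1}: the operator $(L_0,\D_\alpha)$ generates a contraction semigroup and lies in $\mathcal{H}_{\alpha C}(\omega)$ for every $\omega\in(0,\frac{\pi}{2})$, with $M_\eps=\frac{1}{\cos\omega}$. This is immediate, since the resolvent computation \eqref{necf}--\eqref{resbound-ex} uses only $D(\eta)\geq 0$ together with the pointwise action of a multiplication operator, both of which are insensitive to whether the weight is $C^{|\cdot|}$ or $(\alpha C)^{|\cdot|}$.

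The crux is the analog of Lemma~\ref{lem2}, namely the relative bound of $L_1$ by $L_0$ on $\L_{\alpha C}$. Here the key observation is that the hypotheses \eqref{est-d}, \eqref{est-b}, stated with the weight $C^{|\eta|}$, transfer to the weight $(\alpha C)^{|\eta|}$ \emph{with the same constants} $a_1,a_2$: since $\alpha\in(0,1)$ one has $(\alpha C)^{|\eta|}=\alpha^{|\eta|}C^{|\eta|}\leq C^{|\eta|}$ for every $\eta\in\Ga_0$, and the integrands $|K^{-1}d|$, $|K^{-1}b|$ are nonnegative, so each left-hand side only decreases. Feeding these into the computation of Lemma~\ref{lem2} line by line, with $C$ replaced by $\alpha C$ in the normalizing factor $\frac{1}{C}$ produced by \eqref{laMecke} in the birth term, yields $\|L_1 R(z,L_0)\|\leq a_1-1+\frac{a_2}{\alpha C}$ for $\Re z>0$ and $\|L_1 G\|\leq\bigl(a_1-1+\frac{a_2}{\alpha C}\bigr)\|L_0 G\|$ for $G\in\D_\alpha$.

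Finally I would close the argument precisely as at the end of Theorem~\ref{th1}: set $\theta:=a_1+\frac{a_2}{\alpha C}-1$, which lies in $(0,\frac{1}{2})$ exactly by hypothesis \eqref{asmallalpha}; choose $\omega$ with $\cos\omega<\frac{\theta}{1-\theta}$; then $\theta<\frac{1}{1+M}$ with $M=\frac{1}{\cos\omega}$, and \cite[Theorem III.2.10]{EN2000} gives that $(\widehat{L}=L_0+L_1,\D_\alpha)$ generates a holomorphic semigroup $\widehat{T}_\alpha(t)$ on $\L_{\alpha C}$. I do not anticipate a genuine obstacle; the only point demanding care is the direction of $(\alpha C)^{|\eta|}\leq C^{|\eta|}$, which forces the stronger requirement $\frac{a_2}{\alpha C}$ (rather than $\frac{a_2}{C}$) and thereby explains why \eqref{asmallalpha}, and not \eqref{asmall}, is the correct smallness hypothesis on the larger space $\L_{\alpha C}\supset\L_C$. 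Assumption \eqref{D-bdd}, although listed, is not needed for generation itself; it serves the surrounding development by guaranteeing the inclusions \eqref{subs} that tie $\widehat{T}_\alpha(t)$ to the earlier construction.
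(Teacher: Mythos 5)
Your proposal is correct and follows exactly the route the paper takes: the paper's proof is a one-line remark that the argument of Theorem~\ref{th1} goes through with $\alpha C$ in place of $C$, since the bounds \eqref{est-d}, \eqref{est-b} with weight $C^{|\cdot|}$ imply the same bounds with weight $(\alpha C)^{|\cdot|}$ and \eqref{asmallalpha} is the correspondingly rescaled smallness condition. Your additional observations — that the monotonicity $(\alpha C)^{|\eta|}\leq C^{|\eta|}$ is the point that preserves $a_1,a_2$, that the factor $\frac{1}{C}$ from the birth term becomes $\frac{1}{\alpha C}$, and that \eqref{D-bdd} is not used in the generation argument itself — are all accurate and simply make explicit what the paper leaves implicit.
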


\begin{proof}
The proof is similar to the proof of Theorem~\ref{th1}, taking into
account that bounds \eqref{est-b}, \eqref{est-d} imply the same
bounds for $\alpha C$ instead of $C$. Note also that
\eqref{asmallalpha} is stronger than \eqref{asmall}.
\end{proof}

\begin{proposition}\label{hint}
Let \eqref{est-d}, \eqref{est-b}, and \eqref{D-bdd} hold with
\begin{equation}\label{nusmall}
1\leq\nu<\frac{C}{a_2}\biggl(\frac{3}{2}-a_1\biggr).
\end{equation}
Then, for any $\alpha$ with
\begin{equation}\label{intervalalpha}
    \dfrac{a_{2}}{C\bigl(\frac{3}{2}-a_{1}\bigr)}<\alpha<\dfrac{1}{\nu},
\end{equation}
the set $\overline{ \K_{\alpha C}}$ is a~$\widehat{T}^{\odot
}(t)$-invariant Banach subspace of $\K_{C}$. Moreover, the set $\K_{\alpha C}$ is $\widehat{T}^{\odot}(t)$-invariant too.
\end{proposition}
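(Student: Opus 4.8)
The plan is to realize $\widehat{T}^{\odot}(t)$ on $\K_{\alpha C}$ as the dual of the holomorphic semigroup $\widehat{T}_\alpha(t)$ living on the pre-dual space $\L_{\alpha C}$. Two ingredients are available precisely on the interval \eqref{intervalalpha}, whose non-emptiness is exactly what \eqref{nusmall} guarantees (together with $\nu\geq1$ and $a_1<\frac32$). The left endpoint condition $\alpha>\frac{a_2}{C(\frac32-a_1)}$ is equivalent to $a_1+\frac{a_2}{\alpha C}<\frac32$, i.e.\ to \eqref{asmallalpha}, so by the preceding proposition $(\widehat{L},\D_\alpha)$ generates a holomorphic semigroup $\widehat{T}_\alpha(t)$ on $\L_{\alpha C}$. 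The right endpoint condition $\alpha<\frac1\nu$ yields, by Proposition~\ref{pr3}, the inclusion $\K_{\alpha C}\subset\Dom(\widehat{L}^\ast)$. Note also $\alpha<1$ since $\nu\geq1$.

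First I would prove that $\widehat{T}(t)$ on $\L_C$ and $\widehat{T}_\alpha(t)$ on $\L_{\alpha C}$ are \emph{consistent} on the overlap of their spaces. Since $\alpha<1$ the identity embeds $\L_C$ continuously into $\L_{\alpha C}$, and by \eqref{subs} one has $\D\subset\L_C\subset\D_\alpha$. For $G\in\D$ the orbit $\widehat{T}(t)G$ stays in $\D\subset\D_\alpha$, is continuously differentiable in $\L_C$ (hence, via the embedding, in $\L_{\alpha C}$) with derivative $\widehat{L}\widehat{T}(t)G$, and therefore is a classical solution in $\L_{\alpha C}$ of the Cauchy problem for the generator $(\widehat{L},\D_\alpha)$. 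Uniqueness of classical solutions for a $C_0$-semigroup generator forces $\widehat{T}(t)G=\widehat{T}_\alpha(t)G$. Because $\D$ is dense in $\L_C$ (it contains $\Bbs$) and both operators are bounded from $\L_C$ into $\L_{\alpha C}$, this identity extends to all $G\in\L_C$. This consistency step carries the actual content and is the main obstacle; once it is in place, the remainder is formal.

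Next I would dualize. Let $\widehat{T}_\alpha^\ast(t)$ denote the semigroup on $\K_{\alpha C}=(\L_{\alpha C})'$ defined from $\widehat{T}_\alpha(t)$ exactly as $\widehat{T}^\ast(t)$ was defined from $\widehat{T}(t)$. For $k\in\K_{\alpha C}\subset\K_C$ and any $G\in\L_C$ the defining relations of the adjoint semigroups give $\langle\!\langle G,\widehat{T}^\ast(t)k\rangle\!\rangle=\langle\!\langle\widehat{T}(t)G,k\rangle\!\rangle=\langle\!\langle\widehat{T}_\alpha(t)G,k\rangle\!\rangle=\langle\!\langle G,\widehat{T}_\alpha^\ast(t)k\rangle\!\rangle$, where the middle equality is the consistency just proved and the pairing is the single integral \eqref{duality}. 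Since $\L_C$ separates the points of $\K_C=(\L_C)'$, I conclude $\widehat{T}^\ast(t)k=\widehat{T}_\alpha^\ast(t)k$, and the right-hand side lies in $\K_{\alpha C}$. Hence $\K_{\alpha C}$ is $\widehat{T}^\ast(t)$-invariant. Because $\K_{\alpha C}\subset\Dom(\widehat{L}^\ast)\subset\overline{\Dom(\widehat{L}^\ast)}$, the semigroup $\widehat{T}^{\odot}(t)$ coincides with $\widehat{T}^\ast(t)$ on $\K_{\alpha C}$, which gives the asserted $\widehat{T}^{\odot}(t)$-invariance of $\K_{\alpha C}$.

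Finally, passing to the closure is routine continuity. The operator $\widehat{T}^\ast(t)$ is bounded on $\K_C$, so its restriction $\widehat{T}^{\odot}(t)$ is bounded on the closed subspace $\overline{\Dom(\widehat{L}^\ast)}$, which contains $\overline{\K_{\alpha C}}$. A bounded map sending the dense subset $\K_{\alpha C}$ into $\K_{\alpha C}\subset\overline{\K_{\alpha C}}$ maps the closure $\overline{\K_{\alpha C}}$ into itself; and $\overline{\K_{\alpha C}}$, being a closed subspace of the Banach space $\K_C$, is itself a Banach space. This produces the $\widehat{T}^{\odot}(t)$-invariant Banach subspace claimed in the statement.
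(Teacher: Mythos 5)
Your proof is correct and follows the same overall architecture as the paper's: generate $\widehat{T}_\alpha(t)$ on $\L_{\alpha C}$ via \eqref{asmallalpha}, show that $\widehat{T}_\alpha(t)$ and $\widehat{T}(t)$ agree on $\L_C$, dualize to conclude $\widehat{T}^\ast(t)k=\widehat{T}_\alpha^\ast(t)k\in\K_{\alpha C}$ for $k\in\K_{\alpha C}$, and pass to $\overline{\K_{\alpha C}}$ by continuity. The one place where you genuinely diverge is the consistency step, which you correctly identify as carrying the content. The paper proves it by a resolvent identity: for $z$ in the common resolvent set and $G\in\L_C$, one has $R(z,\widehat{L})G\in\D\subset\D_\alpha$ and $R(z,\widehat{L})G-R(z,\widehat{L}_\alpha)G=R(z,\widehat{L}_\alpha)\bigl((z\1-\widehat{L}_\alpha)-(z\1-\widehat{L})\bigr)R(z,\widehat{L})G=0$ since $\widehat{L}_\alpha=\widehat{L}$ on $\D$; coincidence of resolvents then gives coincidence of the semigroups. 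You instead argue that for $G\in\D$ the orbit $\widehat{T}(t)G$ stays in $\D\subset\D_\alpha$ and is a classical solution of the Cauchy problem for $(\widehat{L},\D_\alpha)$ in $\L_{\alpha C}$, so uniqueness of classical solutions forces $\widehat{T}(t)G=\widehat{T}_\alpha(t)G$, and then you extend by density. Both arguments are standard and valid; the resolvent route is a one-line algebraic identity needing no discussion of orbit regularity, while yours is perhaps more transparent about why the two dynamics must agree and makes explicit the density extension and the final closure argument that the paper compresses into ``due to continuity of the family $\widehat{T}^\ast(t)$.''
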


\begin{proof} First of all note that the
condition on $\alpha$ implies \eqref{asmallalpha}. Next, we prove
that $\widehat{T}_{\alpha }\left( t\right) G=\widehat{T} \left( t\right) G$
for any $G\in \L_{C}\subset \L_{\alpha C}$. Let
$\widehat{L}_\alpha=(\widehat{L},\D_{\alpha })$ is the operator in
$\L_{\alpha C}$. There exists $\omega>0$ such that
$(\omega;+\infty)\subset\rho(\widehat{L})\cap\rho(\widehat{L}_\alpha)$, see
e.g. \cite[Section III.2]{EN2000}. For some fixed $z
\in(\omega;+\infty)$ we denote by $R(z,\widehat{L})=\bigl( z\1-
\widehat{L}\bigr) ^{-1}$ the resolvent of $(\widehat{L},\D)$ in
$\L_{C}$ and by $R(z,\widehat{L}_\alpha)=\bigl(
z\1-\widehat{L}_\alpha \bigr) ^{-1} $ the resolvent of $\widehat{L}_\alpha$
in $\L_{\alpha C}$. Then for any $G\in \L_{C}$ we
have $R(z,\widehat{L})G\in \D\subset \D_{\alpha }$ and \begin{equation*}
R(z,\widehat{L})G-R(z,\widehat{L}_\alpha)G=R(z,\widehat{L}_\alpha)\bigl( (z
\1-\widehat{L}_\alpha) -\bigl( z\1-\widehat{L}) \bigr)R(z,\widehat{L})G=0,
\end{equation*}
since $\widehat{L}_\alpha=\widehat{L}$ on $\D$. As a~result,
$\widehat{T}_{\alpha }\left( t\right) G=\widehat{T}\left( t\right) G$ on
$\L_C$.

Note that for any $G\in \L_{C}\subset \L_{\alpha
C}$ and for any $k\in {\K}_{\alpha C}\subset {\K}_{C}$ we have $
\widehat{T}_{\alpha }(t)G\in \L_{\alpha C}$ and
\begin{equation*}
\left\langle \!\!\left\langle \widehat{T}_{\alpha }(t)G,k\right\rangle
\!\!\right\rangle =\left\langle \!\!\left\langle G,\widehat{T}_{\alpha
}^{\ast }(t)k\right\rangle \!\!\right\rangle ,
\end{equation*} where, by the same construction as before, $\widehat{T}_{\alpha }^{\ast
}(t)k\in {\K}_{\alpha C}$. But $G\in \L_{C}$, $k\in {\K}
_{C}$ implies
\begin{equation*}
\left\langle \!\!\left\langle \widehat{T}_{\alpha }(t)G,k\right\rangle
\!\!\right\rangle =\left\langle \!\!\left\langle
\widehat{T}(t)G,k\right\rangle \!\!\right\rangle =\left\langle
\!\!\left\langle G,\widehat{T}^{\ast }(t)k\right\rangle
\!\!\right\rangle .
\end{equation*} Hence, $\widehat{T}^{\ast }(t)k=\widehat{T}_{\alpha }^{\ast }(t)k\in {\K} _{\alpha C}$ that proves the statement due to continuity of the family $\widehat{ T}^{\ast }(t)$.
\end{proof}

By e.g. \cite[Subsection II.2.3]{EN2000}, one can consider the restriction $\widehat{T}^{\odot \alpha}(t)$ of the semigroup $\widehat{T}^{\odot}(t)$ onto $\overline{\K_{\alpha C}}$. It will be strongly continuous semigroup with the generator
$\widehat{L}^{\odot \alpha}$ which is a~restriction of ${\widehat{L}}^\odot$
onto $\overline{\K_{\alpha C}}$. Namely, cf.~\ref{domLsundual},
\begin{equation}\label{domLsundualalpha}
    \Dom({\widehat{L}} ^{\odot\alpha})=\Bigl\{k\in \overline{\K_{\alpha C}} \Bigm|
{\widehat{L}}^\ast k\in \overline{\K_{\alpha C}}\Bigr\},
\end{equation}
and ${\widehat{L} }^{\odot \alpha}k ={\widehat{L}}^\odot k={\widehat{L}}^\ast k$ for any $k\in\overline{\K_{\alpha C}}$. In the other words, ${\widehat{L}} ^{\odot\alpha}$ is a part of ${\widehat{L}}^\ast$.

And now we may proceed to the main statement of this Subsection.
\begin{theorem}\label{ThClasCol}
  Let \eqref{est-d}, \eqref{est-b}, \eqref{D-bdd}, and \eqref{nusmall} hold, and let $\alpha$ be chosen as in \eqref{intervalalpha}. Then for any $k_0\in\overline{\K_{\alpha C}}$ there exists a unique classical solution to~\eqref{QE} in the space $\overline{\K_{\alpha C}}$, and this solution is given by $k_t=\widehat{T}^{\odot \alpha}(t)k_0$. Moreover, $k_0\in\K_{\alpha C}$ implies $k_t\in\K_{\alpha C}$.
\end{theorem}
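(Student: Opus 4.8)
The plan is to identify $k_t=\widehat{T}^{\odot\alpha}(t)k_0$ as the orbit of a \emph{holomorphic} $C_0$-semigroup on $\overline{\K_{\alpha C}}$ and then read off the classical solution from the regularizing property that holomorphic semigroups enjoy for arbitrary initial data. First I would collect what is already in hand. By Proposition~\ref{hint} (whose admissible $\alpha$, chosen as in \eqref{intervalalpha}, already forces \eqref{asmallalpha}) together with the discussion around \eqref{domLsundualalpha}, the family $\widehat{T}^{\odot\alpha}(t)$ is a strongly continuous semigroup on the $\K_C$-closed invariant subspace $\overline{\K_{\alpha C}}$, and its generator $\widehat{L}^{\odot\alpha}$ is a part of $\widehat{L}^{\ast}$, i.e.\ $\widehat{L}^{\odot\alpha}k=\widehat{L}^{\ast}k$ for $k\in\Dom(\widehat{L}^{\odot\alpha})$. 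The Proposition preceding Proposition~\ref{hint} moreover gives that $(\widehat{L},\D_\alpha)$ generates a holomorphic semigroup $\widehat{T}_\alpha(t)$ on $\L_{\alpha C}$.

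The key step is to promote $\widehat{T}^{\odot\alpha}(t)$ from strongly continuous to holomorphic. The proof of Proposition~\ref{hint} shows that $\widehat{T}^{\ast}(t)k=\widehat{T}_\alpha^{\ast}(t)k\in\K_{\alpha C}$ for every $k\in\K_{\alpha C}$, where $\widehat{T}_\alpha^{\ast}(t)$ is the adjoint of $\widehat{T}_\alpha(t)$. Hence on the dense subset $\K_{\alpha C}$ of $\overline{\K_{\alpha C}}$ the semigroup $\widehat{T}^{\odot\alpha}(t)$ agrees with the adjoint of the holomorphic semigroup $\widehat{T}_\alpha(t)$. Since $\widehat{T}_\alpha(t)$ extends to a bounded analytic family $\widehat{T}_\alpha(z)$ in a sector $|\arg z|<\omega$, the adjoints $\widehat{T}_\alpha^{\ast}(z)$ form a weak*-analytic family there, and their restriction to the subspace of strong continuity is norm-analytic; this is exactly the sun-dual construction, so that $\widehat{T}^{\odot\alpha}(t)$ is a bounded holomorphic $C_0$-semigroup on $\overline{\K_{\alpha C}}$ (cf.\ \cite[Subsections~II.2.5--II.2.6 and Theorem~II.4.6]{EN2000}).

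With holomorphy secured, I would invoke the standard smoothing property: for every $k_0\in\overline{\K_{\alpha C}}$ and every $t>0$ one has $k_t:=\widehat{T}^{\odot\alpha}(t)k_0\in\Dom(\widehat{L}^{\odot\alpha})$, the map $t\mapsto k_t$ is infinitely differentiable (indeed real-analytic) on $(0,\infty)$ with $\frac{d}{dt}k_t=\widehat{L}^{\odot\alpha}k_t=\widehat{L}^{\ast}k_t$, while strong continuity gives $k_t\to k_0$ in $\overline{\K_{\alpha C}}$ as $t\downarrow 0$. Because $\widehat{L}^{\odot\alpha}$ coincides with $\widehat{L}^{\ast}$ on its domain, $k_t$ is a classical solution of \eqref{QE}. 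For uniqueness I would run the usual backward-orbit argument: if $u_t$ is any classical solution in $\overline{\K_{\alpha C}}$ with $u_0=0$, fix $t>0$ and differentiate $s\mapsto\widehat{T}^{\odot\alpha}(t-s)u_s$ on $(0,t)$; since $u_s\in\Dom(\widehat{L}^{\odot\alpha})$ solves the equation, this derivative vanishes, the map is constant, and passing to the endpoints yields $u_t=\widehat{T}^{\odot\alpha}(t)u_0=0$. The final assertion is then immediate, as Proposition~\ref{hint} states that $\K_{\alpha C}$ itself (not only its closure) is $\widehat{T}^{\odot\alpha}(t)$-invariant, so $k_0\in\K_{\alpha C}$ gives $k_t\in\K_{\alpha C}$ for all $t\geq0$.

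I expect the genuine obstacle to be the holomorphy transfer in the second step. One must check that passing from $\widehat{T}_\alpha(t)$ on $\L_{\alpha C}$ to its adjoint on $\K_{\alpha C}$ and then restricting to the $\K_C$-closed invariant subspace $\overline{\K_{\alpha C}}$ yields genuine norm-analyticity rather than merely weak*-analyticity; this is precisely where the non-reflexivity of $\L_{\alpha C}$ and the two distinct closure norms must be reconciled through the sun-dual formalism. Everything after that—the classical-solution identity, the uniqueness argument, and the invariance of $\K_{\alpha C}$—is routine once analyticity and the identification of $\widehat{L}^{\odot\alpha}$ as a part of $\widehat{L}^{\ast}$ are in place.
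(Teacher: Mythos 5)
Your overall strategy (holomorphy of the sun-dual semigroup plus the smoothing property of analytic semigroups, then invariance of $\overline{\K_{\alpha C}}$) is the same as the paper's, and everything after your ``key step'' --- the classical-solution identity, the backward-orbit uniqueness argument, and the $\K_{\alpha C}$-invariance via Proposition~\ref{hint} --- is correct and routine. But the key step itself, the transfer of holomorphy, has a genuine gap, and it is exactly the one you flag at the end without closing. You pass to the adjoint of the holomorphic semigroup $\widehat{T}_\alpha(t)$ on $\L_{\alpha C}$ and invoke the sun-dual construction there. That construction, applied at the level $\alpha C$, produces a holomorphic semigroup on the subspace of strong continuity of $\widehat{T}_\alpha^{\ast}(t)$ \emph{inside $\K_{\alpha C}$, with respect to the norm $\Vert\cdot\Vert_{\K_{\alpha C}}$}. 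The theorem, however, asks for a classical solution in $\overline{\K_{\alpha C}}$, which is the closure of $\K_{\alpha C}$ in the norm $\Vert\cdot\Vert_{\K_C}$, and differentiability is required in that norm. The two norms are not equivalent ($\Vert k\Vert_{\K_{\alpha C}}=\esssup \,(\alpha C)^{-|\eta|}|k(\eta)|$ versus $C^{-|\eta|}$), so analyticity of $t\mapsto\widehat{T}_\alpha^{\ast}(t)k$ in $\K_{\alpha C}$ does not yield analyticity, nor even differentiability, of the same orbit measured in $\K_C$. Your argument as written therefore does not establish that $k_t$ solves \eqref{QE} classically in $\overline{\K_{\alpha C}}\subset\K_C$.

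The paper closes this gap by applying the duality machinery at the level $C$ rather than $\alpha C$: since $(\widehat{L},\D)$ generates a holomorphic semigroup on $\L_C$ (Theorem~\ref{th1}), one has $R(z,\widehat{L}^{\ast})=R(z,\widehat{L})^{\ast}$ and $R(z,\widehat{L}^{\odot})=R(z,\widehat{L})^{\ast}\upharpoonright_{\overline{\Dom(\widehat{L}^{\ast})}}$, whence $\Vert R(z,\widehat{L}^{\odot})\Vert\leq\Vert R(z,\widehat{L})\Vert$ in the operator norm of $\K_C$; the sectorial resolvent estimate thus passes verbatim to $\widehat{L}^{\odot}$, and $\widehat{T}^{\odot}(t)$ is holomorphic on $\overline{\Dom(\widehat{L}^{\ast})}$ \emph{in the $\K_C$ norm}. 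Then \cite[Corollary 4.1.5]{Paz1983} gives existence and uniqueness of the classical solution for every $k_0\in\overline{\Dom(\widehat{L}^{\ast})}$, and only afterwards does Proposition~\ref{hint} confine the orbit to the closed invariant subspace $\overline{\K_{\alpha C}}$, giving $k_t\in\overline{\K_{\alpha C}}\cap\Dom(\widehat{L}^{\odot})=\Dom(\widehat{L}^{\odot\alpha})$. If you replace your second step by this resolvent-norm argument for $\widehat{T}^{\odot}(t)$ on $\overline{\Dom(\widehat{L}^{\ast})}$ (so that the role of the auxiliary semigroup $\widehat{T}_\alpha(t)$ is only what Proposition~\ref{hint} already uses it for, namely proving invariance of $\K_{\alpha C}$), the rest of your proof goes through unchanged.
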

\begin{proof}
  We recall that $(\widehat{L}, \D)$ is a densely defined closed operator on $\L_C$ (as a generator of a $C_0$-semigroup $\widehat{T}(t)$). Then, by e.g. \cite[Lemma 1.4.1]{vNee1992}, for the dual operator $\bigl( \widehat{L}^*, \Dom(\widehat{L}^*\bigr)$ we have that $\rho(\widehat{L}^*)=\rho(\widehat{L})$ and, for any $z\in\rho(\widehat{L})$, $R(z,\widehat{L}^*)=R(z,\widehat{L})^*$. In particular,
  \begin{equation}\label{eqnorm}
    \bigl\Vert R(z,\widehat{L}^*)\bigr\Vert=\bigl\Vert R(z,\widehat{L})^*\bigr\Vert
    =\bigl\Vert R(z,\widehat{L})\bigr\Vert.
  \end{equation}
  Next, if we denote by $R(z,\widehat{L})^\odot$ the restriction of $R(z,\widehat{L})^\ast$ onto $R(z,\widehat{L})^\ast$-invariant space $\overline{\Dom(\widehat{L}^*\bigr)}$ then, by e.g. \cite[Theorem 1.4.2]{vNee1992},
  $\rho(\widehat{L}^\odot)=\rho(\widehat{L}^*)$ and, for any $z\in\rho(\widehat{L}^*)=\rho(\widehat{L})$, $R(z,\widehat{L}^\odot)=R(z,\widehat{L})^\odot$. Therefore, by \eqref{eqnorm},
  \[
  \bigl\Vert R(z,\widehat{L}^\odot)\bigr\Vert\leq\bigl\Vert R(z,\widehat{L})\bigr\Vert.
  \]
  Then, taking into account that by Theorem \ref{th1} the operator $( \widehat{L}, \D)$ is a generator of the holomorphic semigroup $\widehat{T}(t)$, we immediately conclude that the same property has the semigroup $\widehat{T}^\odot(t)$ with the generator $\bigl( \widehat{L}^\odot, \Dom(\widehat{L}^\odot)\bigr)$ in the space $\overline{\Dom(\widehat{L}^*\bigr)}$.

  As a result, by e.g. \cite[Corollary 4.1.5]{Paz1983}, the initial value problem \eqref{QE} in the Banach space $\overline{\Dom(\widehat{L}^*\bigr)}$ has a~unique classical solution for {\em any} $k_0\in\overline{\Dom(\widehat{L}^*\bigr)}$. In particular, it means that the solution $k_t=\widehat{T}^\odot(t)k_0$ is continuously differentiable in $t$ w.r.t. the norm of $\overline{\Dom(\widehat{L}^*\bigr)}$ that is the norm $\|\cdot\|_{\K_C}$, and also $k_t\in\Dom(\widehat{L}^\odot)$. But by Proposition~\ref{hint}, the space $\overline{\K_{\alpha C}}$ is $\widehat{T}^\odot(t)$-invariant. Hence, if we consider now the initial value $k_0\in\overline{\K_{\alpha C}}\subset\overline{\Dom(\widehat{L}^*\bigr)}$ we obtain with a necessity that $k_t=\widehat{T}^\odot(t)k_0=\widehat{T}^{\odot\alpha}(t)k_0\in\overline{\K_{\alpha C}}$. Therefore, $k_t\in\overline{\K_{\alpha C}}\bigcap \Dom(\widehat{L}^\odot)=\Dom(\widehat{L}^{\odot\alpha})$ (see again \cite[Subsection II.2.3]{EN2000}) and, recall, $k_t$ is continuously differentiable~in~$t$ w.r.t.~the~norm $\|\cdot\|_{\K_C}$ that~is~the~norm in $\overline{\K_{\alpha C}}$. This completes the proof of the first statement. The second one follows directly now from Proposition~\ref{hint}.
\end{proof}

\subsection{Examples of dynamics}

We proceed now to describing the concrete birth-and-death dynamics which are important for different application. We will consider the explicit conditions on parameters of systems which imply the general conditions on rates $b$ and $d$ above.
For simplicity of notations we denote the l.h.s. of \eqref{est-d} and \eqref{est-b} by $I_d(\xi)$ and $I_b(\xi)$, $\xi\in\Ga_0$, correspondingly.

\begin{example} (Surgailis dynamics)
Let the rates $d$ and $b$ are independent on configuration variable, namely,
\begin{equation}\label{SurgRates}
    d(x,\ga)=m(x), \qquad b(x,\ga)=z(x), \quad x\in\X,\ga\in\Ga,
\end{equation}
where $0<m,z\in L^\infty(\X)$. Then, by \eqref{K-1m} we obtain that
\[
I_d(\xi) = \langle m,\xi\rangle= D(\xi), \qquad I_b(\xi)=\langle z,\xi\rangle, \quad\xi\in\Ga_0.
\]
Therefore, \eqref{est-d}, \eqref{est-b}, \eqref{asmall} hold if only
\begin{equation}\label{SurgCond}
    z(x)\leq a m(x), \quad x\in\X
\end{equation}
with any
\begin{equation}\label{Surga2}
    0<a<\frac{C}{2}.
\end{equation}
Clearly, in this case \eqref{D-bdd} holds with $N=0$, $\nu=1$, therefore, the condition \eqref{intervalalpha} is just
\begin{equation}\label{Surgalpha}
    \frac{2a}{C}<\alpha<1.
\end{equation}
The case of constant (in space) $m$ and $\sigma$ was considered in \cite{Fin2010Surg}. Similarly to that results, one can derive the explicit expression for the solution to the initial value problem \eqref{QE} considered point-wise in $\Ga_0$, namely,
\begin{equation}\label{SurgSol}
    k_t(\eta)=e_\la(e^{-tm},\eta)\sum_{\xi\subset\eta }e_\la\Bigl(\frac{z}{m}\bigl(e^{tm}-1\bigr),\xi\Bigr)k_0(\eta\setminus\xi), \quad \eta\in\Ga_0.
\end{equation}
Note that, using \eqref{SurgSol}, it can be possible to show directly that the statement of Theorem~\ref{ThClasCol} still holds if we drop $2$ in \eqref{Surga2} and \eqref{Surgalpha}.
\end{example}

\begin{example} (Glauber-type dynamics). \label{ex-Gl}
Let $L$ be given by \eqref{BaDGen} with
\begin{align}
d(x,\ga\setminus x)&=m(x)\exp\Bigl\{s\sum_{y\in\ga\setminus x}\phi(x-y)\Bigr\}, &&x\in\ga,\ \ga\in\Ga,\label{Gl-d}\\
b(x,\ga)&=z(x)\exp\Bigl\{(s-1)\sum_{y\in\ga}\phi(x-y)\Bigr\}, &&x\in\X\setminus\ga,\ \ga\in\Ga,\label{Gl-b}
\end{align}
where $\phi:\X\setminus\{0\}\rightarrow\R_+$ is a~pair potential,
$\phi(-x)=\phi(x)$, $0<z,m\in L^\infty(\X)$, and $s\in[0;1]$.
Note that in the case $m(x)\equiv 1$, $z(x)\equiv z>0$ and for
any $s\in[0;1]$ the operator $L$ is well defined and, moreover,
symmetric in the space $L^2(\Ga,\mu)$, where $\mu$ is a~Gibbs
measure, given by the pair potential $\phi$ and activity parameter
$z$ (see e.g. \cite{KLR2007} and references therein). This gives
possibility to study the corresponding semigroup in $L^2(\Ga,\mu)$.
If, additionally, $s=0$, the corresponding dynamics was also studied in
another Banach spaces, see e.g. \cite{KKZ2006, FKKZ2010, FKK2010}.
Below we show that one of the main result of the paper stated in
Theorem~\ref{ThClasCol} can be applied to the case of arbitrary
$s\in[0;1]$ and non-constant $m$ and $z$. Set
\begin{equation}\label{integr_cond}
\beta_{\tau}:=\int_\X\bigl\vert e^{\tau\phi(x)}-1\bigr\vert
dx\in[0;\infty], \quad \tau\in[-1;1].
\end{equation}
Let $s$ be arbitrary and fixed. Suppose that $\beta_s<\infty$,
$\beta_{s-1}<\infty$. Then,
 by \eqref{Gl-d}, \eqref{Gl-b}, \eqref{K-1-of-exp}, and \eqref{intexp},
\begin{align*}
I_d(\xi)&=\sum_{x\in\xi}d(x,\xi\setminus x)e^{C\beta_{s}}=D(\xi)e^{C\beta_{s}},\\
\intertext{and, analogously, taking into account that $\phi\geq0$,}
I_b(\xi)&=\sum_{x\in\xi}b(x,\xi\setminus x)e^{C\beta_{s-1}}\leq
\sum_{x\in\xi}\frac{z(x)}{m(x)}d(x,\xi\setminus x)e^{C\beta_{s-1}}
\end{align*}
Therefore, to apply Theorem~\ref{th1} we should
assume that there exists $\sigma>0$ such that
\begin{equation}\label{nonhomo}
    z(x)\leq \sigma m(x), \quad x\in\X,
\end{equation}
and
\begin{equation}\label{sn0smallz}
e^{C\beta_{s}} + \frac{\sigma}{C}e^{C\beta_{s-1}}<\frac{3}{2}.
\end{equation}
In particular, for $s=0$ we need
\begin{equation}\label{sn0smallz1}
\frac{\sigma}{C}e^{C\beta_{-1}}<\frac{1}{2}.
\end{equation}
Next, to have \eqref{D-bdd} and \eqref{nusmall}, we will distinguish two cases. For $s=0$ we obtain \eqref{D-bdd} since $m\in L^\infty(\X)$. In this case, $\nu=1$ that fulfilles \eqref{nusmall} as well. For $s\in(0,1]$, we should assume that
\begin{equation}\label{bdd-pot}
    0\leq \phi\in L^\infty(\X).
\end{equation}
Then, by \eqref{Gl-d}, $\nu= e^{s\bar{\phi}}\geq 1$, where $\bar{\phi}:=\Vert\phi\Vert_{L^\infty(\X)}$. Therefore, to have \eqref{nusmall}, we need the following improvement of \eqref{sn0smallz}:
\begin{equation}\label{sn0smallzimprove}
e^{C\beta_{s}} + \frac{\sigma}{C}e^{s\bar{\phi}+C\beta_{s-1}}<\frac{3}{2}.
\end{equation}
\end{example}

\begin{example}\label{ex-BDLP}
(Bolker--Dieckman--Law--Pacala (BDLP) model) This example describes
a generalization of the model of plant ecology (see \cite{FKK2009} and references
therein). Let $L$ be given by \eqref{BaDGen} with
\begin{align}
d(x,\ga\setminus x)&=m(x) + \varkappa^-(x)\sum_{y\in\ga\setminus x}a^-(x-y), &&x\in\ga,\ \ga\in\Ga,\label{BDLP-d}\\
b(x,\ga)&=\varkappa^+(x)\sum_{y\in\ga}a^+(x-y), &&x\in\X\setminus\ga,\
\ga\in\Ga,\label{BDLP-b}
\end{align}
where $0<m\in L^\infty(\X)$, $0\leq\varkappa^\pm\in L^\infty(\X)$, $0\leq a^\pm\in L^1(\X,dx)\cap
L^\infty(\X,dx)$, $\int_\X a^\pm(x)dx=1$. Then, by \eqref{K-1m}, \eqref{K-1-of-lin}, and \eqref{Ga0}--\eqref{speccases},
\[
I_d(\xi)= \sum_{x\in\xi} d(x,\xi\setminus x) + \sum_{x\in\xi} C\varkappa^-(x),\qquad
I_b(\xi)= \sum_{x\in\xi} b(x,\xi\setminus x) + \sum_{x\in\xi} C\varkappa^+(x).
\]
Let us suppose, cf. \cite{FKK2009}, that there exists $\delta>0$ such that
\begin{align}
(4+\delta)C\varkappa^-(x)&\leq m(x), \quad x\in\X,\label{smallparBDLP-1}\\
(4+\delta)\varkappa^+(x)&\leq m(x), \quad x\in\X,\label{smallparBDLP-3}\\
4\varkappa^+(x)a^+(x)&\leq C\varkappa^-(x)a^-(x).
\quad x\in\X,\label{smallparBDLP-2}
\end{align}
Then
\begin{align*}
  d(x,\xi)+C\varkappa^-(x)&\leq d(x,\xi)+\frac{m(x)}{4+\delta}\leq\Bigl(1+\frac{1}{4+\delta}\Bigr)d(x,\xi),\\
  b(x,\xi)+C\varkappa^+(x)&\leq\frac{C}{4}\varkappa^-(x)\sum_{y\in\xi}a^-(x-y)+\frac{Cm(x)}{4+\delta}<\frac{C}{4}d(x,\xi),
\end{align*}
Hence, \eqref{est-d}, \eqref{est-b} hold with
\[
a_1=1+\frac{1}{4+\delta}, \qquad a_2=\frac{C}{4},
\]
that fulfills \eqref{asmall}.
Next, under conditions \eqref{smallparBDLP-1}, \eqref{smallparBDLP-2}, we have
\[
d(x,\xi)\leq \Vert m\Vert_{L^\infty(\X)}+\Vert \varkappa^-\Vert_{L^\infty(\X)}
\Vert a^-\Vert_{L^\infty(\X)} |\xi|, \quad \xi\in\Ga_0,
\]
and hence \eqref{D-bdd} holds with $\nu=1$, which makes \eqref{nusmall} obvious.
\end{example}
\begin{remark}
  It was shown in \cite{FKK2009} that, for the case of constant $m,\varkappa^\pm$, the condition  like \eqref{smallparBDLP-1} is essential. Namely,
 if $m>0$ is arbitrary small the operator
 $\widehat{L}$ will not be even accretive in $\L_C$.
\end{remark}

\begin{example}
(Contact model with establishment). \label{CMest}
Let $L$ be given by \eqref{BaDGen} with $d(x,\ga)=m(x)$ for all $\ga\in\Ga$ and
\begin{equation}\label{CMestrates}
    b(x,\ga)=\varkappa(x)\exp\Bigl\{\sum_{y\in\ga}\phi(x-y)\Bigr\}
    \sum_{y\in\ga}a(x-y), \quad \ga\in\Ga, x\in\X\setminus \ga.
\end{equation}
Here $0<m\in L^\infty(\X)$, $0\leq\varkappa\in L^\infty(\X)$, $0\leq a\in L^1(\X)\cap L^\infty(\X)$, $\int_\X a(x) \,dx=1$.
\end{example}

\subsection{Stationary equation}\label{subsect-evol-se}
In this subsection we study the question about stationary solutions
to \eqref{QE}. For any $s\ge0$, we consider the following subset
of $\K_C$
\[
\K_{\alpha C}^{(s)}:=\bigl\{ k\in \K_{\alpha
C}\bigm| k(\emptyset)=s\bigr\}.
\]
We define $\widetilde{\K}$ to be the closure of $\K_{\alpha
C}^{(0)}$ in the norm of $\K_C$. It is clear that $\widetilde{\K}$
with the norm of $\K_C$ is a~Banach space.

\begin{proposition}
Let \eqref{est-d}, \eqref{est-b}, and \eqref{D-bdd} be satisfied
with
\begin{equation}\label{statior-est}
a_1+\frac{a_2}{C}<2.
\end{equation}
Assume, additionally, that
\begin{equation}\label{nonezerodeath}
  d(x,\emptyset)>0, \quad x\in\X.
\end{equation}
Then for any $\alpha\in(0;\frac{1}{\nu})$ the stationary equation
\begin{equation}\label{stationaryeqn}
\widehat{L}^\ast k=0
\end{equation}
has a~unique solution $k_\inv$ from $\K_{\alpha C}^{(1)}$ which is
given by the expression
\begin{equation}\label{invsol}
k_\inv=1^*+(\1-S)^{-1}E.
\end{equation}
Here $1^\ast$ denotes the function defined by
$1^\ast(\eta)=0^{|\eta|}$, $\eta\in\Ga_0$, the function
$E\in\K_{\alpha C}^{(0)}$ is such that
\[
E(\eta)=\1_{\Ga^{(1)}}(\eta)\sum_{x\in\eta}\frac{b(x,\emptyset)}{d(x,\emptyset)},
\quad \eta\in\Ga_0,
\]
and $S$ is a~generalized Kirkwood--Salzburg operator on
$\tilde{\K}$, given by
\begin{align}
\left( Sk\right) \left( \eta \right) =&-\frac{1}{D\left( \eta
\right) } \sum_{x\in \eta }\int_{\Ga _{0}\setminus \left\{ \emptyset
\right\} }k(\zeta \cup \eta )(K^{-1}d(x,\cdot \cup \eta
\setminus x))(\zeta
)d\la (\zeta ) \label{KSoper}\\
&+\frac{1}{D\left( \eta \right) }\sum_{x\in \eta }\int_{\Ga
_{0}}k(\zeta \cup (\eta \setminus x))(K^{-1}b(x,\cdot \cup \eta
\setminus x))(\zeta )d\la (\zeta ),\notag
\end{align}
for $\eta \neq \emptyset$ and $\left( Sk\right) \left( \emptyset
\right) =0$. In particular, if $b(x,\emptyset)=0$ for a.a. $x\in\X$
then this solution is such that
\begin{equation}\label{deltasol}
k_\inv^{(n)}=0, \qquad n\geq 1.
\end{equation}
\end{proposition}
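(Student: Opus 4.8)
The plan is to rewrite the stationary equation as a fixed point problem for the operator $S$ and solve it by a Neumann series. First I would split off the diagonal of $\widehat{L}^\ast$ in \eqref{defLtriangle}: the contribution of $\zeta=\emptyset$ to the death integral is $-\sum_{x\in\eta}k(\eta)\,d(x,\eta\setminus x)=-D(\eta)k(\eta)$, so that for $\eta\neq\emptyset$ one has $(\widehat{L}^\ast k)(\eta)=-D(\eta)k(\eta)+D(\eta)(Sk)(\eta)$ with $S$ exactly as in \eqref{KSoper} (note that the remaining death integral there runs over $\Ga_0\setminus\{\emptyset\}$). Since $D(\eta)>0$ for every $\eta\neq\emptyset$ --- by \eqref{condbad1} when $|\eta|\geq2$, and by the extra hypothesis \eqref{nonezerodeath} when $|\eta|=1$, where $D(\{x\})=d(x,\emptyset)$ --- the equation $\widehat{L}^\ast k=0$ is equivalent to $k(\eta)=(Sk)(\eta)$ for all $\eta\neq\emptyset$. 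At $\eta=\emptyset$ both sums in \eqref{defLtriangle} are empty, so $(\widehat{L}^\ast k)(\emptyset)=0$ automatically and $k(\emptyset)$ is free; it is pinned down by the normalization $k(\emptyset)=1$.

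Writing $k=1^\ast+g$ with $g(\emptyset)=0$, I would next compute $S1^\ast$. Since $1^\ast(\zeta\cup\eta)\neq0$ forces $\zeta\cup\eta=\emptyset$, the death part of $S1^\ast$ vanishes for $\eta\neq\emptyset$, while in the birth part only the term $\eta=\{x\}$, $\zeta=\emptyset$ survives, giving $(S1^\ast)(\{x\})=b(x,\emptyset)/d(x,\emptyset)$; hence $S1^\ast=E$. The fixed point equation $k=Sk$ thus becomes $g=E+Sg$, i.e. $(\1-S)g=E$, and the candidate solution is precisely $k_\inv=1^\ast+(\1-S)^{-1}E$. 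One also checks here that $E$ is well defined and lies in $\K_{\alpha C}^{(0)}$: applying \eqref{est-b} with $\xi=\{x\}$ and keeping only the $\eta=\emptyset$ term gives $b(x,\emptyset)\leq a_2\,d(x,\emptyset)$, so $E$ is bounded by $a_2$ and supported on $\Ga^{(1)}$.

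The heart of the argument, and the step I expect to be the main obstacle, is the contraction estimate for $S$ in the norm of $\K_C$. Bounding $C^{-|\eta|}|(Sk)(\eta)|$ by $\Vert k\Vert_{\K_C}$, I would pull the factor $C^{|\eta|}$ out of the death integral and $C^{|\eta|-1}$ out of the birth integral. The crucial point is that the death integral runs over $\Ga_0\setminus\{\emptyset\}$: by \eqref{est-d} the full integral over $\Ga_0$ is at most $a_1D(\eta)$ and the $\emptyset$-term equals $D(\eta)$, so the $\emptyset$-free part is at most $(a_1-1)D(\eta)$; together with the bound $\frac{a_2}{C}D(\eta)$ for the birth part from \eqref{est-b}, this yields $\Vert Sk\Vert_{\K_C}\leq(a_1-1+\frac{a_2}{C})\Vert k\Vert_{\K_C}$. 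Condition \eqref{statior-est} is exactly $a_1-1+\frac{a_2}{C}<1$, so $S$ is a strict contraction on the closed subspace of $\K_C$ of functions vanishing at $\emptyset$, hence on $\widetilde{\K}$, and $(\1-S)^{-1}=\sum_{n\geq0}S^n$ converges. A parallel estimate with weight $\alpha C$ shows $S$ maps $\K_{\alpha C}^{(0)}$ boundedly into itself, so each $S^nE$ stays in $\K_{\alpha C}^{(0)}$ and the limit $g=(\1-S)^{-1}E$ lies in its $\K_C$-closure $\widetilde{\K}$; thus $k_\inv=1^\ast+g$ is the asserted solution. Uniqueness is immediate: two solutions differ by an element of $\K_{\alpha C}^{(0)}$ annihilated by $\1-S$, which must be $0$ by the contraction property.

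For the final assertion I would simply note that if $b(x,\emptyset)=0$ for a.a.\ $x$ then $E\equiv0$, whence $g=(\1-S)^{-1}0=0$ and $k_\inv=1^\ast$, i.e. $k_\inv^{(n)}=0$ for all $n\geq1$.
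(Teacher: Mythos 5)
Your proposal is correct and follows essentially the same route as the paper: split $\widehat{L}^\ast$ into the multiplication by $-D(\eta)$ plus $D(\eta)S$, pass to $\tilde k=k-1^\ast$ (your computation of $S1^\ast=E$ is just a repackaging of the paper's derivation of the inhomogeneous term $\sum_{x\in\eta}0^{|\eta\setminus x|}b(x,\eta\setminus x)$), and close with the identical estimate $\Vert S\Vert\leq a_1-1+\frac{a_2}{C}<1$ on $\widetilde{\K}$ coming from \eqref{est-d}, \eqref{est-b} and \eqref{statior-est}. Your added remarks on why $D(\eta)>0$ for $\eta\neq\emptyset$ and why $E\in\K_{\alpha C}^{(0)}$ are correct details the paper leaves implicit.
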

\begin{remark}
It is worth noting that \eqref{est-d1}, \eqref{est-b1} imply
\eqref{nonezerodeath}.
\end{remark}
\begin{proof}
Suppose that \eqref{stationaryeqn} holds
for some $k\in\K_{\alpha C}^{(1)}$. Then
\begin{align}
D\left( \eta \right) k(\eta )=&-\sum_{x\in \eta }\int_{\Ga
_{0}\setminus \left\{ \emptyset \right\} }k(\zeta \cup \eta
)\bigl(K^{-1}d(x,\cdot \cup
\eta \setminus x)\bigr)(\zeta )d\la (\zeta ) \notag\\
&+\sum_{x\in \eta }\int_{\Ga _{0}}k(\zeta \cup (\eta \setminus
x))\bigl(K^{-1}b(x,\cdot \cup \eta \setminus x)\bigr)(\zeta
)d\la (\zeta ). \label{rel}
\end{align}
The equality \eqref{rel} is satisfied for any $k\in\K_{\alpha
C}^{(1)}$ at the point $\eta=\emptyset$. Using the fact that
$D(\emptyset)=0$ one may rewrite \eqref{rel} in terms of the
function $\tilde{k}=k-1^*\in\K_{\alpha C}^{(0)}$. Namely,
\begin{align}
D\left( \eta \right) \tilde{k}(\eta )=&-\sum_{x\in \eta }\int_{\Ga
_{0}\setminus \left\{ \emptyset \right\} }\tilde{k}(\zeta \cup \eta
)\bigl(K^{-1}d(x,\cdot \cup
\eta \setminus x)\bigr)(\zeta )d\la (\zeta ) \notag\\
&+\sum_{x\in \eta }\int_{\Ga _{0}}\tilde{k}(\zeta \cup (\eta
\setminus x))\bigl(K^{-1}b(x,\cdot \cup \eta \setminus
x)\bigr)(\zeta )d\la (\zeta ).\notag\\ &+\sum_{x\in \eta
}0^{|\eta\setminus x|}b(x,\eta\setminus x).\label{rel1}
\end{align}
As a~result,
\[
\tilde{k}(\eta)=(S\tilde{k})(\eta)+E(\eta),\quad
\eta\in\Ga_0.
\]
Next, for $\eta\neq \emptyset$
\begin{align*}
&C^{-\left\vert \eta \right\vert }\left\vert \left( Sk\right) \left( \eta
\right) \right\vert \\
\leq &\frac{C^{-\left\vert \eta \right\vert }}{D\left( \eta \right)
}\sum_{x\in \eta }\int_{\Ga _{0}\setminus \left\{ \emptyset \right\}
}\left\vert k(\zeta \cup \eta )\right\vert \left\vert
(K^{-1}d(x,\cdot \cup \eta \setminus x))(\zeta
)\right\vert d\la (\zeta ) \\
&+\frac{C^{-\left\vert \eta \right\vert }}{D\left( \eta \right)
}\sum_{x\in \eta }\int_{\Ga _{0}}k(\zeta \cup (\eta \setminus
x))\left\vert (K^{-1}b(x,\cdot \cup \eta \setminus
x))(\zeta )\right\vert d\la (\zeta ) \\
\leq &\frac{\left\Vert k\right\Vert _{\K_C}}{D\left( \eta \right)
}\sum_{x\in \eta }\int_{\Ga _{0}\setminus \left\{ \emptyset \right\}
}C^{\left\vert \zeta \right\vert }\left\vert (K^{-1}d(x,\cdot
\cup \eta \setminus
x))(\zeta )\right\vert d\la (\zeta ) \\
&+\frac{\left\Vert k\right\Vert _{\K_C}}{D\left( \eta \right)
}\frac{1}{C} \sum_{x\in \eta }\int_{\Ga _{0}}C^{\left\vert \zeta
\right\vert }\left\vert (K^{-1}b(x,\cdot \cup \eta \setminus
x))(\zeta )\right\vert
d\la (\zeta ) \\
\leq &\frac{\left\Vert k\right\Vert _{\K_C}}{D\left( \eta \right) }D\left(
\eta \right) \left( a_{1}-1+\frac{a_{2}}{C}\right)=\left( a_{1}-1+\frac{a_{2}}{C}\right)\|k\| _{\K_C}.
\end{align*}
Hence,
\[
\|S\|=a_{1}+\frac{a_{2}}{C}-1<1
\]
in $\widetilde{K}$. This finishes the proof.
\end{proof}

\begin{remark}
The name of the operator \eqref{KSoper} is motivated by
Example~\ref{ex-Gl}. Namely, if $s=0$ then the operator
\eqref{KSoper} has form
\begin{align*}
\left( Sk\right) \left( \eta \right) =\frac{1}{m| \eta| }\sum_{x\in
\eta }e_{\la}(e^{-\phi(x-\cdot)},\eta \setminus x)\int_{\Ga
_{0}}k(\zeta \cup (\eta \setminus
x))e_{\la}(e^{-\phi(x-\cdot)}-1,\zeta )d\la (\zeta ),\notag
\end{align*}
that is quite similar of the so-called Kirkwood--Salsburg operator
known in mathematical physics (see e.g. \cite{Rue1969, KK2003a}).
For $s=0$ condition \eqref{statior-est} has form
$\frac{z}{C}e^{C\beta_{-1}}<1$. Under this
condition, the stationary solution to \eqref{stationaryeqn} is a
unique and coincides with the correlation function of the Gibbs
measure, corresponding to potential $\phi$ and activity $z$.
\end{remark}

\begin{remark} It is worth pointing out that $b(x,\emptyset)=0$
in the case of Example~\ref{ex-BDLP}. Therefore, if we suppose (cf.
\eqref{smallparBDLP-1}, \eqref{smallparBDLP-2}) that
$2\varkappa^-C<m$ and $2\varkappa^+a^+(x)\leq C\varkappa^-a^-(x)$,
for $x\in\X$, condition \eqref{statior-est} will be satisfied.
However, the unique solution to \eqref{stationaryeqn} will be given
by \eqref{deltasol}. In the next example we improve this statement.
\end{remark}

\begin{example}
Let us consider the following natural modification of BDLP-model
coming from Example~\ref{ex-BDLP}: let $d$ be given by
\eqref{BDLP-d} and
\begin{equation}
b(x,\ga)=\kappa+\varkappa^+\sum_{y\in\ga}a^+(x-y), \quad x\in\X\setminus\ga,\
\ga\in\Ga,\label{BDLP-b-mod}
\end{equation}
where $\varkappa^+,a^+$ are as before and $\kappa>0$. Then, under
assumptions
\begin{equation}\label{aaa1}
 2\max\Bigl\{\varkappa^-C;\frac{2\kappa }{C}\Bigr\}<m
\end{equation}
and
\begin{equation}\label{aaa2}
2\varkappa^+a^+(x)\leq C\varkappa^-a^-(x), \quad x\in\X,
\end{equation}
we obtain for some $\delta>0$
\begin{align*}
\int_{\Ga _{0}}\left\vert K^{-1}d\left( x,\cdot \cup \xi \right)
\right\vert \left( \eta \right) & C^{\left\vert \eta \right\vert
}d\la \left( \eta \right)=d(x,\xi)+C\varkappa^-\leq \Bigl(1+\frac{1}{2+\delta}\Bigr)d(x,\xi)\\
\int_{\Ga _{0}}\left\vert K^{-1}b\left( x,\cdot \cup \xi \right)
\right\vert \left( \eta \right) & C^{\left\vert \eta \right\vert }d\la
\left( \eta \right)=b(x,\xi)+C\varkappa^+
\\&\leq \kappa
+\frac{1}{2}C\varkappa^-\sum_{y\in\xi}a^-(x-y)+\frac{m}{4}C<\frac{C}{2}d(x,\xi).
\end{align*}
The latter inequalities imply \eqref{statior-est}. In this case,
$E(\eta)=\1_{\Ga^{(1)}}(\eta)\frac{\kappa}{m}$.
\end{example}

\begin{remark}
If $a^+(x)=a^-(x)$, $x\in\X$ and $\varkappa^+=z\varkappa^-$,
$\kappa=zm$ for some $z>0$ then $b(x,\ga)=zd(x,\ga)$ and the Poisson
measure $\pi_z$ with the intensity $z$ will be symmetrizing measure
for the operator $L$. In particular, it will be invariant measure.
This fact means that its correlation function $k_z(\eta)=z^{|\eta|}$
is a~solution to \eqref{stationaryeqn}. Conditions \eqref{aaa1} and
\eqref{aaa2} in this case are equivalent to $4z< C$ and
$2\varkappa^-C<m$. As a~result, due to uniqueness of such solution,
\[
1^*(\eta)+z(\1-S)^{-1}\1_{\Ga^{(1)}}(\eta)=z^{|\eta|}, \quad
\eta\in\Ga_0.
\]
\end{remark}

\section{Approximative approach for the Glauber dynamics}
In this section we consider an approximative approach for the construction of the Glauber-type dynamics described in Example~\ref{ex-Gl} for
\[
s=0, \quad m(x)\equiv 1, \quad z(x)\equiv z>0.
\]
Therefore, in such a case, \eqref{BaDGen} has the form
\begin{align}
(LF)(\gamma):=&\sum_{x\in\gamma} \bigl[F(\gamma\setminus x) -F(\gamma)\bigr]
\label{genGa} \\
&+ z \int_{{{\mathbb{R}}^d}} \bigl[F(\gamma\cup x) -F(\gamma)\bigr]\exp%
\bigl\{-E^\phi(x,\gamma)\bigr\} dx, \qquad \gamma\in\Gamma,  \notag
\end{align}
with $E^\phi$ given by \eqref{localenergy}.

Let $G\in {B_{\mathrm{bs}}}(\Gamma_0)$ then $F=KG\in{{\mathcal
F}_{\mathrm{cyl}}}(\Gamma )$. By \eqref{newexpr}, \eqref{K-1m}, \eqref{K-1-of-exp}, one has the following explicit form for the mapping $\widehat{L}:=K^{-1}LK$ on
${B_{\mathrm{bs}}}(\Gamma_0)$
\begin{multline}\label{Lhat1}
(\widehat{L}G)(\eta) =- |\eta| G(\eta) \\+ z
\sum_{\xi\subset\eta}\int_{{\mathbb R}^d} e^{-E^\phi(x,\xi)}
G(\xi\cup x)e_\lambda(e^{-\phi (x - \cdot)}-1,\eta\setminus\xi) dx ,
\end{multline}
where $e_\la$ is given by \eqref{Leb-Pois-exp}.

Let us denote, for any $\eta\in\Gamma_0$,
\begin{align}
(L_0 G)(\eta) &:= - |\eta| G(\eta);\label{L0}\\
(L_1 G)(\eta) &:= z \sum_{\xi\subset\eta}\int_{{\mathbb R}^d}
e^{-E^\phi(x,\xi)} G(\xi\cup x)e_\lambda(e^{-\phi (x -
\cdot)}-1,\eta\setminus\xi) dx.\label{L1}
\end{align}

To simplify notation we continue to write $C_{\phi}$ for $\beta_{-1}$. In contrast to \eqref{dom}, we will not work the maximal domain of the operator $L_0$. Namely, the following statement will be used
\begin{proposition}\label{prop-oper}
The expression \eqref{Lhat1} defines a linear operator $\widehat{L}$ in
${\mathcal L}_C$ with the dense domain ${\mathcal L}_{2C}\subset
{\mathcal L}_C$.
\end{proposition}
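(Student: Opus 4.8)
The plan is to verify two things: that $\hL=L_0+L_1$ from \eqref{L0}, \eqref{L1} maps $\L_{2C}$ into $\L_C$, and that $\L_{2C}$ is dense in $\L_C$. Together these say that \eqref{Lhat1} defines a linear operator in $\L_C$ with domain $\L_{2C}$. The inclusion $\L_{2C}\subset\L_C$ is immediate from $C^{|\eta|}\le (2C)^{|\eta|}$, and density follows because $\Bbs\subset\L_{2C}$ (a bounded function with bounded support is integrable against the weight $(2C)^{|\cdot|}d\la$) while $\Bbs$ is already dense in $\L_C$, as noted after \eqref{dom}.

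All the estimates rest on the elementary scalar bound $\sup_{n\ge 0} n\,2^{-n}=\tfrac12$, which gives $|\eta|\,C^{|\eta|}\le \tfrac12 (2C)^{|\eta|}$ for every $\eta\in\Ga_0$. For the diagonal part this yields at once
\[
\Vert L_0 G\Vert_C=\int_{\Ga_0}|\eta|\,|G(\eta)|\,C^{|\eta|}\,d\la(\eta)\le \tfrac12\Vert G\Vert_{2C}.
\]
The main step is the bound on $L_1$. Since the pair potential satisfies $\phi\ge 0$, we have $0< e^{-E^\phi(x,\xi)}\le 1$, so taking absolute values under the sum and integral in \eqref{L1} and applying Lemma~\ref{Minlos} with $H(\xi,\eta\setminus\xi,\eta)$ equal to $C^{|\eta|}$ times the inner $x$-integral converts the sum over $\xi\subset\eta$ into a double integral over $(\xi,\zeta)$, with $\eta=\xi\cup\zeta$ and $C^{|\eta|}=C^{|\xi|}C^{|\zeta|}$. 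Integrating first in $\zeta$ and using \eqref{intexp},
\[
\int_{\Ga_0} C^{|\zeta|}\bigl|e_\la(e^{-\phi(x-\cdot)}-1,\zeta)\bigr|\,d\la(\zeta)=\exp\Bigl\{C\int_\X\bigl|e^{-\phi(u)}-1\bigr|\,du\Bigr\}=e^{C C_\phi},
\]
independent of $x$ (recall $C_\phi=\beta_{-1}$, assumed finite). There remains $z\,e^{C C_\phi}\int_{\Ga_0}\int_\X|G(\xi\cup x)|\,dx\,C^{|\xi|}\,d\la(\xi)$, to which the Mecke-type identity \eqref{laMecke}, read from right to left, applies and gives $\int_{\Ga_0}|\eta|\,C^{|\eta|-1}|G(\eta)|\,d\la(\eta)$. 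One more use of the scalar bound then yields
\[
\Vert L_1 G\Vert_C\le \frac{z\,e^{C C_\phi}}{2C}\,\Vert G\Vert_{2C}.
\]

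Combining the two estimates gives $\Vert \hL G\Vert_C\le \tfrac12\bigl(1+\tfrac{z}{C}e^{C C_\phi}\bigr)\Vert G\Vert_{2C}<\infty$ for $G\in\L_{2C}$; the finiteness of the majorizing integrals justifies \emph{a posteriori} the interchanges of summation and integration (Tonelli) and the $\la$-a.e.\ pointwise meaning of $\hL G$. The point to highlight is that both $L_0$ and $L_1$ cost exactly one power of $|\eta|$ --- $L_0$ through the multiplier $|\eta|$ and $L_1$ through the birth point, i.e.\ through the Mecke step --- so a single doubling of the weight from $C$ to $2C$ is precisely what is needed. This is the reason one works with $\L_{2C}$ rather than the maximal domain of \eqref{dom}, and the only real care is in chaining Lemma~\ref{Minlos}, \eqref{intexp}, and \eqref{laMecke} while correctly tracking the weights $C^{|\cdot|}$.
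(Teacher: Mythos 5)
Your proof is correct and follows essentially the same route as the paper's: the splitting into $L_0$ and $L_1$, the bound $e^{-E^\phi}\le 1$, the chain Lemma~\ref{Minlos} $\to$ \eqref{intexp} $\to$ \eqref{laMecke}, the elementary comparison of $|\eta|$ with $2^{|\eta|}$, and density via $\Bbs\subset\L_{2C}$ are all exactly the paper's steps (your constant $\tfrac12$ from $\sup_n n2^{-n}$ is marginally sharper than the paper's crude $n<2^n$, but immaterial). Nothing to add.
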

\begin{proof}
For any $G\in {\mathcal L}_{2C} $
\[
\left\Vert L_{0}G\right\Vert_{C} =\int_{\Gamma_0} |G(\eta)||\eta|
C^{|\eta|} d\lambda ( \eta)<\int_{\Gamma_0} |G(\eta)|2^{|\eta|}
C^{|\eta|} d\lambda (\eta)<\infty
\]
and, by Lemma~\ref{Minlos},
\begin{align*}
&\quad \left\Vert L_{1}G\right\Vert_{C} \\&\leq z\int_{\Gamma
_{0}}\sum_{\xi \subset \eta }\int_{{{\mathbb
R}^d}}\,e^{-E^\phi(x,\xi )}\left\vert G(\xi \cup x)\right\vert
e_{\lambda }\left(\left\vert e^{-\phi (x-\cdot )}-1\right\vert ,\eta
\setminus \xi\right)dxC^{\left\vert \eta \right\vert }d \lambda \left( \eta
\right)  \\
&=z\int_{\Gamma_{0}}\int_{\Gamma_{0}}\int_{{{\mathbb
R}^d}}\,e^{-E^\phi(x,\xi )}\left\vert G(\xi \cup x)\right\vert
e_{\lambda }\left(\left\vert e^{-\phi (x-\cdot )}-1\right\vert ,\eta
\right)dxC^{\left\vert \eta \right\vert }C^{\left\vert \xi
\right\vert }d \lambda \left( \xi \right) d\lambda \left(
\eta \right)  \\
&\leq \frac{z}{C}\exp \left\{ CC_{\phi }\right\} \int_{\Gamma
_{0}}\left\vert G\left( \xi \right) \right\vert \left\vert \xi
\right\vert C^{\left\vert \xi \right\vert }d \lambda \left(  \xi
\right) < \frac{z}{C}\exp \left\{ CC_{\phi }\right\} \int_{\Gamma
_{0}}\left\vert G\left( \xi \right) \right\vert 2^{\left\vert \xi
\right\vert} C^{\left\vert \xi \right\vert }d \lambda \left(  \xi
\right)  \\&<\infty .
\end{align*}
Embedding ${\mathcal L}_{2C}\subset{\mathcal L}_C$ is dense since
${B_{\mathrm{bs}}}(\Gamma_0)\subset{\mathcal L}_{2C}$.
\end{proof}

\subsection{Description of approximation}

In this section we will use the symbol $K_0$ to denote the restriction of $K$ onto functions on $\Gamma_0$.

Let $\delta\in(0;1)$ be arbitrary and fixed. Consider for any
$\Lambda \in{\mathcal B}_{\mathrm{b}}({{\mathbb R}^d})$ the
following linear mapping on functions $F\in K_0 \bigl(B_{\mathrm{bs}}(\Gamma_0)\bigr)\subset \Fcyl$
\begin{align}\label{apprGa0}
\left( P_{\delta }^\Lambda  F\right) \left( \gamma \right)
=&\sum_{\eta \subset \gamma }\delta ^{\left\vert \eta \right\vert
}\left( 1-\delta \right) ^{\left\vert \gamma \setminus \eta
\right\vert }\bigl(\Xi^\Lambda_\delta\left( \gamma
\right)\bigr)^{-1}
\\ &\times \int_{\Gamma_{\Lambda }}\left( z\delta \right)
^{\left\vert \omega \right\vert }\prod\limits_{y\in \omega
}e^{-E^\phi\left( y,\gamma \right) }F\left( \left( \gamma \setminus
\eta \right) \cup \omega \right) d \lambda \left(  \omega
\right),\quad \gamma \in\Gamma_0,\notag
\end{align}
where
\begin{equation}\label{normfactor}
\Xi_\delta^\Lambda \left( \gamma \right) = \int_{\Gamma_{\Lambda
}}\left( z\delta \right) ^{\left\vert \omega \right\vert
}\prod\limits_{y\in \omega }e^{-E^\phi\left( y,\gamma \right)
}d \lambda \left(  \omega \right).
\end{equation}
Clearly, $P_{\delta }^\Lambda $ is a positive preserving mapping and
\[
\left( P_{\delta }^\Lambda  1\right) \left( \gamma
\right)=\sum_{\eta \subset \gamma }\delta ^{\left\vert \eta
\right\vert }\left( 1-\delta \right) ^{\left\vert \gamma \setminus
\eta \right\vert }=1, \quad \gamma \in\Gamma_0.
\]

Operator \eqref{apprGa0} is constructed as a transition operator of
a Markov chain, which is a time discretization of a continuous time
process with the generator \eqref{genGa} and discretization
parameter $\delta\in(0;1)$. Roughly speaking, according to the
representation \eqref{apprGa0}, the probability of transition
$\gamma \rightarrow (\gamma \setminus\eta)\cup\omega$ (which
describes removing of subconfiguration $\eta\subset\gamma $ and
birth of a new subconfiguration $\omega\in\Gamma(\Lambda) $) after
small time $\delta$ is equal to
\[
\bigl(\Xi_\delta^\Lambda (\gamma )\bigr)^{-1} \delta^{|\eta|}
(1-\delta)^{|\gamma \setminus\eta|}(z\delta)^{|\omega|}
\prod_{y\in\omega}e^{-E^\phi(y,\gamma )}.
\]

We may rewrite \eqref{apprGa0} in another manner.
\begin{proposition}
For any $F\in{{\mathcal F}_{\mathrm{cyl}}}(\Gamma_0)$ the following
equality holds
\begin{align}\label{anotherform}
\left( P_{\delta }^\Lambda  F\right) \left( \gamma \right) =&
\sum_{\xi \subset \gamma }\left( 1-\delta \right) ^{\left\vert \xi
\right\vert } \int_{\Gamma_{\Lambda }}\left( z\delta \right)
^{\left\vert \omega \right\vert }\prod\limits_{y\in \omega
}e^{-E^\phi\left( y,\gamma \right) }\\&\times (K_0^{-1}
F)\left( \xi \cup \omega \right) d \lambda \left(  \omega
\right).\notag
\end{align}
\end{proposition}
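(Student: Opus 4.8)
The plan is to substitute $F=K_0G$ with $G:=K_0^{-1}F$ into the defining expression \eqref{apprGa0} and to reorganize the resulting sums by the combinatorics of the $K$-transform together with the integration-by-parts identity of Lemma~\ref{Minlos}. First I would expand $F\bigl((\gamma\setminus\eta)\cup\omega\bigr)$ by the definition \eqref{K-transform} of $K_0$, writing it as $\sum_{\beta\subset(\gamma\setminus\eta)\cup\omega}G(\beta)$. Since for $\lambda$-almost all $\omega\in\Gamma_\Lambda$ the finite configuration $\omega$ is disjoint from the fixed $\gamma$ (the set of $\omega$ meeting $\gamma$ has $\lambda$-measure zero), each such $\beta$ splits uniquely as $\beta=\beta_1\cup\beta_2$ with $\beta_1\subset\gamma\setminus\eta$ and $\beta_2\subset\omega$, so that $\sum_{\beta\subset(\gamma\setminus\eta)\cup\omega}G(\beta)=\sum_{\beta_1\subset\gamma\setminus\eta}\sum_{\beta_2\subset\omega}G(\beta_1\cup\beta_2)$. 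All interchanges of these finite sums with the integral are legitimate on the domain where $P_\delta^\Lambda$ is defined, since there $G=K_0^{-1}F$ has bounded support and the integral runs over a bounded set in $\Gamma_\Lambda$.

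The next step is purely combinatorial. Writing $\xi:=\beta_1$, the double sum over $\eta\subset\gamma$ and $\beta_1\subset\gamma\setminus\eta$ runs over ordered pairs of disjoint subsets of $\gamma$, which I would reindex as $\sum_{\xi\subset\gamma}\sum_{\eta\subset\gamma\setminus\xi}$. Using $\gamma\setminus\eta=\xi\sqcup\bigl((\gamma\setminus\xi)\setminus\eta\bigr)$ to factor $(1-\delta)^{|\gamma\setminus\eta|}=(1-\delta)^{|\xi|}(1-\delta)^{|(\gamma\setminus\xi)\setminus\eta|}$, the inner sum over $\eta$ collapses by the binomial identity
\[
\sum_{\eta\subset\gamma\setminus\xi}\delta^{|\eta|}(1-\delta)^{|(\gamma\setminus\xi)\setminus\eta|}=\bigl(\delta+(1-\delta)\bigr)^{|\gamma\setminus\xi|}=1.
\]
After this collapse only the factor $(1-\delta)^{|\xi|}$ and the $\omega$-integral of $\sum_{\beta_2\subset\omega}G(\xi\cup\beta_2)$ survive, still divided by $\Xi_\delta^\Lambda(\gamma)$.

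It then remains to treat $\int_{\Gamma_\Lambda}(z\delta)^{|\omega|}\prod_{y\in\omega}e^{-E^\phi(y,\gamma)}\sum_{\beta_2\subset\omega}G(\xi\cup\beta_2)\,d\lambda(\omega)$. The key observation is that the weight is a coherent state: with $h(y):=z\delta\,e^{-E^\phi(y,\gamma)}\1_\Lambda(y)$ one has $(z\delta)^{|\omega|}\prod_{y\in\omega}e^{-E^\phi(y,\gamma)}=e_\lambda(h,\omega)$, cf. \eqref{Leb-Pois-exp}, and $e_\lambda$ is multiplicative, $e_\lambda(h,\omega)=e_\lambda(h,\beta_2)\,e_\lambda(h,\omega\setminus\beta_2)$. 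Applying Lemma~\ref{Minlos} to split $\omega$ into $\beta_2$ and $\omega':=\omega\setminus\beta_2$ turns this integral into the product
\[
\Bigl(\int_{\Gamma_\Lambda}e_\lambda(h,\omega')\,d\lambda(\omega')\Bigr)\Bigl(\int_{\Gamma_\Lambda}e_\lambda(h,\beta_2)\,G(\xi\cup\beta_2)\,d\lambda(\beta_2)\Bigr),
\]
where by \eqref{normfactor} the first factor is exactly $\Xi_\delta^\Lambda(\gamma)$. This cancels the normalizing prefactor, and renaming the dummy variable $\beta_2$ back to $\omega$ yields precisely \eqref{anotherform}.

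I expect the main obstacle to be the bookkeeping of the first step: justifying the clean factorization $\beta=\beta_1\cup\beta_2$ (which rests on the measure-zero disjointness of $\omega$ and $\gamma$) and verifying that the finiteness hypotheses permit both the interchange of the finite sums with the $\omega$-integral and the application of Lemma~\ref{Minlos}. Once the coherent-state structure of the weight is recognized, the remaining manipulations — the binomial collapse and the factorization of the split integral into a product with $\Xi_\delta^\Lambda(\gamma)$ — are routine, and the cancellation of $\Xi_\delta^\Lambda(\gamma)$ is what makes the normalization disappear in \eqref{anotherform}.
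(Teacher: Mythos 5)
Your proposal is correct and follows essentially the same route as the paper's proof: expand $F=K_0G$ inside \eqref{apprGa0}, reorganize the double sum over subsets of $\gamma$ so that the binomial identity collapses the $\eta$-summation to the factor $(1-\delta)^{|\xi|}$, and then apply Lemma~\ref{Minlos} to split the $\omega$-integral so that one factor reproduces $\Xi_\delta^\Lambda(\gamma)$ and cancels the normalization. The only cosmetic differences are that the paper first reindexes $\eta\mapsto\gamma\setminus\eta$ before expanding, and does not phrase the final factorization in coherent-state language, but the decomposition and the key lemma are identical.
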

\begin{proof} Let $G:=K_0^{-1}F\in {B_{\mathrm{bs}}}(\Gamma_0)$.
Since $\Xi_\delta^\Lambda $ doesn't depend on $\eta$, for $\gamma
\in\Gamma_0$ we have
\begin{align}\label{ex1}
\left( P_{\delta }^\Lambda  F\right) \left( \gamma \right)
=&\bigl(\Xi^\Lambda_\delta\left( \gamma \right)\bigr)^{-1}
\int_{\Gamma_{\Lambda }}\left( z\delta \right) ^{\left\vert \omega
\right\vert
}\prod\limits_{y\in \omega }e^{-E^\phi\left( y,\gamma \right) }\\
&\times \sum_{\eta \subset \gamma }\delta ^{\left\vert \gamma
\setminus \eta \right\vert } \left( 1-\delta \right) ^{\left\vert
\eta \right\vert } F\left( \eta \cup \omega \right) d \lambda \left(
\omega \right).\notag
\end{align}
To rewrite \eqref{apprGa0}, we have used also that any
$\eta\subset\gamma$ corresponds to a unique
$\gamma\setminus\eta\subset\gamma$. Applying the definition of $K_0$
to $F=K_0G$ we obtain
\begin{align}\label{ex2}
\sum_{\eta \subset \gamma }\delta ^{\left\vert \gamma \setminus
\eta \right\vert } \left( 1-\delta \right) ^{\left\vert \eta
\right\vert } F\left( \eta \cup \omega \right)&=\sum_{\eta \subset
\gamma }\delta ^{\left\vert \gamma \setminus \eta \right\vert }
\left(
1-\delta \right) ^{\left\vert\eta \right\vert } \sum_{\zeta\subset\eta}\sum_{\beta\subset\omega}G\left( \zeta \cup \beta \right)\\
&=\sum_{\zeta\subset\gamma}\sum_{\beta\subset\omega}G\left( \zeta \cup \beta \right)\sum_{\eta '\subset
\gamma\setminus\zeta }\delta ^{\left\vert \gamma \setminus( \eta'\cup\zeta) \right\vert }
\left(
1-\delta \right) ^{\left\vert\eta' \cup\zeta\right\vert },\notag
\end{align}
where after changing summation over $\eta\subset\gamma$ and
$\zeta\subset\eta$ we have used the fact that for any configuration
$\eta\subset\gamma$ which contains fixed $\zeta\subset\gamma$ there
exists a unique $\eta'\subset\gamma\setminus\zeta$ such that
$\eta=\eta'\cup\zeta$. But by the binomial formula
\begin{align}\label{ex3}
\sum_{\eta '\subset \gamma\setminus\zeta }\delta ^{\left\vert \gamma
\setminus( \eta'\cup\zeta) \right\vert } \left( 1-\delta \right)
^{\left\vert\eta' \cup\zeta\right\vert
}&=(1-\delta)^{|\zeta|}\sum_{\eta '\subset \gamma\setminus\zeta
}\delta ^{\left\vert (\gamma\setminus \zeta) \setminus \eta'
\right\vert } \left( 1-\delta \right) ^{\left\vert\eta'\right\vert }
\\&=(1-\delta)^{|\zeta|} (\delta+1-\delta)^{|\gamma
\setminus\zeta|}=(1-\delta)^{|\zeta|}.\notag
\end{align}
Combining \eqref{ex1}, \eqref{ex2}, \eqref{ex3}, we get
\begin{align*}
\left( P_{\delta }^\Lambda  F\right) \left( \gamma \right)\notag
=&\bigl(\Xi^\Lambda_\delta\left( \gamma \right)\bigr)^{-1}
\int_{\Gamma_{\Lambda }}\left( z\delta \right) ^{\left\vert \omega
\right\vert
}\prod\limits_{y\in \omega }e^{-E^\phi\left( y,\gamma \right) }\\
&\times
\sum_{\zeta\subset\gamma}\sum_{\beta\subset\omega}G\left( \zeta \cup
\beta \right) (1-\delta)^{|\zeta|} d \lambda \left(  \omega \right).
\end{align*}
Next, Lemma~\ref{Minlos} yields
\begin{align*}
\left( P_{\delta }^\Lambda  F\right) \left( \gamma
\right)=&\bigl(\Xi^\Lambda_\delta\left( \gamma \right)\bigr)^{-1}
\int_{\Gamma_{\Lambda }}\int_{\Gamma_{\Lambda }}\left( z\delta
\right) ^{\left\vert \omega \cup\beta\right\vert
}\prod\limits_{y\in \omega\cup\beta }e^{-E^\phi\left( y,\gamma \right) }\notag\\
&\times \sum_{\zeta\subset\gamma}G\left( \zeta \cup \beta
\right) (1-\delta)^{|\zeta|}d \lambda \left(  \omega \right) d \lambda \left(  \beta \right)\notag\\=&\int_{\Gamma_{\Lambda }}\left(
z\delta \right) ^{\left\vert \beta\right\vert }\prod\limits_{y\in
\beta }e^{-E^\phi\left( y,\gamma \right) }
\sum_{\zeta\subset\gamma}G\left( \zeta \cup \beta \right)
(1-\delta)^{|\zeta|} d \lambda \left(  \beta \right),\notag
\end{align*}
which proves the statement.
\end{proof}

In the next proposition we describe the image of $P_\delta^\Lambda $
under the $K_0$-transform.
\begin{proposition}
Let $\widehat{P}_\delta^\Lambda =K_0^{-1}P_\delta^\Lambda  K_0$. Then
for any $G\in {B_{\mathrm{bs}}}(\Gamma_0)$ the following equality
holds
\begin{align}\label{apprsemigroupLa}
\bigl(\widehat{P}_{\delta }^\Lambda  G\bigr) \left( \eta \right)
=&\sum_{\xi \subset \eta }\left( 1-\delta \right) ^{\left\vert \xi
\right\vert }\int_{\Gamma_{\Lambda }}\left( z\delta \right)
^{\left\vert \omega \right\vert }G\left( \xi \cup \omega \right)
\\&\times \prod\limits_{y\in \xi }e^{-E^\phi\left( y,\omega \right)
}\prod\limits_{y'\in \eta \setminus \xi }\left( e^{-E^\phi\left(
y',\omega \right) }-1\right) d \lambda \left(  \omega \right) , \quad
\eta\in\Gamma_0.\notag
\end{align}
\end{proposition}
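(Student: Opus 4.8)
The plan is to read off $\widehat{P}_\delta^\Lambda = K_0^{-1}P_\delta^\Lambda K_0$ directly from the representation \eqref{anotherform} already proved, and then to perform a Möbius-type inversion on $\Gamma_0$. First I would take $F = K_0 G$ with $G \in B_{\mathrm{bs}}(\Gamma_0)$, so that $K_0^{-1}F = G$, and substitute into \eqref{anotherform} to obtain
\begin{equation*}
(P_\delta^\Lambda K_0 G)(\gamma) = \sum_{\xi\subset\gamma}(1-\delta)^{|\xi|}\int_{\Gamma_\Lambda}(z\delta)^{|\omega|}\prod_{y\in\omega}e^{-E^\phi(y,\gamma)}\,G(\xi\cup\omega)\,d\lambda(\omega).
\end{equation*}
Applying $K_0^{-1}$ through \eqref{k-1transform} then expresses $(\widehat{P}_\delta^\Lambda G)(\eta)$ as the finite double sum $\sum_{\zeta\subset\eta}(-1)^{|\eta\setminus\zeta|}\sum_{\xi\subset\zeta}(\dots)$, whose integrand carries the factor $\prod_{y\in\omega}e^{-E^\phi(y,\zeta)}$ with the troublesome dependence on the dummy set $\zeta$ sitting inside the configuration variable $\omega$.

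The decisive observation is a symmetry coming from the evenness of $\phi$. Since $E^\phi(y,\zeta)=\sum_{w\in\zeta}\phi(y-w)$ by \eqref{localenergy}, the double product equals $\exp\{-\sum_{y\in\omega}\sum_{w\in\zeta}\phi(y-w)\}$, and because $\phi(y-w)=\phi(w-y)$ this double sum is symmetric under interchanging $\omega$ and $\zeta$; hence $\prod_{y\in\omega}e^{-E^\phi(y,\zeta)} = \prod_{y\in\zeta}e^{-E^\phi(y,\omega)}$. After this rewriting the $\zeta$-dependence is carried by $\zeta$ itself, and I would interchange the order of the two sums via the standard rule $\sum_{\zeta\subset\eta}\sum_{\xi\subset\zeta}=\sum_{\xi\subset\eta}\sum_{\xi\subset\zeta\subset\eta}$. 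For fixed $\xi\subset\eta$ I would parametrize the inner sum by $\zeta=\xi\sqcup\zeta'$ with $\zeta'\subset\eta\setminus\xi$, which splits $(-1)^{|\eta\setminus\zeta|}=(-1)^{|(\eta\setminus\xi)\setminus\zeta'|}$ and $\prod_{y\in\zeta}e^{-E^\phi(y,\omega)}=\prod_{y\in\xi}e^{-E^\phi(y,\omega)}\prod_{y\in\zeta'}e^{-E^\phi(y,\omega)}$, while the factors $(1-\delta)^{|\xi|}$ and $G(\xi\cup\omega)$ depend on $\xi$ only.

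It then remains to collapse the sum over $\zeta'$. Factoring out $\prod_{y\in\xi}e^{-E^\phi(y,\omega)}$, I would apply the elementary identity $\sum_{\zeta'\subset\tau}(-1)^{|\tau\setminus\zeta'|}\prod_{y\in\zeta'}f(y)=\prod_{y\in\tau}(f(y)-1)$ with $\tau=\eta\setminus\xi$ and $f(y)=e^{-E^\phi(y,\omega)}$, which produces precisely the factor $\prod_{y'\in\eta\setminus\xi}(e^{-E^\phi(y',\omega)}-1)$ of \eqref{apprsemigroupLa}; reassembling the remaining factors yields the claimed formula. The interchanges of summation and integration are harmless here: $\eta\in\Gamma_0$ is finite, so all sums over subsets are finite, and $G\in B_{\mathrm{bs}}(\Gamma_0)$ together with $C_\phi=\beta_{-1}<\infty$ makes each Lebesgue--Poisson integral over $\Gamma_\Lambda$ absolutely convergent (exactly as in the proof of Proposition~\ref{prop-oper}, invoking Lemma~\ref{Minlos}). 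I expect the conceptual crux to be the evenness-driven symmetry and the recognition of the $K_0^{-1}$-type collapse that manufactures the $(e^{-E^\phi}-1)$ factors; the convergence bookkeeping is routine.
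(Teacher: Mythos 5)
Your proposal is correct and follows essentially the same route as the paper's proof: substitute $F=K_0G$ into \eqref{anotherform}, apply $K_0^{-1}$, interchange the two subset sums, split the energy product using the symmetry of $\phi$ (which the paper invokes implicitly as ``the definition of the relative energy''), and collapse the inner alternating sum via the identity $\sum_{\zeta'\subset\tau}(-1)^{|\tau\setminus\zeta'|}\prod_{y\in\zeta'}f(y)=\prod_{y\in\tau}(f(y)-1)$. The only cosmetic difference is the order in which you perform the symmetry rewriting and the reordering of sums, which is immaterial.
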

\begin{proof}
By \eqref{anotherform} and the definition of $K_0^{-1}$, we have
\begin{align*}
&\quad\bigl(\widehat{P}_{\delta }^\Lambda  G\bigr) \left( \eta \right)\\
&=\sum_{\zeta\subset\eta}(-1)^{|\eta\setminus\zeta|}\sum_{\xi
\subset\zeta }\left( 1-\delta \right) ^{\left\vert \xi \right\vert }
\int_{\Gamma_{\Lambda }}\left( z\delta \right) ^{\left\vert \omega
\right\vert }\prod\limits_{y\in \omega }e^{-E^\phi\left( y,\zeta
\right) }G\left( \xi \cup \omega \right) d \lambda \left(  \omega
\right)\\&=\sum_{\xi\subset\eta} \left( 1-\delta \right)
^{\left\vert \xi \right\vert }\sum_{\zeta\subset\eta\setminus\xi }
(-1)^{|(\eta\setminus\xi)\setminus\zeta|}\int_{\Gamma_{\Lambda
}}\left( z\delta \right) ^{\left\vert \omega \right\vert
}\prod\limits_{y\in \omega }e^{-E^\phi\left( y,\zeta\cup\xi \right)
}G\left( \xi \cup \omega \right) d \lambda \left(  \omega \right).
\end{align*}
By the definition of the relative energy
\[
\prod\limits_{y\in \omega }e^{-E^\phi\left( y,\zeta\cup\xi
\right)}=\prod\limits_{y\in \xi }e^{-E^\phi\left( y,\omega\right)}
\prod\limits_{y'\in \zeta }e^{-E^\phi\left( y',\omega \right)}.
\]
The well-known equality (see, e.g., \cite{FKO2009})
\begin{align*}
\sum_{\zeta\subset\eta\setminus\xi } (-1)^{|(\eta
\setminus\xi)\setminus\zeta|}\prod\limits_{y'\in \zeta
}e^{-E^\phi\left( y',\omega
\right)}&=\Bigl(K_0^{-1}\prod\limits_{y'\in\cdot }e^{-E^\phi\left(
y',\omega \right)}\Bigl)(\eta\setminus\xi)\\&=\prod\limits_{y'\in
\eta \setminus \xi }\left( e^{-E^\phi\left( y',\omega \right)
}-1\right)
\end{align*}
completes the proof.
\end{proof}

\subsection{Construction of the semigroup on ${\mathcal L}_C$}

By analogy with \eqref{apprsemigroupLa}, we consider the following
linear mapping on measurable functions on $\Gamma_0$
\begin{align}\label{apprsemigroup}
\bigl(\widehat{P}_{\delta } G\bigr) \left( \eta \right) := & \sum_{\xi
\subset \eta }\left( 1-\delta \right) ^{\left\vert \xi \right\vert
}\int_{\Gamma_{0}}\left( z\delta \right) ^{\left\vert \omega
\right\vert }G\left( \xi \cup \omega \right)
\\&\times \prod\limits_{y\in \xi }e^{-E^\phi\left( y,\omega \right)
}\prod\limits_{y'\in \eta \setminus \xi }\left( e^{-E^\phi\left(
y',\omega \right) }-1\right) d \lambda \left(  \omega \right) , \quad
\eta\in\Gamma_0.\notag
\end{align}

\begin{proposition}\label{prop_contr}
Let
\begin{equation}
ze^{CC_{\phi }} \leq C. \label{smallparam}
\end{equation}
Then $\widehat{P}_\delta$, given by \eqref{apprsemigroup}, is a well
defined linear operator in ${\mathcal L}_C$, such that
\begin{equation}
\bigl\| \widehat{P}_\delta \bigr\|
\leq 1. \label{contacrion}
\end{equation}
\end{proposition}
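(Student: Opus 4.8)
The plan is to estimate the $\L_C$-norm of $\widehat{P}_\delta G$ directly from the definition \eqref{apprsemigroup} and show $\|\widehat{P}_\delta G\|_C\le\|G\|_C$; this simultaneously yields well-definedness on all of $\L_C$ and the contraction bound \eqref{contacrion}. The whole argument rests on the sign structure coming from $\phi\ge0$: since $E^\phi(y,\omega)=\sum_{w\in\omega}\phi(y-w)\ge0$, every factor $e^{-E^\phi(y,\omega)}$ lies in $(0,1]$, so each $e^{-E^\phi(y',\omega)}-1$ is nonpositive and $\prod_{y\in\xi}e^{-E^\phi(y,\omega)}\le1$. First I would move the absolute value inside all sums and integrals, replacing $|e^{-E^\phi(y',\omega)}-1|$ by $1-e^{-E^\phi(y',\omega)}$ and bounding $\prod_{y\in\xi}e^{-E^\phi(y,\omega)}\le1$, and then integrate $|(\widehat{P}_\delta G)(\eta)|$ against $C^{|\eta|}\,d\la(\eta)$. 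All integrands are nonnegative, so Tonelli justifies every interchange below.

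Second, I would decouple the sum $\sum_{\xi\subset\eta}$ via the Minlos-type identity of Lemma~\ref{Minlos}: writing $\eta=\xi\sqcup\eta'$ and $C^{|\eta|}=C^{|\xi|}C^{|\eta'|}$, the integral over $\eta$ becomes a double integral over $\xi$ and $\eta'$. The $\eta'$-part involves only the $e_\la$-product of the function $C\bigl(1-e^{-E^\phi(\cdot,\omega)}\bigr)$, so by \eqref{intexp} it equals $\exp\{C\int_\X(1-e^{-E^\phi(y',\omega)})\,dy'\}$. To control this exponent uniformly in $\omega$ I would use the elementary inequality $1-\prod_{w\in\omega}e^{-\phi(y'-w)}\le\sum_{w\in\omega}(1-e^{-\phi(y'-w)})$, valid since each $e^{-\phi(y'-w)}\in(0,1]$; integrating in $y'$ and recalling $C_\phi=\beta_{-1}=\int_\X(1-e^{-\phi(u)})\,du$ gives $\int_\X(1-e^{-E^\phi(y',\omega)})\,dy'\le|\omega|C_\phi$, so the $\eta'$-integral is at most $(e^{CC_\phi})^{|\omega|}$.

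Third, the remaining expression is a double integral over $\xi$ and $\omega$ of $(C(1-\delta))^{|\xi|}(z\delta e^{CC_\phi})^{|\omega|}|G(\xi\cup\omega)|$. I would recombine $\xi$ and $\omega$ into a single $\eta=\xi\cup\omega$ by reading Lemma~\ref{Minlos} in the reverse direction, turning the double integral back into $\int_{\Ga_0}|G(\eta)|\sum_{\xi\subset\eta}(C(1-\delta))^{|\xi|}(z\delta e^{CC_\phi})^{|\eta\setminus\xi|}\,d\la(\eta)$. The inner subset-sum collapses by the binomial identity $\sum_{\xi\subset\eta}a^{|\xi|}b^{|\eta\setminus\xi|}=(a+b)^{|\eta|}$ to $(C(1-\delta)+z\delta e^{CC_\phi})^{|\eta|}$. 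Finally the hypothesis \eqref{smallparam}, $ze^{CC_\phi}\le C$, gives $C(1-\delta)+z\delta e^{CC_\phi}\le C(1-\delta)+C\delta=C$, whence the whole quantity is at most $\int_{\Ga_0}|G(\eta)|C^{|\eta|}\,d\la(\eta)=\|G\|_C$.

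I expect the only delicate point to be the uniform bound on the $\eta'$-integration, i.e. passing from $\int_\X(1-e^{-E^\phi(y',\omega)})\,dy'$ to $|\omega|C_\phi$ through the subadditivity inequality for $1-\prod(\cdot)$; the two applications of Lemma~\ref{Minlos} and the binomial collapse are routine once the signs from $\phi\ge0$ are handled correctly, and the same chain of estimates shows finiteness of all integrals, giving well-definedness of $\widehat{P}_\delta$ on $\L_C$.
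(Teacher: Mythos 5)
Your argument is correct and follows essentially the same route as the paper's proof: pass absolute values inside using $\phi\ge 0$, decouple the subset sum with Lemma~\ref{Minlos}, evaluate the resulting $e_\la$-integral via \eqref{intexp}, control the exponent with the subadditivity bound \eqref{keypos} to get the factor $e^{CC_\phi|\omega|}$, and recombine so that the binomial collapse together with \eqref{smallparam} yields $\bigl((1-\delta)C+z\delta e^{CC_\phi}\bigr)^{|\eta|}\le C^{|\eta|}$. The same chain of nonnegative estimates gives finiteness $\la$-a.e., hence well-definedness, exactly as in the paper.
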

\begin{proof}
Since $\phi \geq 0$ we have
\begin{align*}
\left\Vert \widehat{P}_{\delta }G\right\Vert_{C}  \leq &\int_{\Gamma
_{0}}\sum_{\xi \subset \eta }\left( 1-\delta \right) ^{\left\vert
\xi \right\vert }\int_{\Gamma_{0}}\left( z\delta \right)
^{\left\vert \omega \right\vert }\left\vert G\left( \xi \cup \omega
\right) \right\vert \\ & \times \prod\limits_{y\in \xi
}e^{-E^\phi\left( y,\omega \right) }\prod\limits_{y'\in \eta
\setminus \xi }\left\vert e^{-E^\phi\left( y',\omega \right)
}-1\right\vert d \lambda \left(  \omega \right) C ^{\left\vert
\eta \right\vert } d \lambda \left(  \eta \right) \\
= &\int_{\Gamma_{0}}\int_{\Gamma_{0}}\left( 1-\delta \right)
^{\left\vert \xi \right\vert }\int_{\Gamma_{0}}\left( z\delta
\right) ^{\left\vert \omega \right\vert }\left\vert G\left( \xi \cup
\omega \right) \right\vert \\ &\times \prod\limits_{y\in \xi
}e^{-E^\phi\left( y,\omega \right) }\prod\limits_{y'\in \eta
}\left\vert e^{-E^\phi\left( y',\omega \right) }-1\right\vert
d \lambda \left(  \omega \right) C ^{\left\vert \eta \right\vert }C
^{\left\vert \xi \right\vert } d \lambda \left(  \xi
\right) d \lambda \left(  \eta \right) \\
=&\int_{\Gamma_{0}}\int_{\Gamma_{0}}\left( 1-\delta \right)
^{\left\vert \xi \right\vert }\left( z\delta \right) ^{\left\vert
\omega \right\vert }\left\vert G\left( \xi \cup \omega \right)
\right\vert \\ &\times \prod\limits_{y\in \xi }e^{-E^\phi\left(
y,\omega \right) }\exp \left\{ C\int_{{{\mathbb R}^d}}\left(
1-e^{-E^\phi\left( y',\omega \right) }\right) dy'\right\} d \lambda \left(  \omega \right) C ^{\left\vert \xi \right\vert } d \lambda \left(  \xi \right).
\end{align*}

It is easy to see by the induction principle that for $\phi \geq 0$,
$\omega\in\Gamma_0$, $y\notin\omega$
\begin{equation}\label{keypos}
1-e^{-E^\phi\left( y,\omega \right) }=1-\prod\limits_{x\in \omega
}e^{-\phi \left( x-y\right) }\leq \sum_{x\in \omega }\left(
1-e^{-\phi \left( x-y\right) }\right).
\end{equation}
Then
\begin{align*}
\bigl\Vert \widehat{P}_{\delta }G\bigr\Vert_{C} \leq &\int_{\Gamma
_{0}}\int_{\Gamma_{0}}\left( 1-\delta \right) ^{\left\vert \xi
\right\vert }\left( z\delta \right) ^{\left\vert \omega \right\vert
}\left\vert G\left( \xi \cup \omega \right) \right\vert \\
&\times \exp \left\{ C\sum_{x\in \omega }\int_{{{\mathbb
R}^d}}\left( 1-e^{-\phi \left( x-y\right) }\right) dy\right\}
d \lambda \left(  \omega \right) C ^{\left\vert \xi
\right\vert } d \lambda \left(  \xi \right) \\
=&\int_{\Gamma_{0}}\int_{\Gamma_{0}}\left( 1-\delta \right)
^{\left\vert \xi \right\vert }\left( z\delta \right) ^{\left\vert
\omega \right\vert }\left\vert G\left( \xi \cup \omega \right)
\right\vert e^{CC_{\phi }\left\vert \omega \right\vert }C
^{\left\vert \xi \right\vert
} d \lambda \left(  \omega \right) d \lambda \left(  \xi \right) \\
=&\int_{\Gamma_{0}}\left[ \left( 1-\delta \right) C+z\delta
e^{CC_{\phi }} \right] ^{\left\vert \omega \right\vert }\left\vert
G\left( \omega \right) \right\vert d \lambda \left(  \omega \right)
\leq \left\Vert G\right\Vert_{C}.
\end{align*}
For the last inequality we have used that \eqref{smallparam} implies
$\left( 1-\delta \right) C+z\delta e^{CC_{\phi }}\leq C$. Note that,
for $\lambda $-a.a. $\eta\in\Gamma_0$
\begin{equation}\label{correctness}
\bigl(\widehat{P}_\delta G \bigr) (\eta) <\infty,
\end{equation}
and the statement is proved.
\end{proof}

\begin{proposition}\label{prop_ineq}
Let the inequality \eqref{smallparam} be fulfilled and define
\[
\widehat{L}_\delta:=\frac{1}{\delta}(\widehat{P}_\delta-1\!\!1), \quad
\delta\in(0;1),
\]
where $1\!\!1$ is the identity operator in ${\mathcal L}_C$. Then
for any $G\in {\mathcal L}_{2C}$
\begin{equation}\label{ineq}
\bigl\| (\widehat{L}_\delta -\widehat{L})G\bigr\|_C\leq 3\delta \|G\|_{2C}.
\end{equation}
\end{proposition}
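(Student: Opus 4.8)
The plan is to expand $\widehat{P}_\delta$ in \eqref{apprsemigroup} according to the number of born points, i.e.\ by the cardinality $|\omega|$, and to match the pieces against the splitting $\widehat{L}=L_0+L_1$ from \eqref{L0}, \eqref{L1}. Putting $\omega=\emptyset$ annihilates every factor $e^{-E^\phi(y',\omega)}-1$ unless $\eta\setminus\xi=\emptyset$, so the $|\omega|=0$ part is exactly $(1-\delta)^{|\eta|}G(\eta)$; writing $\int_{\Gamma^{(1)}}(\cdots)\,d\lambda=\int_{\R^d}(\cdots)\,dx$ and using that $\phi$ is even, the $|\omega|=1$ part is $z\delta\sum_{\xi\subset\eta}(1-\delta)^{|\xi|}\int_{\R^d}e^{-E^\phi(x,\xi)}G(\xi\cup x)e_\lambda(e^{-\phi(x-\cdot)}-1,\eta\setminus\xi)\,dx$, which is $\delta$ times $L_1$ up to the extra weight $(1-\delta)^{|\xi|}$. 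Subtracting $\1$ and dividing by $\delta$ then exhibits $(\widehat{L}_\delta-\widehat{L})G$ as a sum of three error terms: a death term $A_\delta G(\eta)=\bigl(\tfrac{(1-\delta)^{|\eta|}-1}{\delta}+|\eta|\bigr)G(\eta)$, a first-order birth term $B_\delta G(\eta)=z\sum_{\xi\subset\eta}\bigl((1-\delta)^{|\xi|}-1\bigr)\int_{\R^d}e^{-E^\phi(x,\xi)}G(\xi\cup x)e_\lambda(e^{-\phi(x-\cdot)}-1,\eta\setminus\xi)\,dx$, and a remainder $R_\delta G$ collecting, after division by $\delta$, all contributions with $|\omega|\ge2$. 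I would bound each in $\|\cdot\|_C$ by $\delta\|G\|_{2C}$, producing the constant $3$ in \eqref{ineq}.

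For $A_\delta$ I would invoke the elementary two-sided estimate $0\le(1-\delta)^n-1+n\delta\le\binom{n}{2}\delta^2$ (Taylor with Lagrange remainder and $(1-\theta)^{n-2}\le1$), whence $\|A_\delta G\|_C\le\delta\int_{\Gamma_0}\binom{|\eta|}{2}|G(\eta)|C^{|\eta|}\,d\lambda(\eta)$, and then $\binom{n}{2}\le2^n$ turns the weight into $(2C)^{|\eta|}$, giving $\|A_\delta G\|_C\le\delta\|G\|_{2C}$. For $B_\delta$ I would first apply Bernoulli's inequality in the form $1-(1-\delta)^{|\xi|}\le|\xi|\delta$, then use $e^{-E^\phi(x,\xi)}\le1$ and $|e^{-\phi(x-\cdot)}-1|=1-e^{-\phi(x-\cdot)}$ (both because $\phi\ge0$), and run the same Minlos computation as in Proposition~\ref{prop_contr}: Lemma~\ref{Minlos} converts $\sum_{\xi\subset\eta}$ into a double $\Gamma_0$-integral, the $\eta$-integration yields the factor $e^{CC_\phi}$ through \eqref{keypos} and \eqref{intexp}, and a further use of \eqref{laMecke} merges $\xi\cup x$ into a single $\zeta$. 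This leaves $\|B_\delta G\|_C\le\tfrac{ze^{CC_\phi}}{C}\,\delta\int_{\Gamma_0}|\zeta|(|\zeta|-1)|G(\zeta)|C^{|\zeta|}\,d\lambda(\zeta)$; now \eqref{smallparam} gives $\tfrac{ze^{CC_\phi}}{C}\le1$ and $n(n-1)\le2^n$ gives $\|B_\delta G\|_C\le\delta\|G\|_{2C}$.

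For $R_\delta$ I would rerun the estimates of Proposition~\ref{prop_contr} essentially verbatim, but keeping track of $|\omega|$ and restricting the final summation to $|\omega|=|\zeta|-|\xi|\ge2$, obtaining $\|R_\delta G\|_C\le\tfrac1\delta\int_{\Gamma_0}|G(\zeta)|\sum_{j=0}^{|\zeta|-2}\binom{|\zeta|}{j}\bigl((1-\delta)C\bigr)^{j}\bigl(z\delta e^{CC_\phi}\bigr)^{|\zeta|-j}\,d\lambda(\zeta)$. Bounding $(1-\delta)C\le C$ and $z\delta e^{CC_\phi}\le C\delta$ via \eqref{smallparam}, the crucial gain is that $\tfrac1\delta(C\delta)^{|\zeta|-j}=C^{|\zeta|-j}\delta^{|\zeta|-j-1}$ carries $\delta^{|\zeta|-j-1}\le\delta$ precisely because $|\zeta|-j\ge2$; since $\sum_{j}\binom{|\zeta|}{j}\le2^{|\zeta|}$ this yields $\|R_\delta G\|_C\le\delta\|G\|_{2C}$. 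Summing the three bounds gives \eqref{ineq}.

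The calculations are routine once the decomposition is set up; the only delicate point is the bookkeeping that cleanly isolates the two leading orders ($|\omega|=0$ matching $L_0$ and $|\omega|=1$ matching $L_1$) so that each surviving error term carries an explicit power of $\delta$ together with at most a quadratic-in-$|\eta|$ weight. That quadratic weight is exactly what forces the stronger norm on the right of \eqref{ineq}: the inequalities $\binom{n}{2}\le2^n$ and $n(n-1)\le2^n$ are what convert $|\eta|^2C^{|\eta|}$ into $(2C)^{|\eta|}$, and this is the structural reason the domain of $\widehat{L}$ here is taken to be $\L_{2C}$ rather than $\L_C$.
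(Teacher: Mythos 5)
Your proposal is correct and follows essentially the same route as the paper's proof: the identical three-way splitting of $\widehat{P}_\delta$ by $|\omega|=0$, $|\omega|=1$, $|\omega|\ge 2$ matching $L_0$, $L_1$ and a remainder, with each piece bounded by $\delta\|G\|_{2C}$ via the same elementary inequalities ($|\frac{(1-\delta)^n-1}{\delta}+n|\le\binom{n}{2}\delta\le 2^n\delta$, Bernoulli, $n(n-1)\le 2^n$) and the same Minlos-lemma computations. The only cosmetic differences are deriving the $|\omega|=0$ estimate by Taylor with Lagrange remainder rather than the binomial expansion, and organizing the $|\omega|\ge 2$ bookkeeping through the binomial sum before extracting the factor $\delta^{|\omega|-1}\le\delta$.
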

\begin{proof}

Let us denote
\begin{align}
\bigl( \widehat{P}_{\delta }^{\left( 0\right) }G\bigr) \left( \eta
\right) =&\sum_{\xi \subset \eta }\left( 1-\delta \right)
^{\left\vert \xi \right\vert }G\left( \xi \right) 0^{\left\vert \eta
\setminus \xi \right\vert }=\left( 1-\delta \right) ^{\left\vert
\eta \right\vert }G\left(
\eta \right) ; \label{P0}\\
\bigl( \widehat{P}_{\delta }^{\left( 1\right) }G\bigr) \left( \eta
\right) =& \, z\delta \sum_{\xi \subset \eta }\left( 1-\delta \right)
^{\left\vert \xi \right\vert }\int_{{{\mathbb R}^d}}G\left( \xi \cup
x\right)\\&\times \prod\limits_{y\in \xi }e^{-\phi \left(
y-x\right) }\prod\limits_{y\in \eta \setminus \xi }\left( e^{-\phi
\left( y-x\right) }-1\right) dx; \label{P1}
\end{align}
and
\begin{equation}
\widehat{P}_{\delta }^{\left( \geq 2\right) }=\widehat{P}_{\delta }-\left(
\widehat{P}_{\delta }^{\left( 0\right) }+\widehat{P}_{\delta }^{\left(
1\right) }\right) \label{P2}.
\end{equation}

Clearly
\begin{align}\label{trin}
&\bigl\| (\widehat{L}_\delta -\widehat{L})G\bigr\|_C=\left\Vert
\frac{1}{\delta }\left( \widehat{P}_{\delta }G-G\right) -\widehat{L} G\right\Vert_{C} \\
&\quad \leq \left\Vert \frac{1}{\delta }\left( \widehat{P}_{\delta }^{\left(
0\right) }G-G\right) -L_{0}G\right\Vert_{C}+\left\Vert
\frac{1}{\delta }\widehat{P}_{\delta }^{\left( 1\right)
}G-L_{1}G\right\Vert_{C}+\frac{1}{\delta } \left\Vert
\widehat{P}_{\delta }^{\left( \geq 2\right) }G\right\Vert_{C}.\notag
\end{align}
Now we estimate each of the terms in \eqref{trin} separately. By
\eqref{L0} and \eqref{P0}, we have
\begin{equation*}
\left\Vert \frac{1}{\delta }\left( \widehat{P}_{\delta }^{\left(
0\right) }G-G\right) -L_{0}G\right\Vert_{C}  =\int_{\Gamma
_{0}}\left\vert \frac{\left( 1-\delta \right) ^{\left\vert \eta
\right\vert }-1}{\delta }+\left\vert \eta \right\vert \right\vert
\left\vert G\left( \eta \right) \right\vert C^{\left\vert \eta
\right\vert }d \lambda \left(  \eta \right).
\end{equation*}
But, for any $|\eta|\geq2$
\begin{align*}
\left\vert \frac{\left( 1-\delta \right) ^{\left\vert \eta
\right\vert }-1}{\delta }
+\left\vert \eta \right\vert \right\vert &=\left\vert\sum_{k=2}^{|\eta|} \binom{|\eta|}{k}(-1)^{k}\delta^{k-1}\right\vert\\
&=\delta\left\vert\sum_{k=2}^{|\eta|} \binom{|\eta|}{k}(-1)^{k}\delta^{k-2}\right\vert\leq \delta \sum_{k=2}^{|\eta|} \binom{|\eta|}{k} <\delta\cdot 2^{|\eta|}.
\end{align*}
Therefore,
\begin{equation}\label{est0}
\left\Vert \frac{1}{\delta }\left( \widehat{P}_{\delta }^{\left(
0\right) }G-G\right) -L_{0}G\right\Vert_{C}\leq \delta \|G\|_{2C}.
\end{equation}

Next, by \eqref{L1} and \eqref{P1}, one can write
\begin{align*}
\left\Vert \frac{1}{\delta }\widehat{P}_{\delta }^{\left( 1\right)
}G-L_{1}G\right\Vert_{C}  = & \, z\int_{\Gamma_{0}}\biggl\vert \sum_{\xi \subset \eta }\left(
\left( 1-\delta \right) ^{\left\vert \xi \right\vert }-1\right)
\int_{{{\mathbb{R}} ^{d}}}G\left( \xi \cup x\right)
\prod\limits_{y\in \xi }e^{-\phi \left( y-x\right) }\\& \times
\prod\limits_{y\in \eta \setminus \xi }\left( e^{-\phi \left(
y-x\right) }-1\right) dx\biggr\vert C^{\left\vert \eta \right\vert
}d \lambda \left(  \eta \right)  \\
\leq & \, z\int_{\Gamma_{0}}\int_{\Gamma_{0}}\left( 1-\left( 1-\delta
\right) ^{\left\vert \xi \right\vert }\right)
\int_{{{\mathbb{R}}^{d}}}\left\vert G\left( \xi \cup x\right)
\right\vert \prod\limits_{y\in \xi }e^{-\phi \left( y-x\right)
}\\& \times \prod\limits_{y\in \eta }\left( 1-e^{-\phi \left(
y-x\right) }\right) dxC^{\left\vert \xi \right\vert }C^{\left\vert
\eta \right\vert } d \lambda \left(  \xi \right) d \lambda \left(
\eta \right) ,
\end{align*}
where we have used Lemma~\ref{Minlos}. Note that for any $\left\vert
\xi \right\vert \geq 1$
\begin{equation*}
1-\left( 1-\delta \right) ^{\left\vert \xi \right\vert }=\delta
\sum_{k=0}^{\left\vert \xi \right\vert -1}\left( 1-\delta \right)
^{k}\leq \delta \left\vert \xi \right\vert
\end{equation*}
Then, by \eqref{smallparam} and \eqref{integrability}, one may
estimate
\begin{align}
\left\Vert \frac{1}{\delta }\widehat{P}_{\delta }^{\left( 1\right)
}G-L_{1}G\right\Vert_{C} \label{spec1} &\leq z\delta \int_{\Gamma_{0}}\left\vert \xi \right\vert
\int_{{{\mathbb{R}}^{d}}}\left\vert G\left( \xi \cup x\right)
\right\vert dxC^{\left\vert \xi \right\vert }e^{CC_{\phi }}d \lambda \left(  \xi \right)   \\
&\leq z \delta \int_{\Gamma_{0}}\left\vert \xi \right\vert \left(
\left\vert \xi \right\vert -1\right) \left\vert G\left( \xi \right)
\right\vert C^{\left\vert \xi \right\vert -1}e^{CC_{\phi }}d \lambda \left(  \xi \right) . \notag
\end{align}
Since $n\left( n-1\right) \leq 2^{n}$, $n\geq 1$ and by
\eqref{smallparam}, the latter expression can be bounded by
\[
\delta \int_{\Gamma_{0}}\left\vert G\left( \xi \right) \right\vert
\left( 2C\right) ^{\left\vert \xi \right\vert }\lambda \left(  d\xi
\right).
\]

Finally, Lemma~\ref{Minlos}, \eqref{keypos} and bound
$e^{-E^\phi(y,\omega)}\leq 1$, imply (we set here $\Gamma_{0}^{\left( \geq
2\right) }:=\bigsqcup_{n\geq2}\Gamma ^{(n)}$)
\begin{align}
\left\Vert \frac{1}{\delta }\widehat{P}_{\delta }^{\left( \geq 2\right)
}G\right\Vert_{C} \leq & \, \frac{1}{\delta }\int_{\Gamma_{0}}\sum_{\xi
\subset \eta }\left( 1-\delta \right) ^{\left\vert \xi \right\vert
}\int_{\Gamma_{0}^{\left( \geq 2\right) }}\left( z\delta \right)
^{\left\vert \omega \right\vert }\left\vert G\left(
\xi \cup \omega \right) \right\vert \label{spec01}\\
& \times \prod\limits_{y\in \xi }e^{-E^\phi\left( y,\omega
\right) }\prod\limits_{y\in \eta \setminus \xi }\left(
1-e^{-E^\phi\left( y,\omega \right) }\right) d \lambda \left(  \omega
\right) C ^{\left\vert \eta \right\vert }d \lambda \left(  \eta
\right) \notag
\\
\leq & \, \delta \int_{\Gamma_{0}}\sum_{\xi \subset \eta }\left(
1-\delta \right) ^{\left\vert \xi \right\vert }\int_{\Gamma
_{0}^{\left( \geq 2\right) }}z^{\left\vert \omega \right\vert
}\left\vert G\left( \xi \cup \omega \right) \right\vert \notag
\\
& \times \prod\limits_{y\in \xi }e^{-E^\phi\left( y,\omega
\right) }\prod\limits_{y\in \eta \setminus \xi }\left(
1-e^{-E^\phi\left( y,\omega \right) }\right) d \lambda \left(  \omega
\right) C
^{\left\vert \eta \right\vert } d \lambda \left(  \eta \right)  \notag\\
\leq &  \, \delta \int_{\Gamma_{0}}\sum_{\xi \subset \eta }\left(
1-\delta \right) ^{\left\vert \xi \right\vert }\int_{\Gamma
_{0}}z^{\left\vert \omega \right\vert }\left\vert G\left( \xi \cup
\omega \right) \right\vert \notag\\
&\times \prod\limits_{y\in \xi }e^{-E^\phi\left( y,\omega
\right) }\prod\limits_{y\in \eta \setminus \xi }\left(
1-e^{-E^\phi\left( y,\omega \right) }\right) d \lambda \left(  \omega
\right) C ^{\left\vert \eta \right\vert }d \lambda \left(  \eta \right)  \notag\\
\leq & \, \delta \int_{\Gamma_{0}}\int_{\Gamma_{0}}\left( 1-\delta
\right) ^{\left\vert \xi \right\vert }z^{\left\vert \omega
\right\vert }\left\vert G\left( \xi \cup
\omega \right) \right\vert \notag\\
& \times \int_{\Gamma_{0}}\prod\limits_{y\in \eta }\left(
1-e^{-E^\phi\left( y,\omega \right) }\right)  C ^{\left\vert \eta
\right\vert }d \lambda \left(  \eta \right) d \lambda \left(  \omega
\right) C ^{\left\vert \xi \right\vert }d\lambda \left( \xi \right)
\notag\\
\leq &  \, \delta \int_{\Gamma_{0}}\int_{\Gamma_{0}}\left( 1-\delta
\right) ^{\left\vert \xi \right\vert }z^{\left\vert \omega
\right\vert }\left\vert G\left( \xi \cup \omega \right) \right\vert
e^{CC_\phi\vert\omega\vert} d\lambda \left( \omega \right) C
^{\left\vert \xi \right\vert }d\lambda \left( \xi \right) \notag\\
\leq & \, \delta \int_{\Gamma_{0}}\left[ \left( 1-\delta \right)
C+ze^{CC_{\phi }}\right] ^{\left\vert \omega \right\vert }\left\vert
G\left(
\omega \right) \right\vert d\lambda \left( \omega \right)  \notag\\
\leq & \,  \delta \int_{\Gamma_{0}}\left[ \left( 2-\delta \right)
C\right] ^{\left\vert \omega \right\vert }\left\vert G\left( \omega
\right) \right\vert d\lambda \left( \omega \right) \leq \delta
\int_{\Gamma_{0}}\left\vert G\left( \omega \right) \right\vert
\left( 2C\right) ^{\left\vert \omega \right\vert }d\lambda \left(
\omega \right) .\notag
\end{align}
Combining inequalities \eqref{est0}--\eqref{spec01} we obtain the
assertion of the proposition.
\end{proof}

We will need the following results in the sequel.

\begin{lemma}[{cf. \cite[Corollary 3.8]{EK1986}}] \label{EK_res}
Let $A$ be a linear operator on a~Banach space $L$ with $D\left(
A\right) $ dense in $L$, and let $|\!|\!|\cdot |\!|\!|$ be a norm on
$D\left( A\right) $ with respect to which $D\left( A\right) $ is a
Banach space. For $n\in \mathbb{N}$ let $T_{n}$ be a linear
$\left\Vert \cdot \right\Vert $-contraction on $L$ such that
$T_{n}:D\left( A\right) \rightarrow D\left( A\right) $, and define
$A_{n}=n\left( T_{n}-1\right) $. Suppose there exist $\omega \geq 0$
and a sequence $\left\{ \varepsilon_{n}\right\} \subset \left(
0;+\infty \right) $ tending to zero such that for $n\in \mathbb{N}$
\begin{equation}\label{approperEK}
\left\Vert \left( A_{n}-A\right) f\right\Vert \leq \varepsilon_{n}
|\!|\!| f |\!|\!|,~f\in D\left( A\right)
\end{equation}
and
\begin{equation}\label{psevdocontr}
\bigl|\!\bigl|\!\bigl| T_{n}\upharpoonright_{D(A)}
\bigr|\!\bigr|\!\bigr| \leq 1+\frac{\omega }{n}.
\end{equation}
Then $A$ is closable and the closure of $A$ generates a strongly
continuous contraction semigroup on $L$.
\end{lemma}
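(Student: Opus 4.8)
The plan is to recognize this as a Trotter-type approximation statement and to construct the limiting semigroup by hand from $T_n(t):=e^{tA_n}$, then identify its generator with $\bar A$. First I would note that each $A_n=n(T_n-I)$ is a bounded operator (because $T_n$ is bounded), hence generates the uniformly continuous semigroup
\[
T_n(t)=e^{tA_n}=e^{-nt}\sum_{k=0}^\infty\frac{(nt)^k}{k!}T_n^k,\qquad t\ge0.
\]
Since $\|T_n^k\|\le1$, one gets $\|T_n(t)\|\le e^{-nt}e^{nt}=1$, so each $T_n(t)$ is a contraction on $(L,\|\cdot\|)$. Next, using $T_n:D(A)\to D(A)$ together with \eqref{psevdocontr}, iteration gives $|\!|\!|T_n^kf|\!|\!|\le(1+\omega/n)^k|\!|\!|f|\!|\!|$, and the same power series---now convergent in the complete space $(D(A),|\!|\!|\cdot|\!|\!|)$---yields $T_n(t):D(A)\to D(A)$ with
\[
|\!|\!|T_n(t)f|\!|\!|\le e^{-nt}e^{nt(1+\omega/n)}|\!|\!|f|\!|\!|=e^{\omega t}|\!|\!|f|\!|\!|,\qquad f\in D(A),
\]
a bound uniform in $n$.

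The technical heart is a telescoping estimate. For $f\in D(A)$, since $A_n,A_m$ are bounded and commute with their own semigroups,
\[
T_n(t)f-T_m(t)f=\int_0^t\frac{\partial}{\partial s}\bigl[T_n(t-s)T_m(s)f\bigr]\,ds=\int_0^t T_n(t-s)(A_m-A_n)T_m(s)f\,ds.
\]
Writing $A_m-A_n=(A_m-A)-(A_n-A)$ and using $T_m(s)f\in D(A)$, the contraction bound $\|T_n(t-s)\|\le1$, the hypothesis \eqref{approperEK}, and the triple-norm bound above, I obtain
\[
\|T_n(t)f-T_m(t)f\|\le(\varepsilon_n+\varepsilon_m)\,|\!|\!|f|\!|\!|\int_0^t e^{\omega s}\,ds,
\]
which tends to $0$ uniformly on compact $t$-intervals. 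Hence $(T_n(t)f)_n$ is Cauchy for every $f\in D(A)$; since $D(A)$ is dense and $\|T_n(t)\|\le1$, a three-$\varepsilon$ argument extends the convergence to all of $L$. Define $T(t)f:=\lim_n T_n(t)f$. As a strong limit of contraction semigroups, $T(t)$ is itself a $C_0$-contraction semigroup: the semigroup law and contractivity pass to the limit, and strong continuity follows because the convergence is uniform on compact intervals while each $T_n(\cdot)f$ is continuous. Let $B$ denote its generator.

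It remains to identify $B$ with $\bar A$. For $f\in D(A)$, integrating $T_n(t)f-f=\int_0^t T_n(s)A_nf\,ds$ and passing to the limit (using $\|A_nf-Af\|\le\varepsilon_n|\!|\!|f|\!|\!|\to0$ and the strong, uniformly bounded convergence of $T_n$) gives $T(t)f-f=\int_0^t T(s)Af\,ds$, so $f\in D(B)$ and $Bf=Af$; thus $A\subseteq B$, $A$ is closable, and $\bar A\subseteq B$. The main obstacle is the reverse inclusion, which I would settle with the resolvents. Fix $\lambda>\omega$. From the triple-norm bound, $R(\lambda,A_n)h=\int_0^\infty e^{-\lambda t}T_n(t)h\,dt$ maps $D(A)$ into $D(A)$ with $|\!|\!|R(\lambda,A_n)h|\!|\!|\le(\lambda-\omega)^{-1}|\!|\!|h|\!|\!|$, and dominated convergence gives $R(\lambda,A_n)\to R(\lambda,B)$ strongly. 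For $h\in D(A)$ set $f_n:=R(\lambda,A_n)h\in D(A)$; then $f_n\to R(\lambda,B)h$ and $A_nf_n=\lambda f_n-h\to\lambda R(\lambda,B)h-h=BR(\lambda,B)h$, while $\|Af_n-A_nf_n\|\le\varepsilon_n|\!|\!|f_n|\!|\!|\to0$ by the uniform triple-norm bound. Closedness of $\bar A$ then yields $R(\lambda,B)h\in D(\bar A)$ with $\bar A R(\lambda,B)h=BR(\lambda,B)h$. Since $D(A)$ is dense in $L$ and $R(\lambda,B)$ is bounded, approximating an arbitrary $h\in L$ by elements of $D(A)$ shows $D(B)=\mathrm{Ran}\,R(\lambda,B)\subseteq D(\bar A)$; combined with $\bar A\subseteq B$ this forces $\bar A=B$. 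Hence the closure of $A$ generates the $C_0$-contraction semigroup $T(t)$, as claimed.
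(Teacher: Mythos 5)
The paper does not prove this lemma at all: it is quoted as a black-box from Ethier--Kurtz \cite[Corollary 3.8]{EK1986}, so there is no in-paper argument to compare against. Your proof is a correct, self-contained reconstruction along the classical lines: you Poissonize ($T_n(t)=e^{tA_n}$ with $A_n$ bounded), use the two hypotheses to get the uniform bounds $\Vert T_n(t)\Vert\le 1$ and $|\!|\!|T_n(t)f|\!|\!|\le e^{\omega t}|\!|\!|f|\!|\!|$, run the standard telescoping identity to show $(T_n(t)f)_n$ is Cauchy uniformly on compacts, and then identify the generator of the limit semigroup with $\overline{A}$ via the resolvent computation, which in effect shows that $D(A)$ is a core for $B$; this is essentially the Trotter--Kato/Ethier--Kurtz mechanism, and every step checks out. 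The one point you pass over silently is the compatibility of the two norms: you take the exponential series (and later the Laplace-transform integral) simultaneously as a $\Vert\cdot\Vert$-limit in $L$ and as a $|\!|\!|\cdot|\!|\!|$-limit in the Banach space $\bigl(D(A),|\!|\!|\cdot|\!|\!|\bigr)$, and you identify the two limits. That identification needs the embedding $\bigl(D(A),|\!|\!|\cdot|\!|\!|\bigr)\hookrightarrow (L,\Vert\cdot\Vert)$ to be continuous (or some comparable compatibility), which is not literally among the stated hypotheses, although it is implicit in the intended setting and holds trivially in the paper's application, where $L=\L_C$, $D(A)=\L_{2C}$ and $\Vert G\Vert_C\le\Vert G\Vert_{2C}$. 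It would be worth stating this assumption explicitly; with it, the argument is complete.
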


\begin{lemma}[cf. {\cite[Theorem 6.5]{EK1986}}] \label{EK_res-conv}
Let $L, L_n$, $n\in{\mathbb N}$ be Banach spaces, and $p_n:
L\rightarrow L_n$ be bounded linear transformation, such that
$\sup_n \|p_n\|<\infty $. For any $n\in{\mathbb N}$, let $T_n$ be a
linear contraction on $L_n$, let $\varepsilon_n>0$ be such that
$\lim_{n\rightarrow \infty} \varepsilon_n =0$, and put
$A_n=\varepsilon_n^{-1}(T_n - 1\!\!1)$. Let $T(t)$ be a strongly
continuous contraction semigroup on $L$ with generator $A$ and let
$D$ be a core for $A$. Then the following are equivalent:
\begin{enumerate}
\item For each $f\in L$, $T_n^{[t/\varepsilon_n]} p_n f\rightarrow p_n
T(t) f$ in $L_n$ for all $t\geq0$ uniformly on bounded intervals.
Here and below $[\,\cdot\,\,]$ mean the entire part of a real
number.

\item For each $f\in D$, there exists $f_n\in L_n$ for each
$n\in{\mathbb N}$ such that $f_n \rightarrow p_n f$ and $A_n f_n
\rightarrow p_n Af$ in $L_n$.
\end{enumerate}
\end{lemma}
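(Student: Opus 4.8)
The plan is to prove the two implications separately, putting almost all of the effort into $(2)\Rightarrow(1)$, since this is the direction we actually use below to produce the Glauber semigroup as the limit of the discrete chains $\widehat{P}_\delta$. The unifying device is to compare the discrete iterates $T_n^{[t/\varepsilon_n]}$ with the genuine continuous-time semigroups generated by the bounded operators $A_n=\varepsilon_n^{-1}(T_n-\1)$, and then to transfer the problem to the classical Trotter--Kurtz approximation theorem on varying Banach spaces.

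First I would note that, since each $T_n$ is a $\|\cdot\|$-contraction, $A_n$ is bounded and generates the contraction semigroup
\[
S_n(t):=e^{tA_n}=e^{-t/\varepsilon_n}\sum_{k=0}^{\infty}\frac{(t/\varepsilon_n)^k}{k!}\,T_n^{k},\qquad t\ge0.
\]
The heart of the matter is a single comparison estimate. From $T_n^{k+1}g-T_n^{k}g=\varepsilon_n T_n^{k}A_ng$ and $\|T_n\|\le1$ one gets $\|T_n^{k}g-T_n^{m}g\|\le|k-m|\,\varepsilon_n\|A_ng\|$; averaging against the Poisson weights above and using that $\mathbb{E}\,|N-[\mu]|\le\sqrt{\mu}+1$ for $N\sim\mathrm{Poisson}(\mu)$ yields
\[
\bigl\|S_n(t)g-T_n^{[t/\varepsilon_n]}g\bigr\|\le\varepsilon_n\|A_ng\|\,e^{-t/\varepsilon_n}\sum_{k=0}^{\infty}\frac{(t/\varepsilon_n)^k}{k!}\bigl|k-[t/\varepsilon_n]\bigr|\le\bigl(\sqrt{t\varepsilon_n}+\varepsilon_n\bigr)\|A_ng\|.
\]

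For $(2)\Rightarrow(1)$ I would first feed condition $(2)$ into the continuous-time Trotter--Kurtz approximation theorem for varying spaces (cf.\ \cite{EK1986}): the existence, for each $f$ in the core $D$, of $f_n\to p_nf$ with $A_nf_n\to p_nAf$ is exactly the hypothesis giving $S_n(t)p_nf\to p_nT(t)f$ uniformly on bounded $t$-intervals. Since $A_nf_n$ converges we have $\sup_n\|A_nf_n\|<\infty$, so the displayed estimate with $g=f_n$ shows $\|S_n(t)f_n-T_n^{[t/\varepsilon_n]}f_n\|\to0$ uniformly on bounded intervals; together with $\|f_n-p_nf\|\to0$ and $\|T_n\|\le1$ this proves $T_n^{[t/\varepsilon_n]}p_nf\to p_nT(t)f$ for $f\in D$. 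The passage to arbitrary $f\in L$ is a routine $3\varepsilon$ argument exploiting the density of $D$ in $L$, the uniform bound $\sup_n\|p_n\|<\infty$, and the contractivity $\|T_n^{[t/\varepsilon_n]}\|\le1$, $\|T(t)\|\le1$.

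For the converse $(1)\Rightarrow(2)$ I would work through resolvents. The geometric-series identity
\[
R(\lambda,A_n)=\varepsilon_n\sum_{k=0}^{\infty}(1+\lambda\varepsilon_n)^{-(k+1)}T_n^{k}=\int_0^{\infty}(1+\lambda\varepsilon_n)^{-([t/\varepsilon_n]+1)}\,T_n^{[t/\varepsilon_n]}\,dt,\qquad\lambda>0,
\]
together with the pointwise limit $(1+\lambda\varepsilon_n)^{-([t/\varepsilon_n]+1)}\to e^{-\lambda t}$, an integrable domination of these scalar weights, the bound $\|T_n^{[t/\varepsilon_n]}\|\le1$, and assumption $(1)$, gives $R(\lambda,A_n)p_ng\to p_nR(\lambda,A)g$ for every $g\in L$ by dominated convergence. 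Then for $f\in D$ I set $g:=(\lambda-A)f$ and $f_n:=R(\lambda,A_n)p_ng$; since $R(\lambda,A)g=f$ we get $f_n\to p_nf$, while $A_nf_n=\lambda f_n-p_ng\to\lambda p_nf-p_ng=p_nAf$, which is $(2)$. The main obstacle in both directions is purely organizational---keeping every limit uniform on bounded $t$-intervals while shuttling between the spaces $L_n$ through the maps $p_n$---but the contraction hypotheses and $\sup_n\|p_n\|<\infty$ bound all error terms uniformly, so no genuinely new estimate beyond the comparison inequality above is required.
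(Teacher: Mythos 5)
Your argument is correct, and it is essentially the proof of the result the paper is quoting: the paper gives no proof of this lemma at all, citing it directly as \cite[Theorem 6.5]{EK1986}, and your reconstruction (the Poisson-averaged comparison $\Vert S_n(t)g-T_n^{[t/\varepsilon_n]}g\Vert\leq(\sqrt{t\varepsilon_n}+\varepsilon_n)\Vert A_ng\Vert$ feeding into the continuous-time Trotter--Kurtz theorem for one direction, and the discrete resolvent series plus dominated convergence for the converse) is precisely how Ethier and Kurtz prove it. No gaps to report.
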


And now we are able to show the existence of the semigroup on
${\mathcal L}_C$.

\begin{theorem}\label{semigroup0}
Let
\begin{equation}\label{verysmallparam}
    z\leq \min\bigl\{Ce^{-CC_{\phi }} ; 2Ce^{-2CC_{\phi }}\bigr\} .
\end{equation}
Then $\bigl(\widehat{L}, {\mathcal L}_{2C}\bigr)$ from
Proposition~\ref{prop-oper} is a closable linear operator in
${\mathcal L}_C$ and its closure $\bigl(\widehat{L}, D(\widehat{L})\bigr)$
generates a strongly continuous contraction semigroup $\widehat{T}_t$ on
${\mathcal L}_C$.
\end{theorem}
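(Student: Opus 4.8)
The plan is to deduce the statement from the Trotter--Kurtz-type approximation result Lemma~\ref{EK_res}, using the discrete Markov chain transition operators $\widehat{P}_\delta$ as the approximating contractions. Concretely, I would set $L=\mathcal{L}_C$, take $A$ to be $\bigl(\widehat{L},\mathcal{L}_{2C}\bigr)$ from Proposition~\ref{prop-oper}, and equip the domain $D(A)=\mathcal{L}_{2C}$ with the auxiliary norm $|\!|\!|\cdot|\!|\!|:=\|\cdot\|_{2C}$, under which $\mathcal{L}_{2C}=L^1\bigl(\Gamma_0,(2C)^{|\cdot|}d\lambda\bigr)$ is complete, i.e. a Banach space. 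For the contractions I would take $T_n:=\widehat{P}_{1/n}$, so that $A_n:=n(T_n-1\!\!1)=\widehat{L}_{1/n}$ in the notation of Proposition~\ref{prop_ineq}, and $\varepsilon_n:=3/n\to0$.

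The hypotheses of Lemma~\ref{EK_res} then fall out of the two conditions in \eqref{verysmallparam}. Density of $D(A)=\mathcal{L}_{2C}$ in $L=\mathcal{L}_C$ is exactly the last assertion of Proposition~\ref{prop-oper}. The first bound in \eqref{verysmallparam}, $z\leq Ce^{-CC_\phi}$, is precisely the hypothesis \eqref{smallparam} of Proposition~\ref{prop_contr}, which yields that each $T_n=\widehat{P}_{1/n}$ is a $\|\cdot\|_C$-contraction on $\mathcal{L}_C$. To verify that $T_n$ maps $D(A)$ into itself and satisfies \eqref{psevdocontr}, I would rerun the single estimate in the proof of Proposition~\ref{prop_contr} verbatim with $2C$ in place of $C$ throughout: since $\widehat{P}_\delta$ itself does not depend on $C$, the same computation gives
\[
\bigl\|\widehat{P}_\delta G\bigr\|_{2C}\leq\int_{\Gamma_0}\bigl[(1-\delta)(2C)+z\delta e^{2CC_\phi}\bigr]^{|\omega|}\bigl|G(\omega)\bigr|\,d\lambda(\omega),
\]
and the second bound in \eqref{verysmallparam}, $z\leq2Ce^{-2CC_\phi}$, i.e. $ze^{2CC_\phi}\leq2C$, forces the bracket to be $\leq2C$. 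Hence $\widehat{P}_\delta$ is in fact a genuine contraction on $\mathcal{L}_{2C}$, so $T_n:\mathcal{L}_{2C}\to\mathcal{L}_{2C}$ with $|\!|\!|T_n|\!|\!|\leq1$; thus \eqref{psevdocontr} holds with $\omega=0$. Finally, the approximation bound \eqref{approperEK} is immediate from Proposition~\ref{prop_ineq}: for $G\in\mathcal{L}_{2C}$,
\[
\bigl\|(A_n-A)G\bigr\|_C=\bigl\|(\widehat{L}_{1/n}-\widehat{L})G\bigr\|_C\leq\frac{3}{n}\|G\|_{2C}=\varepsilon_n\,|\!|\!|G|\!|\!|.
\]

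With all hypotheses in place, Lemma~\ref{EK_res} gives at once that $\widehat{L}$ is closable and that its closure generates a strongly continuous contraction semigroup $\widehat{T}_t$ on $\mathcal{L}_C$, which is the assertion. The only real work beyond bookkeeping is the $\mathcal{L}_{2C}$-contraction of $\widehat{P}_\delta$ needed for \eqref{psevdocontr}; this is exactly the point where the second, stronger half of \eqref{verysmallparam} (with the factor $e^{-2CC_\phi}$ arising from raising the one-particle estimate \eqref{keypos} to the heavier weight $2C$) is used, whereas the first half only secures the $\mathcal{L}_C$-contraction via Proposition~\ref{prop_contr}. I would double-check that $C_\phi=\beta_{-1}$ is a fixed constant independent of $C$, so that replacing $C$ by $2C$ in the estimate of Proposition~\ref{prop_contr} is legitimate, and note that obtaining $\omega=0$ rather than merely $\omega\geq0$ is harmless, since Lemma~\ref{EK_res} only requires $|\!|\!|T_n|\!|\!|\leq1+\omega/n$ for some $\omega\geq0$.
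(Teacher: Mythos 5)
Your proposal is correct and coincides with the paper's own proof: the authors also apply Lemma~\ref{EK_res} with $L=\mathcal{L}_C$, $A=(\widehat{L},\mathcal{L}_{2C})$, $|\!|\!|\cdot|\!|\!|=\|\cdot\|_{2C}$, $T_n=\widehat{P}_{1/n}$, $\varepsilon_n=3/n$ from Proposition~\ref{prop_ineq}, and obtain \eqref{psevdocontr} with $\omega=0$ by rerunning Proposition~\ref{prop_contr} with $2C$ in place of $C$ under the second half of \eqref{verysmallparam}. No discrepancies to report.
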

\begin{proof} We apply Lemma~\ref{EK_res} for $L={\mathcal L}_C$,
$\bigl(A,D(A)\bigr)=\bigl(\widehat{L}, {\mathcal L}_{2C}\bigr)$,
$|\!|\!| \cdot |\!|\!| :=\|\cdot\|_{2C}$; $T_{n}=\widehat{P}_{\delta}$
and $A_{n}=n\left( T_{n}-1\right) =\frac{1}{\delta } (
\widehat{P}_{\delta }-1\!\!1)=\widehat{L}_\delta$, where
$\delta=\frac{1}{n}$, $n\geq 2$.

Condition $ze^{CC_{\phi }}\leq C$, Proposition~\ref{prop_contr}, and
Proposition~\ref{prop_ineq} provide that $T_n$, $n\geq 2$ are linear
$\|\cdot\|_C$-contractions and \eqref{approperEK} holds with
$\varepsilon_n=\frac{3}{n}=3\delta$. On the other hand, in addition,
Proposition~\ref{prop_contr} applied to the constant $2C$ instead of
$C$ gives \eqref{psevdocontr} for $\omega=0$ under condition
$ze^{2CC_{\phi }}\leq 2C$.
\end{proof}

Moreover, since we proved the existence of the semigroup $\widehat{T}_t$
on ${\mathcal L}_C$ one can apply contractions $\widehat{P}_\delta$
defined above by \eqref{apprsemigroup} to approximate the semigroup
$\widehat{T}_t$.
\begin{corollary}\label{approx0}
Let \eqref{smallparam} holds. Then for any $G\in{\mathcal L}_C$
\[
\bigl(\widehat{P}_{\frac{1}{n}}\bigr)^{[nt]} G \rightarrow \widehat{T}_t G,
\quad n\rightarrow \infty
\]
for all $t\geq0$ uniformly on bounded intervals.
\end{corollary}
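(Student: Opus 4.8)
The plan is to deduce the statement directly from the Trotter--Kato type approximation result of Lemma~\ref{EK_res-conv}, specialised to the trivial identification of the approximating spaces with $L$ itself. First I would take $L=L_n={\mathcal L}_C$ for every $n$ and let $p_n=1\!\!1$ be the identity map, so that $\sup_n\|p_n\|=1<\infty$. Next I would set $T_n=\widehat{P}_{\frac1n}$, $\varepsilon_n=\frac1n$, and $A_n=\varepsilon_n^{-1}(T_n-1\!\!1)=n\bigl(\widehat{P}_{\frac1n}-1\!\!1\bigr)=\widehat{L}_{\frac1n}$ in the notation of Proposition~\ref{prop_ineq}. The limiting semigroup is $T(t)=\widehat{T}_t$ with generator $A=\bigl(\widehat{L},D(\widehat{L})\bigr)$ as produced by Theorem~\ref{semigroup0}; since $\widehat{L}$ was defined there as the closure of $\bigl(\widehat{L},{\mathcal L}_{2C}\bigr)$, the set $D:={\mathcal L}_{2C}$ is a core for $A$, as required by the lemma.

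The two hypotheses of Lemma~\ref{EK_res-conv} that need checking are that each $T_n$ is a contraction and that the core convergence in part~(2) holds. The contraction bound $\|\widehat{P}_{\frac1n}\|\leq1$ is exactly Proposition~\ref{prop_contr}, whose assumption \eqref{smallparam} is in force (and is implied by \eqref{verysmallparam}, the condition under which $\widehat{T}_t$ itself was built). For the convergence condition~(2) I would make the simplest possible choice of approximants, namely $G_n:=G$ for every $n$ and every $G\in D={\mathcal L}_{2C}$; then trivially $G_n\to p_nG=G$ in ${\mathcal L}_C$. It remains to verify $A_nG_n=\widehat{L}_{\frac1n}G\to\widehat{L}G$, and this is immediate from the quantitative estimate \eqref{ineq} of Proposition~\ref{prop_ineq}, which with $\delta=\frac1n$ gives
\[
\bigl\|(\widehat{L}_{\frac1n}-\widehat{L})G\bigr\|_C\leq\frac{3}{n}\,\|G\|_{2C}\xrightarrow[n\to\infty]{}0.
\]

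With condition~(2) verified, the equivalence in Lemma~\ref{EK_res-conv} forces condition~(1): for each $G\in{\mathcal L}_C$ one obtains $T_n^{[t/\varepsilon_n]}p_nG\to p_n\widehat{T}_tG$ uniformly on bounded intervals, which, since $\varepsilon_n=\frac1n$ and $p_n=1\!\!1$, reads precisely $\bigl(\widehat{P}_{\frac1n}\bigr)^{[nt]}G\to\widehat{T}_tG$, as claimed. I do not anticipate any genuine obstacle: the entire analytic difficulty has already been absorbed into Proposition~\ref{prop_ineq}, whose function is to upgrade the formal convergence $\widehat{L}_\delta\to\widehat{L}$ into a norm estimate on the core ${\mathcal L}_{2C}$. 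The only point deserving care is the bookkeeping of the two smallness conditions: \eqref{smallparam} is what makes the discrete operators contractive, while the slightly stronger \eqref{verysmallparam} is what guarantees existence of the limiting semigroup; one must read the corollary inside the regime of Theorem~\ref{semigroup0}, where both are available.
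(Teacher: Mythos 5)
Your proof is correct and is essentially identical to the paper's own argument, which likewise applies Lemma~\ref{EK_res-conv} with $L_n=L=\L_C$, $p_n=\1$, $T_n=\widehat{P}_{1/n}$, using Theorem~\ref{semigroup0} for the limiting semigroup and the estimate \eqref{ineq} on the core $\L_{2C}$ to verify condition~(2). Your closing remark about reading the corollary inside the regime of Theorem~\ref{semigroup0} (where \eqref{verysmallparam}, not just \eqref{smallparam}, holds so that $\widehat{T}_t$ actually exists) is a fair observation that the paper leaves implicit.
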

\begin{proof}
The statement is a direct consequence of Theorem~\ref{semigroup0},
convergence \eqref{ineq}, and Lemma~\ref{EK_res-conv} (if we set
$L_n=L={\mathcal L}_C$, $p_n=1\!\!1$, $n\in{\mathbb N}$).
\end{proof}

\subsection{Finite-volume approximation of $\widehat{T}_t$}

Note that $\widehat{P}_\delta$ defined by \eqref{apprsemigroup} is a
formal point-wise limit of $\widehat{P}_\delta^\Lambda $ as $\Lambda
\uparrow {{\mathbb R}^d}$. We have shown in \eqref{correctness} that
this definition is correct. Corollary~\ref{approx0} claims
additionally that the linear contractions $\widehat{P}_\delta$
approximate the semigroup $\widehat{T}_t$, when $\delta\downarrow 0$.
One may also show that mappings $\widehat{P}_\delta^\Lambda $ have
a~similar property when $\Lambda \uparrow{{\mathbb R}^d}$,
$\delta\downarrow 0$.

Let us fix a system $\{\Lambda_n\}_{n\geq 2}$, where $\Lambda
_n\in{\mathcal B}_{\mathrm{b}}({{\mathbb R}^d})$, $\Lambda
_n\subset\Lambda_{n+1}$, $\bigcup_{n} \Lambda_n={{\mathbb R}^d}$. We
set
\[
T_n:=\widehat{P}_{\frac{1}{n}}^{\Lambda_n}.
\]
Note that any $T_n$ is a linear mapping on ${B_{\mathrm{bs}}}(\Gamma
_0)$. We consider also the system of Banach spaces of measurable
functions on $\Gamma_0$
\[
{\mathcal L}_{C,n}:=\biggl\{ G:\Gamma(\Lambda_n)\rightarrow{\mathbb
R} \biggm| \|G\|_{C,n}:= \int_{\Gamma({\Lambda_n})} |G(\eta)|
C^{|\eta|} d\lambda ( \eta) <\infty\biggr\}.
\]
Let $p_n:{\mathcal L}_C\rightarrow{\mathcal L}_{C,n}$ be a cut-off
mapping, namely, for any $G\in{\mathcal L}_C$
\[
(p_n G)(\eta) = 1\!\!1_{\Gamma(\Lambda_n)} (\eta) G(\eta).
\]
Then, obviously, $\|p_n G\|_{C,n}\leq \|G\|_C$. Hence,
$p_n:{\mathcal L}_C\rightarrow{\mathcal L}_{C,n}$ is a linear
bounded transformation with $\|p_n\|=1$.

\begin{proposition}\label{approxn}
Let \eqref{smallparam} hold. Then for any $G\in{\mathcal L}_C$
\[
\bigl\| \bigl(T_n\bigr)^{[nt]} p_n G - p_n \widehat{T}_t
G\bigr\|_{C,n}\rightarrow 0, \quad n\rightarrow \infty
\]
for all $t\geq0$ uniformly on bounded intervals.
\end{proposition}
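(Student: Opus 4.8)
The plan is to apply Lemma~\ref{EK_res-conv} with $L={\mathcal L}_C$, $L_n={\mathcal L}_{C,n}$, the cut-off maps $p_n$ (for which $\sup_n\|p_n\|=1$), $T_n=\widehat{P}_{1/n}^{\Lambda_n}$, $\varepsilon_n=\frac1n$ (so that $[t/\varepsilon_n]=[nt]$), $A_n=n(T_n-\1)=\widehat{L}_{1/n}^{\Lambda_n}$, $T(t)=\widehat{T}_t$ and $A=\widehat{L}$. By Theorem~\ref{semigroup0} the latter is a strongly continuous contraction semigroup whose generator is the closure of $(\widehat{L},{\mathcal L}_{2C})$, so $D:={\mathcal L}_{2C}$ is a core for $A$. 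Since the conclusion of Proposition~\ref{approxn} is precisely assertion~(1) of Lemma~\ref{EK_res-conv}, it suffices to verify assertion~(2): for every $f\in{\mathcal L}_{2C}$ there are $f_n\in{\mathcal L}_{C,n}$ with $f_n\to p_nf$ and $A_nf_n\to p_n\widehat{L}f$ in ${\mathcal L}_{C,n}$. I would simply take $f_n=p_nf$, so that the first requirement is trivial and the whole problem reduces to the single convergence $\|A_n p_nf-p_n\widehat{L}f\|_{C,n}\to0$.

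The key observation is an exact finite/infinite-volume identity. Extending functions on $\Gamma(\Lambda_n)$ by zero to $\Gamma_0$, I claim that $(T_n(p_nf))(\eta)=(p_n\widehat{P}_{1/n}(p_nf))(\eta)$ for every $\eta\in\Gamma(\Lambda_n)$. Indeed, for such $\eta$ every $\xi\subset\eta$ lies in $\Lambda_n$, and since $p_nf$ is supported in $\Gamma(\Lambda_n)$ the integrand $(p_nf)(\xi\cup\omega)$ in \eqref{apprsemigroup} vanishes unless $\omega\subset\Lambda_n$; hence the $\omega$-integral over $\Gamma_0$ collapses to one over $\Gamma(\Lambda_n)$, which is exactly \eqref{apprsemigroupLa}. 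The same collapse, together with Proposition~\ref{prop_contr} (applicable because \eqref{verysmallparam} implies \eqref{smallparam}), shows that each $T_n$ is a $\|\cdot\|_{C,n}$-contraction, as required by the lemma. Consequently $A_n p_nf=p_n\widehat{L}_{1/n}(p_nf)$, and using $\|p_n\cdot\|_{C,n}\le\|\cdot\|_C$ I obtain
\[
\|A_n p_nf-p_n\widehat{L}f\|_{C,n}\le\bigl\|\widehat{L}_{1/n}(p_nf)-\widehat{L}f\bigr\|_C .
\]

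Finally I would split the right-hand side as $\widehat{L}_{1/n}(p_nf)-\widehat{L}f=(\widehat{L}_{1/n}-\widehat{L})(p_nf)+\widehat{L}(p_nf-f)$. For the first term, $p_nf\in{\mathcal L}_{2C}$ with $\|p_nf\|_{2C}\le\|f\|_{2C}$, so Proposition~\ref{prop_ineq} gives $\|(\widehat{L}_{1/n}-\widehat{L})(p_nf)\|_C\le\frac3n\|f\|_{2C}\to0$. For the second term, Proposition~\ref{prop-oper} shows that $\widehat{L}$ is bounded from ${\mathcal L}_{2C}$ into ${\mathcal L}_C$ (its proof yields $\|L_0G\|_C\le\|G\|_{2C}$ and $\|L_1G\|_C\le\frac zC e^{CC_\phi}\|G\|_{2C}$), whence $\|\widehat{L}(p_nf-f)\|_C\le\mathrm{const}\cdot\|p_nf-f\|_{2C}$, and $\|p_nf-f\|_{2C}=\int_{\Gamma_0\setminus\Gamma(\Lambda_n)}|f(\eta)|\,2^{|\eta|}C^{|\eta|}\,d\lambda(\eta)\to0$ by dominated convergence since $\Lambda_n\uparrow{\mathbb R}^d$. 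This establishes assertion~(2), and Lemma~\ref{EK_res-conv} then delivers assertion~(1), which is the claim. The main obstacle is precisely this volume error: the naive estimate $\frac1\delta\|p_nf-f\|_C$ diverges, and the point is to route the bound through ${\mathcal L}_{2C}$, exploiting both the exact identity $T_n(p_nf)=p_n\widehat{P}_{1/n}(p_nf)$ and the boundedness of $\widehat{L}\colon{\mathcal L}_{2C}\to{\mathcal L}_C$ to recover the extra decay that cancels the $1/\delta$ factor.
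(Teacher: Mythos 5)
Your proof is correct, and it follows the same overall framework as the paper (Lemma~\ref{EK_res-conv} with $f_n=p_nf$ for $f$ in the core ${\mathcal L}_{2C}$), but the way you establish the key convergence $\|A_np_nf-p_n\widehat{L}f\|_{C,n}\to0$ is genuinely different. The paper re-runs the three-term decomposition $T_n=T_n^{(0)}+T_n^{(1)}+T_n^{(\geq2)}$ of Proposition~\ref{prop_ineq} with the cut-off $\1_{\Lambda_n}(x)$ sitting inside the $T_n^{(1)}$ term, which forces a separate estimate of the factor $\bigl|(1-\tfrac1n)^{|\xi|}\1_{\Lambda_n}(x)-1\bigr|$ split over $x\in\Lambda_n$ and $x\in\Lambda_n^c$, the latter handled by dominated convergence as in the display following \eqref{spec1}. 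You instead isolate the exact restriction identity $T_n(p_nf)=p_n\widehat{P}_{1/n}(p_nf)$ on $\Gamma(\Lambda_n)$ — which does hold, since for $\eta\in\Gamma(\Lambda_n)$ and $G$ supported on $\Gamma(\Lambda_n)$ the $\omega$-integral in \eqref{apprsemigroup} collapses to $\Gamma_{\Lambda_n}$ and reproduces \eqref{apprsemigroupLa} — and then split the error as $(\widehat{L}_{1/n}-\widehat{L})(p_nf)+\widehat{L}(p_nf-f)$, disposing of the first piece by citing Proposition~\ref{prop_ineq} verbatim and of the second by the boundedness of $\widehat{L}\colon{\mathcal L}_{2C}\to{\mathcal L}_C$ extracted from the proof of Proposition~\ref{prop-oper} together with dominated convergence. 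This buys a cleaner, more modular argument that reuses the infinite-volume estimates instead of repeating them, at the small cost of having to state and justify the restriction identity explicitly; the paper's route avoids that identity but duplicates computation. One cosmetic point you have inherited from the paper rather than introduced: the statement assumes only \eqref{smallparam}, while the existence of the limiting semigroup $\widehat{T}_t$ (Theorem~\ref{semigroup0}), needed to invoke Lemma~\ref{EK_res-conv} at all, is proved under the stronger condition \eqref{verysmallparam}; your parenthetical remark acknowledges this, and it affects the paper's own proof equally.
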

\begin{proof}
The proof of the proposition is completed by showing that all
conditions of Lemma~\ref{EK_res-conv} hold. Using completely the
same arguments as in the proof of Proposition~\ref{prop_contr} one
gets that each $T_n=\widehat{P}_{\frac{1}{n}}^{\Lambda_n}$ is a linear
contraction on ${\mathcal L}_{C,n}$, $n\geq 2$ (note that for any
$n\geq 2$, \eqref{integrability} implies $\int_{\Lambda
_n}\bigl(1-e^{-\phi(x)}\bigr)dx\leq C_\phi<\infty$). Next, we set
$A_n=n (T_n - 1\!\!1_n)$ where $1\!\!1_n$ is a unit operator on
${\mathcal L}_{C,n}$ and let us expand $T_n$ in three parts
analogously to the proof of Proposition~\ref{prop_ineq}:
$T_n=T_n^{(0)}+T_n^{(1)}+T_n^{(\geq2)}$. As a result, $A_n=n
(T_n^{(0)} - 1\!\!1_n) + nT_n^{(1)}+nT_n^{(\geq2)}$. For any $G\in
{\mathcal L}_{2C}$ we set $G_n=p_n G\in{\mathcal
L}_{2C,n}\subset{\mathcal L}_{C,n}$. To finish the proof we have to
verify that for any $G\in {\mathcal L}_{2C}$
\begin{equation}\label{safcond}
\| A_n G_n - p_n \widehat{L}G \|_{C,n} \rightarrow 0, \quad n\rightarrow
\infty.
\end{equation}
For any $G\in {\mathcal L}_{2C}$
\begin{align}\label{innn}
\| A_n G_n - p_n LG \|_{C,n}  \leq & \, \| n(T_n^{(0)} - 1\!\!1_n) G_n -
p_n L_0 G \|_{C,n}\\& + \| nT_n^{(1)} G_n - p_n L_1 G \|_{C,n}
+ \| nT_n^{(\geq2)} G_n \|_{C,n}.\notag
\end{align}
Note, that $p_n L_0 G =L_0 G_n$. Using the same arguments as in the
proof of Proposition~\ref{prop_ineq} we obtain
\begin{equation*}
\| n(T_n^{(0)} - 1\!\!1_n) G_n - p_n L_0 G \|_{C,n} + \|
nT_n^{(\geq2)} G_n \|_{C,n} \leq \frac{2}{n} \|G\|_{2C,n} \leq
\frac{2}{n} \|G\|_{2C}.
\end{equation*}
Next,
\begin{align*}
& \| nT_n^{(1)} G_n - p_n L_1 G \|_{C,n}\\ \leq &\, z\int_{\Gamma
_{\Lambda_n}}\sum_{\xi \subset \eta }\int_{{{\mathbb R}^d}}
\left\vert\left( 1-\frac{1}{n} \right) ^{| \xi | } 1\!\!1_{\Lambda
_n} (x) -1\right\vert |G\left( \xi \cup x\right)| \\ & \times
\prod\limits_{y\in \xi }e^{-\phi \left( y-x\right)
}\prod\limits_{y\in \eta \setminus \xi }\left( 1- e^{-\phi \left(
y-x\right) }\right) dx  C^{\left\vert \eta \right\vert }d\lambda
\left( \eta \right)\\ \leq &\, z\int_{\Gamma(\Lambda_n)}\int_{\Gamma
({\Lambda_n})} \int_{{{\mathbb R}^d}} \left[ 1- \left( 1-\frac{1}{n}
\right) ^{| \xi | } 1\!\!1_{\Lambda_n} (x) \right] |G\left( \xi \cup
x\right)| \\ & \times \prod\limits_{y\in \eta }\left( 1-
e^{-\phi \left( y-x\right) }\right) dx C^{\left\vert \eta \cup \xi
\right\vert }d\lambda \left( \eta \right)d\lambda \left( \xi
\right)\\ \leq & \, C\int_{\Gamma(\Lambda_n)} \int_{{{\mathbb R}^d}}
\left[ 1- \left( 1-\frac{1}{n} \right) ^{| \xi | } 1\!\!1_{\Lambda
_n} (x) \right] |G\left( \xi \cup x\right)| dx C^{\left\vert \xi
\right\vert }d\lambda \left(
\xi \right),\\
\intertext{where we have used \eqref{integrability} and
\eqref{smallparam}. Using the same estimates as for \eqref{spec1} we
may continue} \leq &\, C\int_{\Gamma(\Lambda_n)} \int_{\Lambda_n}
\left[ 1- \left( 1-\frac{1}{n} \right) ^{| \xi | }  \right] |G\left(
\xi \cup x\right)| dx C^{\left\vert \xi \right\vert }d\lambda \left(
\xi \right) \\& + C\int_{\Gamma(\Lambda_n)} \int_{\Lambda
_n^c} |G\left( \xi \cup x\right)| dx
C^{\left\vert \xi \right\vert }d\lambda \left( \xi \right)\\
\leq &\, \frac{1}{n}\|G\|_{2C,n}+ C\int_{\Gamma_{0}} \int_{\Lambda
_n^c} |G\left( \xi \cup x\right)| dx C^{\left\vert \xi \right\vert
}d\lambda \left( \xi \right).
\end{align*}
But by the Lebesgue dominated convergence theorem,
\[
\int_{\Gamma_{0}} \int_{\Lambda_n^c} |G\left( \xi \cup x\right)| dx
C^{\left\vert \xi \right\vert }d\lambda \left( \xi \right)
\rightarrow 0, \quad n\rightarrow \infty.
\]
Indeed, $1\!\!1_{\Lambda_n^c} (x) |G\left( \xi \cup x\right)|
\rightarrow 0 $ point-wisely and may be estimated on $\Gamma
_0\times{{\mathbb R}^d}$ by $|G\left( \xi \cup x\right)|$ which is
integrable:
\[
C \int_{\Gamma_{0}} \int_{{{\mathbb R}^d}} |G\left( \xi \cup
x\right)| dx C^{\left\vert \xi \right\vert }d\lambda \left( \xi
\right) =\int_{\Gamma_0} |\xi| |G(\xi)| C^{\left\vert \xi
\right\vert }d\lambda \left( \xi \right) \leq \|G\|_{2C}<\infty.
\]
Therefore, by \eqref{innn}, the convergence \eqref{safcond} holds
for any $G\in{\mathcal L}_{2C}$, which completes the proof.
\end{proof}

\subsection{Evolution of correlation functions}

Under condition \eqref{verysmallparam}, we proceed now to the same arguments as in Subsection~\ref{subsect-evol-cf}. Namely, one can construct the restriction ${\widehat{T}}^\odot(t)$ of the semigroup of ${\widehat{T}}^\ast(t)$ onto the Banach space $\overline{D(%
{\widehat{L}}^\ast)}$ (recall that the closure is in the norm of ${\mathcal{K}}_C$).
Note that the domain of the dual operator to $(\widehat{L},\L_{2C})$ might be bigger than the domain considered in Subsection~\ref{subsect-evol-cf}. Nevertheless, ${\widehat{T}}^\odot(t)$ will be a $C_0$-semigroup on $\overline{D(%
{\widehat{L}}^\ast)}$ and its generator ${\widehat{L}} ^\odot$ will be a part of $%
{\widehat{L}}^\ast$, namely, \eqref{domLsundual} holds and ${\widehat{L}
}^\ast k ={\widehat{L}}^\odot k$ for any $k\in D({\widehat{L}}^\odot)$.

The next statement is a straightforward consequence of Proposition~\ref{pr3}.
\begin{proposition}
\label{domain-adj} For any $\a\in(0;1)$ the following inclusions
hold ${\mathcal{K}}_{\a C}\subset D({\widehat{L}}^\ast)\subset
\overline{D({\widehat{L}}^\ast)} \subset{\mathcal{K}}_C$.
\end{proposition}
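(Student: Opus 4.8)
The plan is to read off the first inclusion from Proposition~\ref{pr3} after checking its hypotheses in the present situation, and then to dispose of the other two inclusions by soft arguments. Recall that in this section $d(x,\ga)\equiv1$, so $D(\eta)=|\eta|$, while $b(x,\ga)=ze^{-E^\phi(x,\ga)}$ is an exponential rate of the form \eqref{exprate} with $c=-\phi$. First I would verify the three hypotheses \eqref{est-d}, \eqref{est-b} and \eqref{D-bdd}. These are precisely the computations already carried out in Example~\ref{ex-Gl}, specialized to $s=0$, $m\equiv1$, $z(x)\equiv z$: since the death rate is constant, \eqref{K-1m} gives $I_d(\xi)=|\xi|=D(\xi)$, so \eqref{est-d} holds with $a_1=1$; for the birth rate, \eqref{K-1-of-exp} and \eqref{intexp}, together with $\phi\ge0$ (whence $b(x,\xi\setminus x)=ze^{-E^\phi(x,\xi\setminus x)}\le z$), yield $I_b(\xi)\le ze^{CC_\phi}D(\xi)$, so \eqref{est-b} holds with $a_2=ze^{CC_\phi}<\infty$; and since $d(x,\xi)=1$, the bound \eqref{D-bdd} is trivial with $A=1$, $N=0$ and, crucially, $\nu=1$. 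Note that Proposition~\ref{pr3} imposes no smallness on $a_1,a_2$, so finiteness of $a_2$ is all that is needed here.

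With these constants in hand, Proposition~\ref{pr3} applies with $\nu=1$, so that $(0;\tfrac{1}{\nu})=(0;1)$, and delivers $\K_{\alpha C}\subset\Dom(\widehat{L}^\ast)$ for every $\alpha\in(0;1)$, which is exactly the first inclusion. The one point requiring care, and which I expect to be the only genuine obstacle, is that here $\widehat{L}$ carries the domain $\L_{2C}$ of Proposition~\ref{prop-oper} rather than the maximal domain $\D$ of Subsection~\ref{subsect-evol-qo} for which Proposition~\ref{pr3} is phrased. This discrepancy works in our favour: since $nC^n\le(2C)^n$ one has $\L_{2C}\subset\D$, so $(\widehat{L},\L_{2C})$ is a restriction of $(\widehat{L},\D)$, and passing to duals reverses the inclusion, so the domain of the dual of $(\widehat{L},\L_{2C})$ is at least as large as that of the dual of $(\widehat{L},\D)$. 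Hence the inclusion \eqref{incldom} furnished by Proposition~\ref{pr3} survives verbatim. Equivalently, I could simply re-run the proof of Proposition~\ref{pr3}: the estimate showing $k^\ast=\widehat{L}^\ast k\in\K_C$ does not involve the domain of $\widehat{L}$ at all, while the identity $\langle\!\langle \widehat{L}G,k\rangle\!\rangle=\langle\!\langle G,k^\ast\rangle\!\rangle$, established on the dense set $\Bbs$, extends to all $G\in\L_{2C}$ because $\widehat{L}\colon\L_{2C}\to\L_C$ is bounded.

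The remaining two inclusions are immediate and need no new input. The middle one, $\Dom(\widehat{L}^\ast)\subset\overline{\Dom(\widehat{L}^\ast)}$, holds because any set lies in its own closure. For the last one, $\widehat{L}^\ast$ acts on $(\L_C)'\cong\K_C$, so $\Dom(\widehat{L}^\ast)\subset\K_C$ by construction; since $\K_C$ is a Banach space and the closure is taken in its norm, $\overline{\Dom(\widehat{L}^\ast)}\subset\K_C$ as well. Thus the whole statement reduces to the specialization of Proposition~\ref{pr3} recorded above, the only subtlety being the bookkeeping around the two domains $\L_{2C}$ and $\D$.
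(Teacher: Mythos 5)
Your proposal is correct and follows essentially the same route as the paper, which simply records this proposition as ``a straightforward consequence of Proposition~\ref{pr3}''; your verification of \eqref{est-d}, \eqref{est-b}, \eqref{D-bdd} with $a_1=1$, $a_2=ze^{CC_\phi}$, $\nu=1$, and your remark that replacing the maximal domain $\D$ by $\L_{2C}\subset\D$ only enlarges the dual's domain, just make explicit the bookkeeping the paper leaves implicit (cf.\ its own comment that the domain of the dual of $(\widehat{L},\L_{2C})$ might be bigger than that of Subsection~\ref{subsect-evol-cf}).
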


Then, by Proposition~\ref{propLtriangle}, we immediately obtain that, for $k\in{\mathcal{K}}_{\a C}$,
\begin{align}  \label{dual-descent}
({\widehat{L}}^* k)(\eta)=&-\vert \eta \vert k(\eta) \\
&+z\sum_{x\in \eta}e^{-E^\phi (x,\eta\setminus x)} \int_{\Gamma_0}e_\lambda
(e^{-\phi (x - \cdot)}-1,\xi) k((\eta\setminus x)\cup\xi)\,d\lambda (\xi).
\notag
\end{align}

The next statement is an analog of Proposition~\ref{hint}.
\begin{proposition}
\label{invariantspace} Suppose that  (\ref{verysmallparam}) is satisfied.  Furthermore, we additionally assume that
\begin{equation}\label{new_z}
z< C e^{-CC_\phi},\quad\mathrm{if}\quad CC_\phi\leq \ln2.
\end{equation}
Then there exists $\a_0=\a_0(z,\phi,C)\in (0;1)$ such that for any $\a\in (\a_0;1)$ the set
${\mathcal{K}}_{\a C}$ is the ${\widehat{T}}^\ast(t)$-invariant linear subspace of
${\mathcal{K}}_C$.
\end{proposition}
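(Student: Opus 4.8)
The plan is to follow the scheme of Proposition~\ref{hint}. First I would construct, for $\alpha$ in a left neighbourhood of $1$, a strongly continuous contraction semigroup $\widehat{T}_\alpha(t)$ on the larger space $\L_{\alpha C}\supset\L_C$ whose generator is the closure of $(\widehat{L},\L_{2\alpha C})$; this is obtained simply by invoking Theorem~\ref{semigroup0} with $\alpha C$ in place of $C$. Next I would show that $\widehat{T}_\alpha(t)$ and $\widehat{T}(t)$ agree on $\L_C\subset\L_{\alpha C}$, and finally pass to the adjoint semigroups to transfer the invariance onto $\K_{\alpha C}$.

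The main obstacle is the first step: I must guarantee that the hypotheses of Theorem~\ref{semigroup0} genuinely survive the substitution $C\mapsto\alpha C$, i.e.\ that both $z\,e^{\alpha CC_\phi}\le\alpha C$ and $z\,e^{2\alpha CC_\phi}\le 2\alpha C$ hold for all $\alpha$ close to $1$. This is exactly where the extra assumption \eqref{new_z} enters. Setting $f(\alpha):=z\,e^{\alpha CC_\phi}/(\alpha C)$ and $h(\alpha):=z\,e^{2\alpha CC_\phi}/(2\alpha C)$, both continuous on $(0;1]$, I would argue near $\alpha=1$: when $CC_\phi\le\ln2$, condition \eqref{new_z} gives $z<Ce^{-CC_\phi}$, forcing $f(1)<1$ and $h(1)<e^{CC_\phi}/2\le1$ strictly, so continuity furnishes the neighbourhood; when $CC_\phi>\ln2$, condition \eqref{verysmallparam} already forces $z\le 2Ce^{-2CC_\phi}<Ce^{-CC_\phi}$, whence $f(1)<1$, while $h(1)\le1$ combined with the monotonicity of $\alpha\mapsto e^{2CC_\phi\alpha}/\alpha$ (increasing, since $2CC_\phi>1$) yields $h(\alpha)<1$ for $\alpha<1$ near $1$. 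In every case I obtain $\alpha_0\in(0;1)$ with $f(\alpha)\le1$ and $h(\alpha)\le1$ on $(\alpha_0;1)$, which is precisely \eqref{verysmallparam} for $\alpha C$.

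For the second step I would use the approximation rather than resolvents. Since $f(\alpha)\le1$ is \eqref{smallparam} for $\alpha C$ and the operators $\widehat{P}_\delta$ from \eqref{apprsemigroup} do not depend on the ambient space, Corollary~\ref{approx0} applies on both $\L_C$ and $\L_{\alpha C}$: for $G\in\L_C$ the iterates $(\widehat{P}_{1/n})^{[nt]}G$ converge to $\widehat{T}(t)G$ in $\L_C$ and to $\widehat{T}_\alpha(t)G$ in $\L_{\alpha C}$. Because $\|\cdot\|_{\alpha C}\le\|\cdot\|_C$, convergence in $\L_C$ forces convergence in $\L_{\alpha C}$ to the same element, so $\widehat{T}_\alpha(t)G=\widehat{T}(t)G$ on $\L_C$.

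The final step is a duality argument. For $k\in\K_{\alpha C}\subset\K_C$ and $G\in\L_C\subset\L_{\alpha C}$, the isometry \eqref{isometry} for $\alpha C$ gives $\widehat{T}_\alpha^{\ast}(t)k\in\K_{\alpha C}$, and using the agreement of the previous step,
\begin{equation*}
\langle\!\langle G,\widehat{T}^{\ast}(t)k\rangle\!\rangle
=\langle\!\langle \widehat{T}(t)G,k\rangle\!\rangle
=\langle\!\langle \widehat{T}_\alpha(t)G,k\rangle\!\rangle
=\langle\!\langle G,\widehat{T}_\alpha^{\ast}(t)k\rangle\!\rangle .
\end{equation*}
Since this holds for every $G\in\L_C$ and the pairing between $\L_C$ and $\K_C=(\L_C)^{\prime}$ is non-degenerate, I would conclude $\widehat{T}^{\ast}(t)k=\widehat{T}_\alpha^{\ast}(t)k\in\K_{\alpha C}$, which is the asserted $\widehat{T}^{\ast}(t)$-invariance of $\K_{\alpha C}$.
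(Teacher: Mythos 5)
Your proof is correct and follows essentially the same route as the paper: a contraction semigroup $\widehat{T}_\alpha(t)$ on $\L_{\alpha C}$ obtained from Theorem~\ref{semigroup0} applied with $\alpha C$ in place of $C$, consistency with $\widehat{T}(t)$ on $\L_C$ via the $\alpha$-independent approximants $\widehat{P}_\delta$, and the concluding duality argument. The only divergence is in verifying \eqref{verysmallparam} for $\alpha C$: the paper analyses the level set $\bigl\{x>0 : xe^{-x}\ge zC_\phi\bigr\}=[x_1,x_2]$ and produces an \emph{explicit} $\alpha_0$ satisfying in addition $\alpha_0\ge\max\{1/2,1/C\}$ and the chain \eqref{ineq-alpha} (facts which the later statements quoting ``$\alpha_0$ chosen as in the proof of Proposition~\ref{invariantspace}'' rely on), whereas your continuity/monotonicity argument at $\alpha=1$ yields only a non-explicit left neighbourhood of $1$ --- entirely sufficient for this proposition, but you would need to record an explicit $\alpha_0$ with those extra properties for the subsequent results to be quoted verbatim.
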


\begin{proof}
Let us consider function $f(x):=x e^{-x}$, $x\geq 0$. It has the following properties:  $f$ is increasing on $[0; 1]$ from $0$ to $e^{-1}$ and it is
asymptotically decreasing on $[1;+\infty)$ from $e^{-1}$ to $0$; $f(x) < f(2x)$ for $x \in (0, \ln 2)$; $x=\ln 2$ is the only  non-zero solution to
$f(x)=f(2x)$.

By assumption \eqref{verysmallparam}, $zC_\phi \leq \min\{CC_\phi e^{-CC_\phi},
2CC_\phi e^{-2CC_\phi}\}$. Therefore, if $CC_\phi e^{-CC_\phi}\neq 2CC_\phi e^{-2CC_\phi}$ then \eqref{verysmallparam} with necessity implies
\begin{equation}\label{less_e-1}
z C_\phi < e^{-1}.
\end{equation}
This inequality remains also true if $CC_\phi=\ln 2$ because of \eqref{new_z}.
Under condition \eqref{less_e-1}, the equation $f(x)=z C_\phi$ has
exactly two roots, say, $0<x_1<1<x_2<+\infty$. Then,
\eqref{new_z} implies $x_1< C C_\phi < 2 C C_\phi \leq x_2$.

If $CC_\phi>1$ then we  set $\a_0:=\max\left\{\frac{1}{2};\frac{1}{C
C_\phi};\frac{1}{C}\right\}<1$. This yields $2\a C C_\phi > C
C_\phi$ and $\a CC_\phi >1>x_{1}$. If $x_{1}<CC_\phi \leq 1$ then we
set $\a_0:=\max\left\{\frac{1}{2};\frac{x_{1}}{C
C_\phi};\frac{1}{C}\right\}<1$ that gives $2\a C C_\phi > C C_\phi$
and $\a CC_\phi >x_{1}$.

As a result,
\begin{equation}\label{ineq-alpha}
x_1<\a CC_\phi  < CC_\phi <2 \a C C_\phi < 2 C C_\phi \leq x_2
\end{equation}
and $1<\a C<C<2\a C<2C$. The last inequality shows that $\L_{2C
}\subset\L_{2\a C}\subset \L_C\subset \L_{\a C}$. Moreover, by
\eqref{ineq-alpha}, we may prove that the operator $(\hL , \L_{2\a
C})$ is closable in $\L_{\a C}$ and its closure is a generator of a
contraction semigroup $\hT_\a (t)$ on $\L_{\a C}$. The proof is
identical to the proofs above.

It is easy to see, that $\hT_\a (t) G= \hT (t) G$ for any
$G\in\L_C$. Indeed, from the construction of the semigroup $\hT
(t)$ and analogous construction for the semigroup
$\hT_\a (t)$, we have that there exists family of mappings
$\hP_\delta$, $\delta>0$ independent of $\a $ and $C$, given by
\eqref{apprsemigroup},
such that $\hP _\delta^{\left[\frac{t}{\delta}\right]}$ for any
$t\geq 0$ strongly converges to $\hT (t)$ and $\hT_\a (t)$ in $\L_C$
and $\L_{\a C}$, correspondingly, as $\delta\goto 0$. Here and below
$[\,\cdot\,]$ means the entire part of a number. Then for any
$G\in\L_{C }\subset\L_{\a C}$ we have that $\hT (t)G\in\L_{C
}\subset\L_{\a C}$ and $\hT_\a (t) G\in\L_{\a C}$ and
\begin{align*}
\| \hT (t)G-\hT_\a (t) G\|_{\a C}&\leq \Bigl\| \hT (t)G-\hP
_\delta^{\left[\frac{t}{\delta}\right]}G\Bigr\|_{\a C} + \Bigl\| \hT_\a (t)
G-\hP_\delta^{\left[\frac{t}{\delta}\right]}G\Bigr\|_{\a C}\\&\leq \Bigl\| \hT
(t)G-\hP_\delta^{\left[\frac{t}{\delta}\right]}G\Bigr\|_{ C} + \Bigl\| \hT_\a
(t) G-\hP_\delta^{\left[\frac{t}{\delta}\right]}G\Bigr\|_{\a C}\goto0,
\end{align*}
as $\delta\goto 0$. Therefore, $\hT (t)G=\hT_\a (t) G$ in $\L_{\a
C}$ (recall that $G\in\L_C$) that yields $ \hT (t)G(\eta)=\hT_\a (t)
G(\eta)$ for $\la$-a.a. $\eta\in\Ga_0$ and, therefore, $\hT
(t)G=\hT_\a (t) G$ in $\L_{C}$.

Note that for any $G\in\L_C\subset\L_{\a C}$ and for any $k\in
\K_{\a C}\subset \K_C$ we have $\hT_\a (t) G\in\L_{\a C}$ and
\begin{equation*}
\lluu   \hT_\a (t) G, k\rruu  =\lluu   G, \hT^\ast_\a (t) k\rruu ,
\end{equation*}
where, by construction, $\hT^\ast_\a (t) k\in\K_{\a C}$. But
$G\in\L_C$, $k\in\K_C$ implies
\begin{equation*}
\lluu   \hT_\a (t) G, k\rruu  =\lluu   \hT (t) G, k\rruu  =\lluu G,
\hT^\ast(t) k\rruu .
\end{equation*}
Hence, $\hT^\ast(t) k = \hT^\ast_\a (t) k\in\K_{\a C}$, $k\in\K_{\a C}$ that proves
the statement.
\end{proof}

\begin{remark}
As a result, \eqref{verysmallparam} implies that for any $k_0\in \overline{D({%
\widehat{L}}^\ast)}$ the Cauchy problem in ${\mathcal{K}}_C$
\begin{equation}
\begin{cases}
\dfrac{\partial}{\partial t} k_t = {\widehat{L}}^\ast k_t \\\label{cau1}
k_t \bigr|_{t=0} = k_0%
\end{cases}%
\end{equation}
has a unique mild solution: $k_t= {\widehat{T}}^\ast (t)k_0= {\widehat{T}}^\odot
(t)k_0\in\overline{D({\widehat{L}}^\ast)}$. Moreover, $k_0\in{\mathcal{K}}_{\a C}
$ implies $k_t\in{\mathcal{K}}_{\a C}$ provided \eqref{new_z} is satisfied.
\end{remark}

\begin{remark}
The Cauchy problem \eqref{cau1} is well-posed in $\mathring{\K}_C=\overline{D(%
{\widehat{L}}^\ast)}$, i.e., for every $k_0\in D({\widehat{L}}^\odot)$ there exists a unique solution $k_{t}\in\mathring{\K}_C$ of  \eqref{cau1}.
\end{remark}

Let \eqref{verysmallparam} and \eqref{new_z} be satisfied and let $\alpha_0$ be chosen as in the proof of Proposition~\ref%
{invariantspace} and fixed. Suppose that  $\alpha\in(\alpha_0;1)$. Then,
Propositions~\ref{domain-adj} and \ref{invariantspace} imply
$\overline{{\mathcal{K}}_{\a C}}\subset\overline{D(\widehat{L}^{\ast
})}$ and the Banach subspace $\overline{{\mathcal{K}}_{\a C}}$ is
${\widehat{T}}^\ast(t)$- and, therefore, ${\widehat{T}}^\odot(t)$-invariant
due to the continuity of these operators.

Let now ${\widehat{T}}^{\odot\a}(t)$ be the restriction of the strongly
continuous semigroup ${\widehat{T}}^\odot(t)$
onto the closed linear
subspace $\overline{{\mathcal{K}}_{\a C}}$.
By general result (see, e.g.,
\cite{EN2000}), ${\widehat{T}} ^{\odot\a}(t)$ is a strongly continuous
semigroups on $\overline{{\mathcal{K}}_{\a C}}$ with generator
${\widehat{L}}^{\odot\a}$ which is the restriction of the operator
${\widehat{L}}^\odot $. Namely,
\begin{equation}
D({\widehat{L}}^{\odot\a})=\Bigl\{k\in \overline{{\mathcal{K}}_{\a C}} \Bigm| {%
\widehat{L}}^\ast k\in\overline{{\mathcal{K}}_{\a C}} \Bigr\},
\label{domAdjtimes}
\end{equation}
and
\begin{equation}
{\widehat{L}}^{\odot\a} k = {\widehat{L}}^\odot k = {\widehat{L}}^\ast k, \qquad k\in D({%
\widehat{L}}^{\odot\a})  \label{restRRren}
\end{equation}

Since ${\widehat{T}}(t)$ is a contraction semigroup on $\L _C$, then, ${\widehat{T}}
^{\prime }(t)$ is also a contraction semigroup on $(\L _C)^{\prime }$; but
isomorphism \eqref{isometry} is isometrical, therefore, ${\widehat{T}}^\ast(t)$
is a contraction semigroup on ${\mathcal{K}}_C$. As a result, its restriction $%
{\widehat{T}}^{\odot\a}(t)$ is a contraction semigroup on $\overline{{\mathcal{K}}_%
{\a C}}$. Note also, that by \eqref{domAdjtimes},
\begin{equation*}
D_{\a C}:=\Bigl\{k\in {\mathcal{K}}_{\a C} \Bigm| {\widehat{L}}^\ast k\in%
\overline{{\mathcal{K}}_{\a C}} \Bigr\}
\end{equation*}
is a core for ${\widehat{L}}^{\odot\a}$ in $\overline{{\mathcal{K}}_{\a C}}$.

By \eqref{apprsemigroup}, for any $k\in {\mathcal{K}}_{{\a C}}$, $G\in B_{\mathrm{bs}}(\Gamma
_{0})$ we have
\begin{align*}
& \int_{\Gamma _{0}}(\widehat{P}_{\delta }G)\left( \eta \right) k\left( \eta
\right) d\lambda \left( \eta \right)  \\
=& \int_{\Gamma _{0}}\sum_{\xi \subset \eta }\left( 1-\delta \right)
^{\left\vert \xi \right\vert }\int_{\Gamma _{0}}\left( z\delta \right)
^{\left\vert \omega \right\vert }G\left( \xi \cup \omega \right)
\prod\limits_{y\in \xi }e^{-E^{\phi }\left( y,\omega \right) } \\
& \times \prod\limits_{y\in \eta \setminus \xi }\left( e^{-E^{\phi }\left(
y,\omega \right) }-1\right) d\lambda \left( \omega \right) k\left( \eta
\right) d\lambda \left( \eta \right)  \\
=& \int_{\Gamma _{0}}\int_{\Gamma _{0}}\left( 1-\delta \right) ^{\left\vert
\xi \right\vert }\int_{\Gamma _{0}}\left( z\delta \right) ^{\left\vert
\omega \right\vert }G\left( \xi \cup \omega \right) \prod\limits_{y\in \xi
}e^{-E^{\phi }\left( y,\omega \right) } \\
& \times \prod\limits_{y\in \eta }\left( e^{-E^{\phi }\left( y,\omega
\right) }-1\right) d\lambda \left( \omega \right) k\left( \eta \cup \xi
\right) d\lambda \left( \xi \right) d\lambda \left( \eta \right)  \\
=& \int_{\Gamma _{0}}\int_{\Gamma _{0}}\sum_{\omega \subset \xi }\left(
1-\delta \right) ^{\left\vert \xi \setminus \omega \right\vert }\left(
z\delta \right) ^{\left\vert \omega \right\vert }G\left( \xi \right)
\prod\limits_{y\in \xi \setminus \omega }e^{-E^{\phi }\left( y,\omega
\right) } \\
& \times \prod\limits_{y\in \eta }\left( e^{-E^{\phi }\left( y,\omega
\right) }-1\right) k\left( \eta \cup \xi \setminus \omega \right) d\lambda
\left( \xi \right) d\lambda \left( \eta \right) ,
\end{align*}%
therefore,%
\begin{align}\label{apprfordual}
(\widehat{P}_{\delta }^{\ast }k)\left( \eta \right)  =&\sum_{\omega \subset \eta
}\left( 1-\delta \right) ^{\left\vert \eta \setminus \omega \right\vert
}\left( z\delta \right) ^{\left\vert \omega \right\vert }\prod\limits_{y\in
\eta \setminus \omega }e^{-E^{\phi }\left( y,\omega \right) } \\
& \times \int_{\Gamma _{0}}\prod\limits_{y\in \xi }\left( e^{-E^{\phi
}\left( y,\omega \right) }-1\right) k\left( \xi \cup \eta \setminus \omega
\right) d\lambda \left( \xi \right) .\notag
\end{align}

\begin{proposition}\label{imp_prop}
Suppose that \eqref{verysmallparam} and \eqref{new_z} are fulfilled. Then, for any $k\in D_{\a C}$ and $\alpha\in(\alpha_{0},\,1)$, where $\alpha_0$ is chosen as in the proof of Proposition~\ref{invariantspace},
\begin{equation}  \label{apprrenest}
\lim_{\delta\rightarrow 0}\biggl\Vert \frac{1}{\delta}( {\widehat{P}}
^\ast_{\delta}-1\!\!1 )k - {\widehat{L}}^{\odot\a} k\biggr\Vert_{{\mathcal{K}}_C}
=0.
\end{equation}
\end{proposition}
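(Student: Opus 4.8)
The plan is to prove the slightly stronger statement that \eqref{apprrenest} holds for \emph{every} $k\in\K_{\a C}$ with $\hL^{\odot\a}k$ replaced by $\hL^\ast k$; this suffices because $D_{\a C}\subset\K_{\a C}$ and, by \eqref{restRRren}, $\hL^{\odot\a}k=\hL^\ast k$ there. Throughout I would work with the explicit pointwise formula \eqref{apprfordual} for $\hP_\delta^\ast$, the formula \eqref{dual-descent} for $\hL^\ast$, and the a.e.\ bound $|k(\eta)|\le\|k\|_{\K_{\a C}}(\a C)^{|\eta|}$. The first step is to split the sum over $\omega\subset\eta$ in \eqref{apprfordual} according to $|\omega|=0$, $|\omega|=1$ and $|\omega|\ge2$; this is exactly the adjoint of the decomposition $\hP_\delta=\hP_\delta^{(0)}+\hP_\delta^{(1)}+\hP_\delta^{(\ge2)}$ of \eqref{P0}--\eqref{P2}. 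Writing $\hL^\ast=L_0^\ast+L_1^\ast$ for the two summands of \eqref{dual-descent}, a triangle inequality reduces the claim to three separate $\esssup$-estimates for
\[
\Bigl\|\tfrac1\delta\bigl((\hP_\delta^{(0)})^\ast-\1\bigr)k-L_0^\ast k\Bigr\|_{\K_C},\quad
\Bigl\|\tfrac1\delta(\hP_\delta^{(1)})^\ast k-L_1^\ast k\Bigr\|_{\K_C},\quad
\tfrac1\delta\bigl\|(\hP_\delta^{(\ge2)})^\ast k\bigr\|_{\K_C}.
\]

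Next I would dispose of the first two, which are routine. For $|\omega|=0$ one reads off $(\hP_\delta^{(0)})^\ast k(\eta)=(1-\delta)^{|\eta|}k(\eta)$, so the first difference is $\bigl(\tfrac{(1-\delta)^{|\eta|}-1}{\delta}+|\eta|\bigr)k(\eta)$; the elementary bound $0\le\tfrac{(1-\delta)^n-1}{\delta}+n=\sum_{j=0}^{n-1}\bigl(1-(1-\delta)^j\bigr)\le\delta\binom n2$ together with $|k(\eta)|\le\|k\|_{\K_{\a C}}(\a C)^{|\eta|}$ gives a $\K_C$-norm at most $\delta\|k\|_{\K_{\a C}}\esssup_n\binom n2\a^n$. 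For $|\omega|=1$ the $\omega=\{x\}$ term reproduces $L_1^\ast$ up to the prefactor $(1-\delta)^{|\eta|-1}$, so the second difference equals $z\bigl((1-\delta)^{|\eta|-1}-1\bigr)$ times the $L_1^\ast$-integrand; bounding $|(1-\delta)^{|\eta|-1}-1|\le(|\eta|-1)\delta$, using $e^{-E^\phi}\le1$ and the Lebesgue--Poisson identity \eqref{intexp} in the form $\int_{\Ga_0}\bigl|e_\la(e^{-\phi(x-\cdot)}-1,\xi)\bigr|(\a C)^{|\xi|}\,d\la(\xi)=e^{\a C C_\phi}$, the $\K_C$-norm is at most $\delta\,\mathrm{const}(z,\phi,C)\,\|k\|_{\K_{\a C}}\esssup_n n(n-1)\a^{n-1}$. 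In both cases the supremum is finite because $\a<1$, so both terms are $O(\delta)$.

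The term $|\omega|\ge2$ is where the work lies. Dividing by $\delta$ turns $(z\delta)^{|\omega|}$ into $\delta^{|\omega|-1}$; I would bound $|e^{-E^\phi(y,\omega)}-1|\le\sum_{x\in\omega}(1-e^{-\phi(x-y)})$ via \eqref{keypos}, integrate the $\xi$-factor to $e^{\a C C_\phi|\omega|}$, and use $|k(\xi\cup(\eta\setminus\omega))|\le\|k\|_{\K_{\a C}}(\a C)^{|\eta|-|\omega|+|\xi|}$. Summing over $\omega\subset\eta$ with $|\omega|\ge2$ and setting $q:=ze^{\a C C_\phi}/(\a C)$ yields, for $n=|\eta|$, the bound $\|k\|_{\K_{\a C}}\a^{n}\,\delta^{-1}\bigl[(1+q\delta)^{n}-1-nq\delta\bigr]$. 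The inequality $e^{t}-1-t\le\tfrac{t^2}{2}e^{t}$ ($t\ge0$) then gives $\delta^{-1}\bigl[(1+q\delta)^n-1-nq\delta\bigr]\le\tfrac{\delta q^2}{2}n^2e^{nq\delta}$, so the $\esssup$ over $\eta$ is at most $\tfrac{\delta q^2}{2}\|k\|_{\K_{\a C}}\esssup_n n^2(\a e^{q\delta})^n$. For all $\delta$ small enough that $\a e^{q\delta}<1$ — possible precisely because $\a<1$ and $C_\phi<\infty$ by \eqref{integrability} — this supremum stays uniformly bounded while the prefactor $\tfrac{\delta q^2}{2}\to0$, so the third term also vanishes as $\delta\to0$.

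The main obstacle, and the reason the estimate lives on $\K_{\a C}$ rather than on all of $\K_C$, is that a purely abstract duality argument does not close: transposing the quasi-observable bound $\|(\hL_\delta-\hL)G\|_{\a C}\le3\delta\|G\|_{2\a C}$ (the analogue of Proposition~\ref{prop_ineq} at scale $\a C$, valid since \eqref{ineq-alpha} gives $ze^{2\a C C_\phi}<2\a C$) produces an operator $\K_{\a C}\to\K_{2\a C}$, and $\K_{2\a C}\supsetneq\K_C$ carries a \emph{weaker} norm than the one required. The convergence is recovered only through the direct pointwise computation above, in which the weight $\a^{|\eta|}$ carried by $k\in\K_{\a C}$, with $\a<1$ strictly, absorbs the polynomial- and exponential-in-$|\eta|$ factors that would otherwise make the defining $\esssup$ of $\|\cdot\|_{\K_C}$ diverge. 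Thus the strict inequality $\a<1$, i.e.\ working inside the invariant subspace $\overline{\K_{\a C}}$ of Proposition~\ref{invariantspace}, is exactly what drives the argument.
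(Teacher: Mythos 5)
Your proposal is correct and follows essentially the same route as the paper's proof: the same decomposition of $\widehat{P}_\delta^{\ast}$ into the $|\omega|=0$, $|\omega|=1$ and $|\omega|\ge 2$ pieces of \eqref{apprfordual}, the same elementary inequalities for the first two, and a direct $\esssup$ estimate in which the weight $\a^{|\eta|}$, $\a<1$, absorbs the polynomial growth in $|\eta|$. The only (immaterial) difference is in the $|\omega|\ge 2$ term, where the paper keeps the factors $(1-\delta)^{|\eta\setminus\omega|}$ and evaluates the resulting binomial sum exactly to get the uniform bound $\delta\,|\eta|(|\eta|-1)$, whereas you discard them and invoke $e^{t}-1-t\le\tfrac{t^{2}}{2}e^{t}$, which costs you the extra (harmless) restriction $\a e^{q\delta}<1$ for $\delta$ small.
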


\begin{proof}
Let us recall \eqref{dual-descent} and define
\begin{align*}
(\hP_{\delta }^{\ast, (0) }k)\left( \eta \right) =&\,(1-\delta)^\n k(\eta);\\
(\hP_{\delta }^{\ast, (1) }k)\left( \eta \right) =&\,z\delta
\sum_{x\in \eta
}\left( 1-\delta \right)^{\left\vert \eta \right\vert -1} e_\la\left( e^{-\phi \left( x -\cdot\right) },\eta
\setminus x \right) \notag\\&\times\int_{\Gamma_{0}}e_{\lambda
}\left( e^{-\phi \left( x -\cdot\right)
}-1,\xi \right) k\left( \xi \cup \eta \setminus x
\right) d\lambda \left( \xi \right);
\end{align*}
and $\hP_{\delta }^{\ast, (\geq 2) } = \hP_{\delta }^{\ast } -
\hP_{\delta }^{\ast, (0) } -  \hP_{\delta }^{\ast, (1)
}$.

We will use the following elementary inequality, for any
$n\in\N\cup\{0\}$, $\delta\in(0;1)$
\begin{align*}
0 \leq n-   \frac{1-(1-\delta)^n}{\delta}\leq\delta \frac{n(n-1)}{2}.
\end{align*}
Then, for any $k\in\K_\aC$ and $\lambda$-a.a. $\eta\in\Gamma_{0}$, $\eta\neq\emptyset$
\begin{align}
&C^{-\en}\biggl\vert\frac{1}{\delta}( \hP^{\ast,(0)}_{\delta,\eps}-\1 )k(\eta) + |\eta|k(\eta)\biggr\vert\notag\\
\leq& \,\Vert k\Vert_{\K_\aC} \a^\en \biggl\vert \en-
\frac{1-(1-\delta)^\en}{\delta}\biggr\vert\leq \frac{\delta}{2} \Vert
k\Vert_{\K_\aC} \a^\en \en (\en-1)\label{eq1}
\end{align}
and the function $\a^x x(x-1)$ is bounded for $x\geq 1$, $\a\in(0;1)$.
Next, for any $k\in\K_\aC$ and $\lambda$-a.a. $\eta\in\Gamma_{0}$, $\eta\neq\emptyset$
\begin{align}
&C^{-\en}\biggl\vert\frac{1}{\delta} \hP ^{\ast,(1)}_{\delta}k(\eta)
-z\sum_{x\in\eta}\int_{\Ga_0} e_{\la  }\left( e^{- \phi \left(
x-\cdot \right) },\eta\setminus x \right)
\notag\\&\qquad\qquad\times e_{\la }\left( e^{-\phi \left( x-\cdot
\right) }-1,\xi   \right)
k\left( \xi \cup \eta\setminus x\right) d\la(\xi)\biggr\vert\notag\\
\leq\, & \Vert k\Vert_{\K_\aC}
\frac{z}{\aC}\a^\en\sum_{x\in\eta}\bigl(1-\left( 1-\delta
\right)^{\left\vert \eta \right\vert -1} \bigr)
\int_{\Gamma_{0}}e_{\lambda }\left( \a C \bigl(e^{-\phi \left( x-\cdot \right) }-1\bigr),\xi \right)  d\lambda \left( \xi \right)\notag\\
 \leq\, & \Vert k\Vert_{\K_\aC} \frac{z}{\aC} \a^\en\sum_{x\in\eta}\bigl(1-\left( 1-\delta \right)^{\left\vert \eta
\right\vert -1} \bigr) \exp{\{\a C C_\phi\}}\notag\\
\leq\,& \Vert k\Vert_{\K_\aC}\frac{z}{\aC} \a^\en \delta \en (\en-1)
\exp{\{\a C C_\phi\}}.\label{eq2}
\end{align}
which is small in $\delta$ uniformly by $\en$. Now, using inequality
\[
1-e^{-E^\phi\left( y,\omega \right) }=1-\prod\limits_{x\in \omega
}e^{-\phi \left( x-y\right) }\leq \sum_{x\in \omega }\left(
1-e^{-\phi \left( x-y\right) }\right),
\]
we obtain
\begin{align}
&\frac{1}{\delta }C^{-\left\vert \eta \right\vert } \sum_{\substack{
\omega \subset \eta  \\ \left\vert \omega \right\vert \geq 2}}
\left( 1-\delta \right)^{\left\vert \eta \setminus \omega
\right\vert }\left( z\delta \right)^{\left\vert \omega \right\vert
}e_\la\left( e^{- E^{\phi }\left( \cdot ,\omega \right) },\eta
\setminus \omega \right)\notag\\&\qquad\times\int_{\Gamma_{0} }e_\la\left(
\Bigl\vert e^{- E^{\phi }\left( \cdot ,\omega \right) }-1
\Bigr\vert,\xi \right) |k( \xi
\cup \eta \setminus \omega ) | d\lambda \left( \xi \right) \notag \\
=\,&\Vert k\Vert_{\K_\aC}\alpha^{\left\vert \eta \right\vert }\frac{1}{\delta }\sum_{\substack{ %
\omega \subset \eta  \\ \left\vert \omega \right\vert \geq 2}}\left(
1-\delta \right)^{\left\vert \eta \setminus \omega \right\vert
}\left( \frac{z\delta }{\alpha C}\exp \left\{ \alpha C
C_\phi\right\} \right)
^{\left\vert \omega \right\vert }; \notag\\
\intertext{recall that $\a>\a_0$, therefore, $z\exp\{\aC
C_\phi\}\leq \aC$, and one may continue} \leq\, &\Vert k\Vert_{\K_\aC}
\alpha ^{\left\vert \eta \right\vert }\frac{1}{\delta }\sum
_{\substack{ \omega \subset \eta  \\ \left\vert \omega \right\vert \geq 2}}%
\left( 1-\delta \right)^{\left\vert \eta \setminus \omega
\right\vert }\delta^{\left\vert \omega \right\vert }\notag\\=\,&\Vert
k\Vert_{\K_\aC}\delta \alpha ^{\left\vert \eta \right\vert
}\sum_{k=2}^{\left\vert \eta \right\vert }\frac{\left\vert \eta
\right\vert !}{k!\left( \left\vert \eta \right\vert -k\right)
!}\left(
1-\delta \right)^{\left\vert \eta \right\vert -k}\delta^{k-2} \notag\\
=\, &\Vert k\Vert_{\K_\aC}\delta \alpha^{\left\vert \eta \right\vert
}\sum_{k=0}^{\left\vert \eta \right\vert -2}\frac{\left\vert \eta
\right\vert !}{\left( k+2\right) !\left( \left\vert \eta \right\vert
-k-2\right) !}\left( 1-\delta \right)
^{\left\vert \eta \right\vert -k-2}\delta^{k} \notag\\
=\, &\Vert k\Vert_{\K_\aC}\delta \alpha^{\left\vert \eta \right\vert
}\left\vert \eta \right\vert \left( \left\vert \eta \right\vert
-1\right) \sum_{k=0}^{\left\vert \eta \right\vert -2}\frac{\left(
\left\vert \eta \right\vert -2\right) !}{\left( k+2\right) !\left(
\left\vert \eta \right\vert -k-2\right) !}\left( 1-\delta
\right)^{\left\vert \eta \right\vert -2-k}\delta^{k} \notag\\
\leq\,  &\Vert k\Vert_{\K_\aC}\delta \alpha^{\left\vert \eta
\right\vert }\left\vert \eta \right\vert \left( \left\vert \eta
\right\vert -1\right) \sum_{k=0}^{\left\vert \eta \right\vert
-2}\frac{\left( \left\vert \eta
\right\vert -2\right) !}{k!\left( \left\vert \eta \right\vert -k-2\right) !}%
\left( 1-\delta \right)^{\left\vert \eta \right\vert -2-k}\delta
^{k}\notag\\=\,  &\Vert k\Vert_{\K_\aC}\delta \alpha^{\left\vert \eta
\right\vert }\left\vert \eta \right\vert \left( \left\vert \eta
\right\vert -1\right).\label{eq3}
\end{align}
Combining inequalities \eqref{eq1}--\eqref{eq3} we obtain
\eqref{apprrenest}.
\end{proof}

As a result, we obtain an approximation for the semigroup.

\begin{theorem}
Let $\alpha_0$ be chosen as in the proof of the Proposition~\ref%
{invariantspace} and be fixed. Let $\alpha\in(\alpha_0;1)$ and
$k\in\overline{{\mathcal{K} }_{\a C}}$ be given. Then
\begin{equation*}
({\widehat{P}} ^\ast_{\delta})^{[{t}/{\delta}]}k\rightarrow{\widehat{T}%
} ^{\odot\a}(t)k, \quad \delta\rightarrow 0
\end{equation*}
in the space $\overline{{\mathcal{K}}_{\a C}}$ with norm
$\|\cdot\|_{{\mathcal{K}}_C}$ for all $t\geq 0$ uniformly on bounded
intervals.
\end{theorem}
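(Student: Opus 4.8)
The plan is to read the assertion as a Trotter--Kato type approximation result and to deduce it from Lemma~\ref{EK_res-conv}. I would take $L=L_n:=\overline{\K_{\alpha C}}$ with the norm $\|\cdot\|_{\K_C}$ and $p_n:=\1$ for every $n$ (so $\sup_n\|p_n\|=1$); for an arbitrary sequence $\delta_n\downarrow 0$ I set $T_n:=\widehat{P}^\ast_{\delta_n}$ and $\varepsilon_n:=\delta_n$, whence $A_n=\delta_n^{-1}(\widehat{P}^\ast_{\delta_n}-\1)$. The limiting object is the strongly continuous contraction semigroup $T(t):=\widehat{T}^{\odot\a}(t)$ on $\overline{\K_{\alpha C}}$, with generator $A:=\widehat{L}^{\odot\a}$ and core $D:=D_{\alpha C}$ (both supplied by the discussion following Proposition~\ref{invariantspace}).

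The first thing to check is that each $T_n$ is a $\|\cdot\|_{\K_C}$-contraction of $L$. Under \eqref{verysmallparam} the bound \eqref{smallparam} holds, so Proposition~\ref{prop_contr} makes $\widehat{P}_\delta$ a contraction on $\L_C$; dualizing through the isometry \eqref{isometry} shows $\widehat{P}^\ast_\delta$ is a $\|\cdot\|_{\K_C}$-contraction on $\K_C$. Because $\alpha>\alpha_0$ gives $z\exp\{\alpha C C_\phi\}\leq\alpha C$ (as recorded in the proof of Proposition~\ref{imp_prop}), Proposition~\ref{prop_contr} applied with $\alpha C$ in place of $C$ shows $\widehat{P}_\delta$ is also a contraction on $\L_{\alpha C}$, hence $\widehat{P}^\ast_\delta$ maps $\K_{\alpha C}$ into itself. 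Approximating $k\in\overline{\K_{\alpha C}}$ in $\|\cdot\|_{\K_C}$ by elements of $\K_{\alpha C}$ and using the $\K_C$-continuity of $\widehat{P}^\ast_\delta$, I conclude that $\widehat{P}^\ast_\delta$ leaves the closed subspace $\overline{\K_{\alpha C}}$ invariant and restricts there to a contraction, as the lemma requires.

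Next I would verify condition (2) of Lemma~\ref{EK_res-conv}. For $k\in D_{\alpha C}$ take the constant approximants $k_n:=k=p_n k$; then $k_n\to p_n k$ is immediate, while $A_n k_n=\delta_n^{-1}(\widehat{P}^\ast_{\delta_n}-\1)k$ converges to $\widehat{L}^{\odot\a}k=p_n A k$ in $\|\cdot\|_{\K_C}$ precisely by Proposition~\ref{imp_prop}, whose hypotheses \eqref{verysmallparam} and \eqref{new_z} are exactly those assumed here. Since $D_{\alpha C}$ is a core for $\widehat{L}^{\odot\a}$, all hypotheses of the lemma hold, and the implication (2)$\Rightarrow$(1) yields, for every $k\in\overline{\K_{\alpha C}}$,
\[
(\widehat{P}^\ast_{\delta_n})^{[t/\delta_n]}k\longrightarrow\widehat{T}^{\odot\a}(t)k\quad\text{in }\|\cdot\|_{\K_C},
\]
uniformly for $t$ in bounded intervals.

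Finally I would upgrade this sequential statement to the continuous limit $\delta\to0$ claimed in the theorem by a standard subsequence argument: if the continuous convergence failed for some $k$, there would be $\eps_0>0$, a bounded interval, and a sequence $\delta_n\downarrow 0$ for which $\sup_t\|(\widehat{P}^\ast_{\delta_n})^{[t/\delta_n]}k-\widehat{T}^{\odot\a}(t)k\|_{\K_C}\geq\eps_0$, contradicting the uniform sequential convergence just obtained for that same sequence. I expect the only real work to lie in the contraction/invariance bookkeeping of the second step --- confirming that $\widehat{P}^\ast_\delta$ preserves $\overline{\K_{\alpha C}}$ and contracts it in the $\K_C$-norm, which hinges on the two-scale contractivity on $\L_C$ and $\L_{\alpha C}$ guaranteed by $\alpha>\alpha_0$; the decisive convergence estimate is already isolated in Proposition~\ref{imp_prop}.
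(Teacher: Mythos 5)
Your proposal is correct and follows exactly the paper's route: the paper's proof is a one-line application of Lemma~\ref{EK_res-conv} with $L_n=L=\overline{\K_{\alpha C}}$, $p_n=\1$, $f_n=f=k$, $\varepsilon_n=\delta\to0$, with Proposition~\ref{imp_prop} supplying the generator convergence on the core $D_{\alpha C}$. You merely spell out the contraction and invariance bookkeeping for $\widehat{P}^\ast_\delta$ on $\overline{\K_{\alpha C}}$ and the sequential-to-continuous upgrade, which the paper leaves implicit.
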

\begin{proof}
We may apply Proposition~\ref{imp_prop} to use
Lemma~\ref{EK_res-conv} in the case $L_n=L=\overline{\L_\aC}$,
$p_n=\1$, $f_n=f=k$, $\eps_n=\delta\rightarrow0$, $n\in\N$.
\end{proof}

\subsection{Positive definiteness}

We consider a small modification of the notion of positive definite functions considered in Proposition~\ref{exuniqmeas}. Namely, we denote by
$L_{\mathrm{ls}}^0(\Ga _0)$ the set of all measurable functions
on $\Ga_0$ which have a local support, i.e. $G\in
L_{\mathrm{ls}}^0(\Ga _0)$ if there exists $\La \in \B_b({\R}^{d})$
such that $G\upharpoonright_{\Ga _0\setminus \Ga (\La) }=0$. We will say that a measurable function $k:\Gamma_0\rightarrow{\mathbb{R}}$
is a positive defined function if, for any $G\in
L_{\mathrm{ls}}^0(\Ga _0)$ such that $KG\geq 0$ and $G\in\mathcal{L}_{C}$ for some $C>1$ the
inequality \eqref{Lenpos} holds.

For a given $C>1$, we set $\mathcal{L}_{C}^{\mathrm{ls}}= L_{\mathrm{ls}}^0(\Ga _0) \cap \mathcal{L}_{C}$. Since $\Bbs\subset\mathcal{L}_{C}^{\mathrm{ls}}$, for any $C>1$, Proposition~\ref{exuniqmeas} (see also the second part of Remark~\ref{remLen}) implies that if $k$ is a positive definite function as above then there exists a unique measure $\mu\in{%
\mathcal{M}}^1_{\mathrm{fm}}(\Gamma)$ such that $k=k_\mu$ be its
correlation function in the sense of \eqref{eqmeans}. Our aim is to show that the evolution
$k\mapsto \widehat{T}^{\odot }(t)k$ preserves this property of the positive definiteness.

\begin{theorem}
Let \eqref{verysmallparam} holds and $k\in \overline{D(\widehat{L}^{\ast })}%
\subset \mathcal{K}_{C}$ be a positive definite function. Then $k_{t}:=\widehat{T}^{\odot }(t)k\in \overline{D(\widehat{L}^{\ast })}%
\subset \mathcal{K}_{C}$ will be a positive definite function for any $t\geq0$.
\end{theorem}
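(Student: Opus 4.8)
The plan is to exploit the identification, recalled in the paragraph preceding the theorem, of positive definite functions with correlation functions of measures in $\mathcal{M}^1_{\mathrm{fm}}(\Gamma)$, together with the fact that the approximating maps $P_\delta^\Lambda$ are genuine Markov transition operators. Writing $k=k_\mu$ for the measure $\mu\in\mathcal{M}^1_{\mathrm{fm}}(\Gamma)$ furnished by Proposition~\ref{exuniqmeas}, I must show that $\langle\!\langle G,k_t\rangle\!\rangle\ge0$ for every admissible test function $G\in L_{\mathrm{ls}}^0(\Gamma_0)\cap\mathcal{L}_C$ with $KG\ge0$. Since $k\in\overline{D(\widehat{L}^\ast)}$ gives $k_t=\widehat{T}^\odot(t)k=\widehat{T}^\ast(t)k$, the adjoint relation rewrites this as $\langle\!\langle G,k_t\rangle\!\rangle=\langle\!\langle \widehat{T}_tG,\,k\rangle\!\rangle$, so the whole question is reduced to the sign of this last pairing for a single fixed $G$.

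The key step is to record that $P_\delta:=K\widehat{P}_\delta K^{-1}$ is positivity preserving. Each finite-volume map $P_\delta^\Lambda$ in \eqref{apprGa0} is positivity preserving and satisfies $P_\delta^\Lambda 1=1$, i.e.\ it is the transition operator of a Markov chain; its infinite-volume limit $P_\delta=\lim_{\Lambda\uparrow\X}P_\delta^\Lambda$, which is compatible with $\widehat{P}_\delta=\lim_\Lambda\widehat{P}_\delta^\Lambda$ and the finite-volume intertwining $\widehat{P}_\delta^\Lambda=K_0^{-1}P_\delta^\Lambda K_0$, is therefore again positivity preserving. Consequently $K\widehat{P}_\delta=P_\delta K$ and, by iteration, $K\bigl((\widehat{P}_\delta)^nG\bigr)=P_\delta^n(KG)\ge0$ for every $n$ whenever $KG\ge0$.

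It then remains to assemble these facts and pass to the limit. For the fixed admissible $G$ and each $\delta$, the iterate $(\widehat{P}_\delta)^{[t/\delta]}G$ lies in $\mathcal{L}_C$ because $\widehat{P}_\delta$ is a contraction there (Proposition~\ref{prop_contr}, using that \eqref{verysmallparam} implies \eqref{smallparam}); hence the extended form of the identity \eqref{eqmeans} applies and gives $\langle\!\langle (\widehat{P}_\delta)^{[t/\delta]}G,\,k\rangle\!\rangle=\int_\Gamma K\bigl((\widehat{P}_\delta)^{[t/\delta]}G\bigr)\,d\mu\ge0$, the nonnegativity coming from the previous step. By Corollary~\ref{approx0} one has $(\widehat{P}_\delta)^{[t/\delta]}G\to\widehat{T}_tG$ in $\mathcal{L}_C$ as $\delta\to0$, and pairing against the fixed $k\in\mathcal{K}_C$ is continuous, so the left-hand side converges to $\langle\!\langle \widehat{T}_tG,\,k\rangle\!\rangle=\langle\!\langle G,k_t\rangle\!\rangle$. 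Passing to the limit yields $\langle\!\langle G,k_t\rangle\!\rangle\ge0$, which is exactly the asserted positive definiteness of $k_t$.

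The main obstacle will be the rigorous justification of the middle step: that $K\widehat{P}_\delta K^{-1}$ is honestly positivity preserving and that the intertwining $K\widehat{P}_\delta=P_\delta K$ survives the limit $\Lambda\uparrow\X$. This requires dominated-convergence control of both the $K$-summation in \eqref{K-transform} and the $\omega$-integration in \eqref{apprsemigroupLa}, supplied by the bounds used in Proposition~\ref{prop_contr}; one must also verify the absolute integrability making \eqref{eqmeans} legitimate for $H=(\widehat{P}_\delta)^{[t/\delta]}G$, which follows from $(\widehat{P}_\delta)^{[t/\delta]}|G|\in\mathcal{L}_C$, from $k_\mu\ge0$, and from the pairing bound $|\langle\!\langle\cdot,\cdot\rangle\!\rangle|\le\|\cdot\|_C\|\cdot\|_{\mathcal{K}_C}$. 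A convenient feature of keeping the local-support test function $G$ fixed and transporting all operators onto it through duality is that one never needs $\widehat{P}_\delta G$ itself to have local support, so the restrictive test class in the definition of positive definiteness causes no difficulty.
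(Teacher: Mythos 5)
Your overall strategy---reduce to $\langle\!\langle \widehat{T}_tG,\,k\rangle\!\rangle\geq0$ by duality, extract nonnegativity from the Markov-chain structure of the approximants, and pass to the limit using the contraction property and the bound $|\langle\!\langle G,k\rangle\!\rangle|\leq\|G\|_C\|k\|_{\K_C}$---is indeed the paper's strategy, and your first and last steps are sound. The gap sits exactly at the step you yourself flag as ``the main obstacle'', and it is not a dominated-convergence technicality: the infinite-volume operator $P_\delta=K\widehat{P}_\delta K^{-1}$ that is supposed to carry the positivity is never defined, and cannot be defined as $\lim_{\Lambda\uparrow\X}P_\delta^\Lambda$ pointwise on $\Gamma$. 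The representation \eqref{apprGa0}, which is the only place where positivity preservation is manifest (nonnegative transition kernel, $P_\delta^\Lambda 1=1$), is stated and makes sense only for \emph{finite} configurations $\gamma\in\Gamma_0$: for infinite $\gamma$ the weights $\delta^{|\eta|}(1-\delta)^{|\gamma\setminus\eta|}$ degenerate and the sum runs over uncountably many subconfigurations. The rearranged form \eqref{anotherform} is the one that survives on infinite configurations, but it is written in terms of $G=K_0^{-1}F$ and is not manifestly nonnegative; converting it back into a positive-kernel expression in $F=KG$ is precisely the resummation that fails off $\Gamma_0$. So the assertion $K(\widehat{P}_\delta^{\,n}G)\geq0$ $\mu$-a.e.\ is essentially the whole content of the theorem at the level of a single time step, and your argument assumes it rather than proves it.

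The paper repairs this by staying in finite volume until the very end: it replaces $G$ by $G_n=\1_{\Gamma(\Lambda_n)}G$ and $\widehat{P}_\delta$ by $T_n=\widehat{P}_{1/n}^{\Lambda_n}$, observes the identity $\bigl(K\1_{\Gamma(\Lambda_n)}T_n^mG_n\bigr)(\gamma)=\bigl(KT_n^mG_n\bigr)(\gamma_{\Lambda_n})$ of \eqref{dop23}, so that positivity only ever needs to be checked at the finite configurations $\gamma_{\Lambda_n}\in\Gamma(\Lambda_n)$, where the intertwining $K\widehat{P}_{1/n}^{\Lambda_n}=P_{1/n}^{\Lambda_n}K$ and the explicit nonnegative kernel yield the induction step \eqref{dop345}; it then uses the finite-volume convergence of Proposition~\ref{approxn} (rather than Corollary~\ref{approx0}) together with $|k|\leq\|k\|_{\K_C}C^{|\cdot|}$ to control the error $\langle\!\langle \widehat{T}(t)G-T_n^{[nt]}G_n,\,k\rangle\!\rangle$ over $\Gamma(\Lambda_n)$. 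If you reorganize your middle step along these lines---cutting off the test function and checking positivity only on $\Gamma(\Lambda_n)$---your argument goes through; as written, it does not.
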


\begin{proof}
Let $C>0$ be arbitrary and fixed.
For any $G\in \mathcal{L}_{C}^{\mathrm{ls}}$ we have
\begin{equation}\label{sdual}
\int_{\Gamma _{0}}G\left( \eta \right) k_{t}\left( \eta \right)
d\lambda \left( \eta \right) =\int_{\Gamma _{0}}(\widehat{T}(t)G)\left(
\eta \right) k\left( \eta \right) d\lambda \left( \eta \right) .
\end{equation}
By Proposition~\ref{approxn}, under condition
\eqref{verysmallparam}, we obtain that
\[
\lim_{n\rightarrow 0}\int_{\Gamma (\Lambda _{n})}\left\vert T_{n}^{\left[ nt%
\right] }\1_{\Gamma (\Lambda _{n})}G\left( \eta \right) -\1_{\Gamma
(\Lambda _{n})}(\eta)(\widehat{T}(t)G)\left( \eta \right) \right\vert
C^{\left\vert \eta \right\vert }d\lambda \left( \eta \right) =0,
\]%
where for $n\geq 2$%
\[
T_{n}=\widehat{P}_{\frac{1}{n}}^{\Lambda _{n}}
\]%
and $\La_n \nearrow\X.$ Note that, by the dominated convergence
theorem,
\begin{align*}
\int_{\Gamma _{0}}(\widehat{T}(t)G)\left( \eta \right) k\left( \eta
\right) d\lambda \left( \eta \right)  =&\lim_{n\rightarrow \infty
}\int_{\Gamma _{0}}\1_{\Gamma (\Lambda _{n})}\left( \eta \right)
(\widehat{T}(t)G)\left( \eta
\right) k\left( \eta \right) d\lambda \left( \eta \right)  \\
=&\lim_{n\rightarrow \infty }\int_{\Gamma (\Lambda _{n})}(\widehat{T}(t)G)\left( \eta \right) k\left( \eta \right) d\lambda \left( \eta
\right) .
\end{align*}%
Next,%
\begin{align*}
&\left\vert \int_{\Gamma (\Lambda _{n})}(\widehat{T}(t)G)\left( \eta
\right) k\left( \eta \right) d\lambda \left( \eta \right)
-\int_{\Gamma (\Lambda _{n})}T_{n}^{\left[ nt\right] }\1_{\Gamma
(\Lambda _{n})}G\left( \eta
\right) k\left( \eta \right) d\lambda \left( \eta \right) \right\vert  \\
\leq &\int_{\Gamma (\Lambda _{n})}\left\vert T_{n}^{\left[
nt\right]
}\1_{\Gamma (\Lambda _{n})}G\left( \eta \right) -\1_{\Gamma (\Lambda _{n})}(\eta)(%
\widehat{T}(t)G)\left( \eta \right) \right\vert k\left( \eta \right)
d\lambda
\left( \eta \right)  \\
\leq  & \, \|k\|_{\K_C}\int_{\Gamma (\Lambda _{n})}\left\vert T_{n}^{\left[
nt\right]
}\1_{\Gamma (\Lambda _{n})}G\left( \eta \right) -\1_{\Gamma (\Lambda _{n})}(\eta)(%
\widehat{T}(t)G)\left( \eta \right) \right\vert C^{\left\vert \eta
\right\vert }d\lambda \left( \eta \right) \rightarrow
0,~~n\rightarrow \infty .
\end{align*}%
Therefore,%
\begin{equation}\label{eq-dop}
\int_{\Gamma _{0}}(\widehat{T}(t)G)\left( \eta \right) k\left( \eta
\right) d\lambda \left( \eta \right) =\lim_{n\rightarrow \infty
}\int_{\Gamma (\Lambda _{n})}T_{n}^{\left[ nt\right] }\1_{\Gamma
(\Lambda _{n})}G\left( \eta \right) k\left( \eta \right) d\lambda
\left( \eta \right) .
\end{equation}
Our aim is to show that for any $G\in \mathcal{L}_{C}^{\mathrm{ls}}$ the inequality $KG\geq0$ implies
\[
\int_{\Gamma _{0}}G\left( \eta \right) k_{t}\left( \eta \right)
d\lambda \left( \eta \right) \geq 0.
\]
By \eqref{sdual} and \eqref{eq-dop}, it is enough to show that for any $m\in
\mathbb{N}$ and for any $G\in \mathcal{L}_{C}^{\mathrm{ls}}$ such that $KG\geq0$
the following inequality holds
\begin{equation}\label{eq-dop2}
\int_{\Gamma _{0}}\1_{\Gamma (\Lambda _{n})}T_{n}^{m}\1_{\Gamma
(\Lambda _{n})}G\left( \eta \right) k\left( \eta \right) d\lambda
\left( \eta \right) \geq 0, \quad m\in\N_0.
\end{equation}
The inequality \eqref{eq-dop2} is fulfilled  if only
\begin{equation}\label{dop12}
K\1_{\Gamma (\Lambda _{n})}T_{n}^{m}G_{n}\geq 0,
\end{equation}
where $G_{n}:=\1_{\Gamma (\Lambda _{n})}G$.
Note that
\begin{align}
\bigl( K\1_{\Gamma (\Lambda _{n})}T_{n}^{m}G_{n}\bigr) \left(
\gamma \right)  =&\sum_{\eta \Subset \gamma }\1_{\Gamma (\Lambda _{n})}\left( \eta
\right) \left( T_{n}^{m}G_{n}\right) \left( \eta \right)  \label{dop23}\\
=&\sum_{\eta \subset \gamma _{\Lambda _{n}}}\left(
T_{n}^{m}G_{n}\right) \left( \eta \right) =\left(
KT_{n}^{m}G_{n}\right) \left( \gamma _{\Lambda _{n}}\right) \notag
\end{align}
for any $m\in\N_0$. In particular,
\begin{equation}\label{dop123}
\left( KG_{n}\right) \left( \gamma \right) =\left( K\1_{\Gamma
(\Lambda _{n})}G\right) \left( \gamma \right) =\left( KG\right)
\left( \gamma _{\Lambda _{n}}\right) \geq 0.
\end{equation}

Let us now consider any $\tilde{G}\in \mathcal{L}_{C}^{\mathrm{ls}}$ (stress that $\tilde{G}$ is
not necessary equal to $0$ outside of $\Ga(\Lambda _{n})$) and
suppose that $\bigl( K \tilde{G}\bigr) \left( \gamma \right) \geq 0$
for any $\gamma \in \Gamma (\Lambda _{n})$. Then
\begin{align}\label{dop345}
&\bigl( KT_{n}\tilde{G}\bigr) \left( \gamma _{\Lambda _{n}}\right) = \bigl( K\widehat{P}_{\frac{1}{n}}^{\Lambda _{n}}\tilde{G}\bigr)
\left(
\gamma _{\Lambda _{n}}\right) =\bigl( P_{\frac{1}{n}}^{\Lambda _{n}}K\tilde{G}\bigr) \left(
\gamma
_{\Lambda _{n}}\right)  \\
=& \Bigl( \Xi _{\frac{1}{n}}^{\Lambda _{n}}\left( \gamma _{\Lambda
_{n}}\right) \Bigr) ^{-1}\sum_{\eta \subset \gamma _{\Lambda
_{n}}}\biggl(
\frac{1}{n}\biggr) ^{\left\vert \eta \right\vert }\biggl( 1-\frac{1}{n}%
\biggr) ^{\left\vert \gamma \setminus \eta \right\vert } \notag\\
&\times \int_{\Gamma (\Lambda _{n})}\biggl( \frac{z}{n}\biggr)
^{\left\vert \omega \right\vert }\prod_{y\in \omega }e^{-E^{\phi
}\left( y,\gamma \right) }\bigl( K\tilde{G}\bigr) \bigl( \left(
\gamma _{\Lambda _{n}}\setminus \eta
\right) \cup \omega \bigr) d\lambda \left( \omega \right)
\geq 0.\notag
\end{align}
By \eqref{dop123}, setting $\tilde{G}=G_n\in\mathcal{L}_{C}^{\mathrm{ls}}$ we obtain, because of \eqref{dop345}, $KT_nG_n \geq0$.
Next, setting $\tilde{G}=T_nG_n\in\mathcal{L}_{C}^{\mathrm{ls}}$ we obtain, by \eqref{dop345}, $KT_n^2 G_n \geq0$.
Then, using an induction mechanism, we obtain that%
\[
\left( KT_{n}^{m}G_{n}\right) \left( \gamma _{\Lambda _{n}}\right)
\geq 0, \quad m\in\N_0,
\]
that, by \eqref{dop12} and \eqref{dop23}, yields \eqref{eq-dop2}.
This completes the proof. \end{proof}

\subsection{Ergodicity}
Let $k\in\overline{{\mathcal{K}}_{\a C}}$ be such that $k(\emptyset)=0$
then, by \eqref{apprfordual}, $(\widehat{P}_{\delta }^{\ast }k)\left( \emptyset \right)=0$. Class of all such functions we denote by
$\K_\a^0$.
\begin{proposition}\label{propergod}
Assume that there exists $\nu\in(0;1)$ such that
\begin{equation}  \label{nu-verysmallparam}
z\leq \min\Bigl\{\nu Ce^{-CC_{\phi }} ; \, 2Ce^{-2CC_{\phi }}\Bigr\}.
\end{equation}
Let, additionally, $\a\in(\a_0;1)$,
where $\alpha_0$ is chosen as in the proof of the Proposition~\ref{invariantspace}.
Then for any $\delta \in(0;1)$ the following estimate holds
\begin{equation}\label{supercontraction}
\Bigl\| \widehat{P}_{\delta }^{\ast }\! \upharpoonright_{\K_\a^{0}} \Bigr\|\leq 1-(1-\nu)\delta.
\end{equation}
\end{proposition}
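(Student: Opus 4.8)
The plan is to read $\widehat{P}_{\delta}^{\ast}$ off its explicit kernel \eqref{apprfordual} and estimate it pointwise on $\Ga_0$, using the defining constraint $k(\emptyset)=0$ of $\K_\a^0$ to produce a contraction factor strictly below $1$. No new machinery is needed: the bookkeeping is exactly that of the proofs of Proposition~\ref{prop_contr} and Proposition~\ref{imp_prop}, only now run for the full sum over $\omega\subset\eta$ rather than just the $|\omega|\geq2$ part.

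First, recall from the discussion preceding the proposition that $(\widehat{P}_{\delta}^{\ast}k)(\emptyset)=0$ whenever $k(\emptyset)=0$; hence $\K_\a^0$ is $\widehat{P}_{\delta}^{\ast}$-invariant and, when computing $\|\widehat{P}_{\delta}^{\ast}k\|_{\K_C}=\esssup_{\eta\in\Ga_0}C^{-|\eta|}|(\widehat{P}_{\delta}^{\ast}k)(\eta)|$, only the values at $\eta$ with $|\eta|\geq1$ are relevant. Fix such an $\eta$. In \eqref{apprfordual} I would first use $\phi\geq0$ to discard the factor $\prod_{y\in\eta\setminus\omega}e^{-E^\phi(y,\omega)}\leq1$, and then insert the Ruelle-type bound \eqref{ineqKC}, namely $|k(\xi\cup(\eta\setminus\omega))|\leq\|k\|_{\K_C}\,C^{|\xi|+|\eta|-|\omega|}$. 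The inner $\xi$-integral then becomes a coherent-state integral which, by \eqref{intexp}, equals $\exp\bigl\{C\int_{\X}(1-e^{-E^\phi(y,\omega)})\,dy\bigr\}$; applying \eqref{keypos} and the definition of $C_{\phi}$ bounds it by $\exp\{CC_{\phi}|\omega|\}$.

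Collecting these factors, the summand indexed by $\omega\subset\eta$ is at most $\|k\|_{\K_C}\,(1-\delta)^{|\eta\setminus\omega|}\bigl(z\delta C^{-1}e^{CC_{\phi}}\bigr)^{|\omega|}C^{|\eta|}$. Dividing by $C^{|\eta|}$ and summing over all subsets via the elementary identity $\sum_{\omega\subset\eta}a^{|\eta\setminus\omega|}b^{|\omega|}=(a+b)^{|\eta|}$ gives
\begin{equation*}
C^{-|\eta|}\bigl|(\widehat{P}_{\delta}^{\ast}k)(\eta)\bigr|
\leq \|k\|_{\K_C}\Bigl(1-\delta+\frac{z\delta}{C}e^{CC_{\phi}}\Bigr)^{|\eta|}.
\end{equation*}
The first alternative in hypothesis \eqref{nu-verysmallparam} reads $zC^{-1}e^{CC_{\phi}}\leq\nu$, so the base is bounded by $1-\delta+\nu\delta=1-(1-\nu)\delta$.

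The decisive step is the last one, and it is exactly where $\K_\a^0$ enters: since $\nu\in(0;1)$ the base $1-(1-\nu)\delta$ lies in $(0;1)$, and because $k(\emptyset)=0$ we only need $|\eta|\geq1$, whence $(1-(1-\nu)\delta)^{|\eta|}\leq1-(1-\nu)\delta$. Taking the essential supremum over $\eta$ then yields \eqref{supercontraction}. I expect the only point requiring genuine care to be the accurate evaluation of the $\xi$-integral through \eqref{intexp} and \eqref{keypos} — precisely the computation already validated in Proposition~\ref{prop_contr} — while the true content of the statement is the simple but essential observation that on $\K_\a^0$ the constant mode $\eta=\emptyset$, which alone would force the factor $1$, has been removed.
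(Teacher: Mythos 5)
Your proposal is correct. The pointwise kernel estimate is exactly the paper's: discard $\prod_{y\in\eta\setminus\omega}e^{-E^\phi(y,\omega)}\le 1$, insert $|k|\le\|k\|_{\K_C}C^{|\cdot|}$, evaluate the $\xi$-integral via \eqref{intexp} and bound it by $e^{CC_\phi|\omega|}$ via \eqref{keypos}, and close the binomial sum to get $\bigl(1-\delta+\frac{z\delta}{C}e^{CC_\phi}\bigr)^{|\eta|}\le\bigl(1-(1-\nu)\delta\bigr)^{|\eta|}$. Where you genuinely diverge from the paper is in how $k(\emptyset)=0$ enters. The paper carries the indicator $\1_{|\xi|+|\eta\setminus\omega|>0}$ inside the estimate, isolates the single vanishing contribution (the $\omega=\eta$, $\xi=\emptyset$ term), subtracts $\bigl(\frac{z\delta}{C}\bigr)^{|\eta|}$ from the closed binomial sum, and then factors the difference of $|\eta|$-th powers and runs a geometric-series bound to arrive at $1-\frac{(1-\nu)\delta}{1-z\delta/C}\le 1-(1-\nu)\delta$. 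You instead use $k(\emptyset)=0$ only through the (already recorded) fact that $(\widehat{P}_\delta^\ast k)(\emptyset)=0$, so the essential supremum defining $\|\widehat{P}_\delta^\ast k\|_{\K_C}$ is taken over $|\eta|\ge 1$, where $\bigl(1-(1-\nu)\delta\bigr)^{|\eta|}\le 1-(1-\nu)\delta$ since the base lies in $(0;1)$. Your endgame is shorter and avoids the telescoping entirely; the paper's version buys the marginally sharper intermediate constant $1-(1-\nu)\delta\bigl(1-\frac{z\delta}{C}\bigr)^{-1}$, but both yield the stated estimate \eqref{supercontraction}.
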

\begin{proof}
It is easily seen that for any $k\in\K_\a^0$ the following
inequality holds
\[
\left\vert k\left( \eta \right) \right\vert \leq \\1_{\left\vert
\eta \right\vert >0}\left\Vert k\right\Vert
_{\mathcal{K}_{C}}C^{\left\vert \eta \right\vert },
\quad\lambda\mathrm{-a.a.}\;\;\eta\in\Ga_0.
\]%
Then, using \eqref{apprfordual}, we have
\begin{align*}
&C^{-\left\vert \eta \right\vert }\left\vert (\widehat{P}_{\delta
}^{\ast
}k)\left( \eta \right) \right\vert  \\
\leq &C^{-\left\vert \eta \right\vert }\sum_{\omega \subset \eta
}\left( 1-\delta \right) ^{\left\vert \eta \setminus \omega
\right\vert }\left( z\delta \right) ^{\left\vert \omega \right\vert
}\int_{\Gamma _{0}}\prod\limits_{y\in \xi }\left( 1-e^{-E^{\phi
}\left( y,\omega \right) }\right) \left\vert k\left( \xi \cup \eta
\setminus \omega \right)
\right\vert d\lambda \left( \xi \right)  \\
\leq &\left\Vert k\right\Vert _{\mathcal{K}_{C}}\sum_{\omega \subset
\eta }\left( 1-\delta \right) ^{\left\vert \eta \setminus \omega
\right\vert }\left( \frac{z\delta }{C}\right) ^{\left\vert \omega
\right\vert }\int_{\Gamma _{0}}\prod\limits_{y\in \xi }\left(
1-e^{-E^{\phi }\left( y,\omega \right) }\right) C^{\left\vert \xi
\right\vert }\1_{\left\vert \xi \right\vert +\left\vert \eta
\setminus \omega \right\vert >0}d\lambda \left(
\xi \right)  \\
=&\left\Vert k\right\Vert _{\mathcal{K}_{C}}\sum_{\omega \subsetneq
\eta }\left( 1-\delta \right) ^{\left\vert \eta \setminus \omega
\right\vert }\left( \frac{z\delta }{C}\right) ^{\left\vert \omega
\right\vert }\int_{\Gamma _{0}}\prod\limits_{y\in \xi }\left(
1-e^{-E^{\phi }\left( y,\omega \right) }\right) C^{\left\vert \xi
\right\vert }d\lambda \left( \xi
\right)  \\
&+\left\Vert k\right\Vert _{\mathcal{K}_{C}}\left( \frac{z\delta }{C}%
\right) ^{\left\vert \eta \right\vert }\int_{\Gamma
_{0}}\prod\limits_{y\in \xi }\left( 1-e^{-E^{\phi }\left( y,\omega
\right) }\right) C^{\left\vert \xi \right\vert }\1_{\left\vert \xi
\right\vert >0}d\lambda \left( \xi
\right)  \\
=&\left\Vert k\right\Vert _{\mathcal{K}_{C}}\sum_{\omega \subsetneq
\eta }\left( 1-\delta \right) ^{\left\vert \eta \setminus \omega
\right\vert }\left( \frac{z\delta }{C}\right) ^{\left\vert \omega
\right\vert }\int_{\Gamma _{0}}\prod\limits_{y\in \xi }\left(
1-e^{-E^{\phi }\left( y,\omega \right) }\right) C^{\left\vert \xi
\right\vert }d\lambda \left( \xi
\right)  \\
&+\left\Vert k\right\Vert _{\mathcal{K}_{C}}\left( \frac{z\delta }{C}%
\right) ^{\left\vert \eta \right\vert }\int_{\Gamma
_{0}}\prod\limits_{y\in \xi }\left( 1-e^{-E^{\phi }\left( y,\omega
\right) }\right) C^{\left\vert
\xi \right\vert }d\lambda \left( \xi \right) -\left\Vert k\right\Vert _{%
\mathcal{K}_{C}}\left( \frac{z\delta }{C}\right) ^{\left\vert \eta
\right\vert } \\
=&\left\Vert k\right\Vert _{\mathcal{K}_{C}}\sum_{\omega \subset
\eta }\left( 1-\delta \right) ^{\left\vert \eta \setminus \omega
\right\vert }\left( \frac{z\delta }{C}\right) ^{\left\vert \omega
\right\vert }\int_{\Gamma _{0}}\prod\limits_{y\in \xi }\left(
1-e^{-E^{\phi }\left( y,\omega \right) }\right) C^{\left\vert \xi
\right\vert }d\lambda \left( \xi
\right) \\&-\left\Vert k\right\Vert _{\mathcal{K}_{C}}\left( \frac{z\delta }{C}%
\right) ^{\left\vert \eta \right\vert } \\
=&\left\Vert k\right\Vert _{\mathcal{K}_{C}}\sum_{\omega \subset
\eta }\left( 1-\delta \right) ^{\left\vert \eta \setminus \omega
\right\vert }\left( \frac{z\delta }{C}\right) ^{\left\vert \omega
\right\vert }\exp \left\{ C\int_{\mathbb{R}^{d}}\left( 1-e^{-E^{\phi
}\left( y,\omega \right)
}\right) dy\right\} \\&-\left\Vert k\right\Vert _{\mathcal{K}_{C}}\left( \frac{%
z\delta }{C}\right) ^{\left\vert \eta \right\vert } \\
\leq &\left\Vert k\right\Vert _{\mathcal{K}_{C}}\sum_{\omega
\subset \eta }\left( 1-\delta \right) ^{\left\vert \eta \setminus
\omega \right\vert }\left( \frac{z\delta }{C}\right) ^{\left\vert
\omega \right\vert }\exp \left\{ CC_{\beta }\left\vert \omega
\right\vert \right\} -\left\Vert k\right\Vert
_{\mathcal{K}_{C}}\left( \frac{z\delta }{C}\right) ^{\left\vert
\eta \right\vert } \\
\leq &\left\Vert k\right\Vert _{\mathcal{K}_{C}}\sum_{\omega
\subset \eta }\left( 1-\delta \right) ^{\left\vert \eta \setminus
\omega \right\vert }\left( \nu  \delta \right) ^{\left\vert \omega
\right\vert }-\left\Vert k\right\Vert _{\mathcal{K}_{C}}\left(
\frac{z\delta }{C}\right) ^{\left\vert
\eta \right\vert } \\
=&\left\Vert k\right\Vert _{\mathcal{K}_{C}}\left( \left( 1-\left(
1-\nu  \right) \delta \right) ^{\left\vert \eta \right\vert }-\left(
\frac{z\delta
}{C}\right) ^{\left\vert \eta \right\vert }\right)  \\
=&\left\Vert k\right\Vert _{\mathcal{K}_{C}}\left( 1-\left( 1-\nu
\right) \delta -\frac{z\delta }{C}\right) \sum_{j=0}^{\left\vert
\eta \right\vert -1}\left( 1-\left( 1-\nu  \right) \delta \right)
^{\left\vert
\eta \right\vert -1-\left\vert j\right\vert }\left( \frac{z\delta }{C}%
\right) ^{j} \\
\leq &\left\Vert k\right\Vert _{\mathcal{K}_{C}}\left( 1-\left(
1-\nu  \right) \delta -\frac{z\delta }{C}\right)
\sum_{j=0}^{\left\vert \eta
\right\vert -1}\left( \frac{z\delta }{C}\right) ^{j} \\
=&\left\Vert k\right\Vert _{\mathcal{K}_{C}}\left( 1-\left( 1-\nu
\right) \delta -\frac{z\delta }{C}\right) \frac{1-\left( \frac{z\delta }{C}%
\right) ^{\left\vert \eta \right\vert }}{1-\frac{z\delta }{C}} \\
\leq &\left\Vert k\right\Vert _{\mathcal{K}_{C}}\left( 1-\left(
1-\nu
\right) \delta -\frac{z\delta }{C}\right) \frac{1}{1-\frac{z\delta }{C}} \\
= &\left\Vert k\right\Vert _{\mathcal{K}_{C}}\left(
1-\frac{\left( 1-\nu  \right) \delta }{1-\frac{z\delta }{C}}\right)
\leq\left\Vert k\right\Vert _{\mathcal{K}_{C}}\bigl( 1-\left( 1-\nu
\right) \delta \bigr),
\end{align*}
where we have used that, clearly, $z<\nu C<C$. The statement is
proved.
\end{proof}

\begin{remark}
Condition \eqref{nu-verysmallparam} is equivalent to \eqref{verysmallparam} and \eqref{new_z}.
\end{remark}

As it was mentioned in Example~\ref{ex-Gl}, under condition (cf. \eqref{less_e-1})
\begin{equation}\label{LAHT}
z C_\phi < (2e)^{-1},
\end{equation}
there exists (see, e.g., \cite{FKL2007} for details) a Gibbs measure $\mu$ on $\bigl(\Ga,
\B(\Ga)\bigr)$ corresponding to the potential $\phi\geq 0$ and
activity parameter $z$. We denote the corresponding correlation
function by $k_\mu$. The measure $\mu$ is reversible (symmetrizing) for the operator defined by \eqref{genGa} (see, e.g., \cite{FKL2007,KL2005}). Therefore, for any $F\in K\Bbs$
\begin{equation}\label{invGibbs}
\int_\Ga LF(\ga)d\mu(\ga)=0.
\end{equation}

\begin{theorem}\label{thmergod}
Let \eqref{LAHT} and \eqref{nu-verysmallparam} hold and let $\a\in(\a_0;1)$,
where $\alpha_0$ is chosen as in the proof of Proposition~\ref{invariantspace}. Let
$k_0\in\Ka$, $k_t={\widehat{T}} ^{\odot\a}(t)k_0$. Then for any $t\geq0$
\begin{equation}\label{ergodineq}
\|k_t -k_\mu\|_{\K_C}\leq e^{-(1-\nu)t} \|k_0 -k_\mu\|_{\K_C}.
\end{equation}
\end{theorem}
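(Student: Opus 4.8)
The plan is to reduce the estimate \eqref{ergodineq} to the strict contractivity of the restricted semigroup $\widehat{T}^{\odot\a}(t)$ on the subspace $\K_\a^0$, and then to transport the discrete-time supercontraction of Proposition~\ref{propergod} to continuous time by the approximation theorem proved just above. The first task is to identify $k_\mu$ as a fixed point of $\widehat{T}^{\odot\a}(t)$ lying in the invariant space $\overline{\K_{\a C}}$, so that the difference $h_0:=k_0-k_\mu$ becomes the object on which the contraction acts.

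First I would show $\widehat{T}^{\odot\a}(t)k_\mu=k_\mu$. Since \eqref{nu-verysmallparam} forces $\frac{z}{C}e^{CC_\phi}<\nu<1$, which for $s=0$ is exactly the stationary bound \eqref{statior-est}, the analysis of the stationary equation applies: $k_\mu$ is the unique solution of $\widehat{L}^\ast k=0$ in $\K_{\a C}^{(1)}$, so in particular $k_\mu\in\K_{\a C}\subset\overline{\K_{\a C}}$ with $\|k_\mu\|_{\K_{\a C}}<\infty$ (consistent with the Ruelle bound \eqref{RB}). Reversibility \eqref{invGibbs} gives $\int_\Ga L(KG)\,d\mu=0$ for every $G\in\Bbs$, which by \eqref{eqmeans} and the definition \eqref{dualoper} of the dual operator reads $\langle\!\langle \widehat{L}G,k_\mu\rangle\!\rangle=0$. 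As $k_\mu\in\K_{\a C}\subset\Dom(\widehat{L}^\ast)$ by Proposition~\ref{domain-adj}, this says $\langle\!\langle G,\widehat{L}^\ast k_\mu\rangle\!\rangle=0$ for all $G$ in the dense set $\Bbs$, hence $\widehat{L}^\ast k_\mu=0$. Since $0\in\overline{\K_{\a C}}$, we obtain $k_\mu\in\Dom(\widehat{L}^{\odot\a})$ with $\widehat{L}^{\odot\a}k_\mu=0$, so $\widehat{T}^{\odot\a}(t)k_\mu=k_\mu$. By linearity $k_t-k_\mu=\widehat{T}^{\odot\a}(t)(k_0-k_\mu)=\widehat{T}^{\odot\a}(t)h_0$, and since $k_0$ and $k_\mu$ are correlation functions, $h_0(\emptyset)=1-1=0$, whence $h_0\in\K_\a^0$.

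Next I would run the limiting argument. The approximation theorem above yields $(\widehat{P}_\delta^\ast)^{[t/\delta]}h_0\to\widehat{T}^{\odot\a}(t)h_0=k_t-k_\mu$ in $\overline{\K_{\a C}}$, with respect to $\|\cdot\|_{\K_C}$, as $\delta\to0$. Because $(\widehat{P}_\delta^\ast k)(\emptyset)=k(\emptyset)$ by \eqref{apprfordual}, the operator $\widehat{P}_\delta^\ast$ maps $\K_\a^0$ into itself, so iterating the supercontraction \eqref{supercontraction} of Proposition~\ref{propergod} gives
\[
\bigl\|(\widehat{P}_\delta^\ast)^{[t/\delta]}h_0\bigr\|_{\K_C}\le\bigl(1-(1-\nu)\delta\bigr)^{[t/\delta]}\|h_0\|_{\K_C}.
\]
Letting $\delta\to0$, using continuity of the norm together with $\bigl(1-(1-\nu)\delta\bigr)^{[t/\delta]}\to e^{-(1-\nu)t}$, I obtain $\|k_t-k_\mu\|_{\K_C}\le e^{-(1-\nu)t}\|k_0-k_\mu\|_{\K_C}$, which is \eqref{ergodineq}.

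The genuinely delicate points are localized in the first two steps rather than in the limiting argument, which is routine once the pieces are assembled. The main obstacle is to secure $k_\mu\in\K_{\a C}$ for the admissible range of $\alpha$, so that $h_0$ lives in the $\widehat{T}^{\odot\a}(t)$-invariant space $\K_\a^0$, and to confirm $\widehat{L}^\ast k_\mu=0$ rigorously from reversibility; this is where the smallness conditions \eqref{LAHT}, \eqref{nu-verysmallparam} and the stationary-equation analysis are essential. The exponential decay is carried entirely by the estimate \eqref{supercontraction}, so once $k_\mu$ is installed as a fixed point in $\overline{\K_{\a C}}$ the proof closes at once.
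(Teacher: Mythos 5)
Your proof follows the paper's argument almost step for step: identify $k_\mu$ as a fixed point of $\widehat{T}^{\odot\a}(t)$ via reversibility \eqref{invGibbs} and $\widehat{L}^\ast k_\mu=0$, observe that $r_0=k_0-k_\mu$ lies in $\K_\a^0$, iterate the supercontraction \eqref{supercontraction} on the discrete approximants, and pass to the limit $\delta\to0$ using the approximation theorem together with $(1-(1-\nu)\delta)^{[t/\delta]}\to e^{-(1-\nu)t}$. The one place you diverge is the justification of $k_\mu\in\K_{\a C}$: you appeal to the uniqueness of solutions of $\widehat{L}^\ast k=0$ in $\K_{\a C}^{(1)}$ and conclude ``so in particular $k_\mu\in\K_{\a C}$,'' but uniqueness \emph{within} a space does not by itself place a given object in that space --- you would still need the separate (classical, but nontrivial) identification of the Gibbs correlation function with the Kirkwood--Salzburg fixed point $1^*+(\1-S)^{-1}E$. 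The paper avoids this detour: inequality \eqref{ineq-alpha} gives $z\le\a C e^{-\a C C_\phi}$, which yields the Ruelle-type bound $k_\mu(\eta)\le(\a C)^{|\eta|}$ directly, hence $k_\mu\in\K_{\a C}\subset\Ka\cap\Dom(\widehat{L}^\ast)$. With that substitution your argument is exactly the paper's proof; everything else, including the observation that $(\widehat{P}_\delta^\ast k)(\emptyset)=k(\emptyset)$ so that $\K_\a^0$ is preserved, is correct and matches the paper.
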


\begin{proof}
First of all, let us note that for any $\a\in(\a_0;1)$ the
inequality \eqref{ineq-alpha} implies $z\leq\a C \exp\{-\aC
C_\phi\}$. Hence $k_\mu(\eta)\leq (\aC)^{|\eta|}$, $\eta\in\Ga_0$.
Therefore, $k_\mu\in\K_\aC\subset\Ka\cap D(\widehat{L}^\ast)$. By
\eqref{invGibbs}, for any $G\in \Bbs$ we have
$\langle\!\langle\widehat{L}G, k_\mu\rangle\!\rangle=0$. It means that
$\widehat{L}^\ast k_\mu = 0$. Therefore, ${\widehat{L}}^{\odot\a}k_\mu=0$.
As a result,
 ${\widehat{T}} ^{\odot\a}(t)k_\mu=k_\mu$.
Let $r_0=k_0-k_\mu\in\Ka$. Then $r_0\in\K_a^0$ and
\begin{align*}
&\|k_t -k_\mu\|_{\K_C}= \bigl\| {\widehat{T}} ^{\odot\a}(t) r_0\bigr\|_{\K_C}
\\&\leq \Bigl\| \bigl(\widehat{P}_{\delta
}^{\ast
}\bigr)^{\left[\frac{t}{\delta}\right]} r_0\Bigr\|_{\K_C}+\Bigl\| {\widehat{T}} ^{\odot\a}(t) r_0- \bigl(\widehat{P}_{\delta
}^{\ast
}\bigr)^{\left[\frac{t}{\delta}\right]} r_0\Bigr\|_{\K_C}\\
&\leq \Bigl\| \widehat{P}_{\delta
}^{\ast
}\upharpoonright_{\K_\a^{0}}\Bigr\|^{\left[\frac{t}{\delta}\right]} \cdot \|r_0\|_{\K_C}+\Bigl\| {\widehat{T}} ^{\odot\a}(t) r_0- \bigl(\widehat{P}_{\delta
}^{\ast
}\bigr)^{\left[\frac{t}{\delta}\right]} r_0\Bigr\|_{\K_C}\\
&\leq \bigl( 1 -(1-\nu)\delta \bigr)^{\frac{t}{\delta}-1} \|r_0\|_{\K_C}+\Bigl\| {\widehat{T}} ^{\odot\a}(t) r_0- \bigl(\widehat{P}_{\delta
}^{\ast
}\bigr)^{\left[\frac{t}{\delta}\right]} r_0\Bigr\|_{\K_C},
\end{align*}
since $0<1-(1-\nu)\delta<1$ and
$\frac{t}{\delta}<\bigl[\frac{t}{\delta}\bigr]+1$. Taking the limit
as $\delta\downarrow 0$ in the right hand side of this inequality we
obtain \eqref{ergodineq}.
\end{proof}

\end{document}